\def\R{{\mathbb R}}
\def\E{{\mathbb E}}
\def\K{{\mathbb K}}
\def\bQ{{\bf Q}}
\def\wt{\widetilde}
\def\<{\langle}
\def\>{\rangle}
\def\LL{{\cal L}}
\newtheorem{thm}{Theorem}[section]
\newtheorem{lem}[thm]{Lemma}
\newtheorem{defn}[thm]{Definition}
\newtheorem{rem}[thm]{Remark}
\numberwithin{equation}{section}
\newcounter{counterConstant}
\newcommand{\const}[1]{
    \addtocounter{counterConstant}{1}
    \edef#1{\arabic{counterConstant}}
}
\begin{document}
\title{\bf Heat kernel estimates for  $\Delta+\Delta^{\alpha/2}$ under gradient perturbation}
\author{{\bf Zhen-Qing Chen\footnote{Research supported in part by NSF grant DMS-1206276 .}} \quad and \quad {\bf Eryan Hu}\footnote{Research supported in part by NFSC (11071138).}}
\date{}
\maketitle

\begin{abstract}
  For $\alpha \in (0,2)$ and $M > 0$, we consider a family of nonlocal operators $\{\Delta+a^\alpha\Delta^{\alpha/2}, a\in (0,M]\}$ on $\mathbb{R}^d$ under Kato class gradient perturbation. We establish the existence and uniqueness of their fundamental solutions, and derive their sharp two-sided estimates. The estimates give explicit dependence on $a$ and recover the sharp estimates for Brownian motion with drift as $a\to 0$. Each fundamental solution determines a conservative Feller process $X$. We characterize $X$ as the unique solution of the corresponding martingale problem as well as a L\'evy process with singular drift.
\end{abstract}

\noindent {{\bf AMS 2010 Mathematics Subject Classification:} Primary 60J35, 60H10, 35K08;   Secondary 47G20, 47D07}

\noindent{{\bf Keywords and Phrases:} heat kernel, transition density, Feller semigroup, perturbation, positivity, L\'{e}vy system,   Kato class}

\section{Introduction}

Let $B$ be a Brownian motion on $\R^d$ with $\E [(B_t-B_0)^2]=2t$,
and $Y$ be a rotationally symmetric
$\alpha$-stable process on $\R^d$ that is independent of $B$.
Here $d \ge 1$ and $\alpha \in (0,2)$.
Then $B+Y$ is a symmetric L\'evy process that has both  diffusive and jumping components. Let $b$ be a bounded
$\R^d$-valued function on $\R^d$. Using Girsanov transform, it is easy to show that for every $a>0$,
there is a strong Markov process $X^{a, b}$ on $\R^d$ so that
\begin{equation}\label{e:1.1}
 dX^{a, b}_t = dZ^a_t +b(X^{a, b}_t)dt,
\end{equation}
where $Z^a$ is a L\'evy process that has the same distribution as $B+aY$.
The goal of this paper is to study the transition density function
$p^{a, b}(t, x, y)$ of
the strong Markov process $X^{a, b}$ and its two-sided sharp estimates.

Recall that  a rotationally symmetric $\alpha$-stable process on $\R^d$ is a L\'evy process $Y$ so that
\begin{equation*}
  \mathbb{E}_x[e^{i\xi (Y_t - Y_0)}] = e^{-t|\xi|^{\alpha}}  \text{ for every } x, \xi \in \mathbb{R}^d \text{ and } t > 0.
\end{equation*}
The infinitesimal generator of $Y$ is $\Delta^{\alpha/2} := -(-\Delta)^{\alpha/2}$, which is a prototype of nonlocal operator and can be written in the form
\begin{equation}\label{equ_alphaexp}
  \Delta^{\alpha/2}f(x) = \lim_{\varepsilon \rightarrow 0}\int_{|x-y|\ge \varepsilon} \mathcal{A}(d,-\alpha)\frac{f(y) - f(x)}{|x-y|^{d+\alpha}} dy, \quad f\in C_c^2(\mathbb{R}^d).
\end{equation}
Here $\mathcal{A}(d,-\alpha) := \alpha2^{\alpha-1}\pi^{-d/2}\Gamma((d+\alpha)/2)\Gamma(1-\alpha/2)^{-1}$ is a normalizing constant, with $\Gamma(\lambda):= \int_0^\infty t^{\lambda-1}e^{-t}dt$. Using It\^o's formula, one can see that the infinitesimal generator of $X^{a,b}$ is
$$
\mathcal{L}^{a,b} = \Delta+a^\alpha\Delta^{\alpha/2} + b\cdot \nabla .
$$

In this paper we will in fact study heat kernel estimates of $X^{a, b}$ not only for bounded drift function $b$ but also for $b$ in certain Kato class $\K_{d, 1}$ which can be unbounded; see Definition \ref{D:1.1}. When $b$ is in Kato class $\mathbb{K}_{d, 1}$, one can not obtain the strong Markov process $X^{a, b}$ from $B+aY$ through Girsanov transform. So we will do it in the other way around. We first construct and establish in Theorem \ref{T:main} the uniqueness of the fundamental solution $p^{a,b}(t, x, y)$ for operator $\LL^{a, b}$, and obtain its two-sided sharp estimates in Theorem \ref{T:1.3}. The heat kernel $p^{a, b}(t, x, y)$ determines a conservative Feller process $X^{a, b}$. We then show in Theorem \ref{T:1.5} that $X^{a, b}$ satisfies \eqref{e:1.1} through establishing the well-posedness of the martingale problem for $(\LL^{a,b}, C_c^\infty (\R^d))$ in Theorem \ref{T:1.4}. Moreover, we derive sharp two-sided estimates for $p^{a, b}(t, x, y)$ in such a way that gives the explicit dependence on $a$ so that when $a\to 0$, we can recover the sharp two-sided heat kernel estimates for Brownian motion with drift obtained in Zhang \cite{Zhang.1996, Zhang.1997}.

Brownian motions with drifts, which have $\Delta + b\cdot \nabla$ as their  infinitesimal generators, have been studied by many authors under various conditions; see \cite{CranstonZhao.1987, Zhang.1996, Zhang.1997}  and the references therein, where $b$ belongs to some suitable Kato class. In \cite{BogdanJakubowski.2007}, a fundamental solution to $\Delta^{\alpha/2}+b \cdot \nabla$ on $\R^d$ with $d\geq 2$ is constructed and its two-sided estimates derived. The uniqueness of the fundamental solution, the well-posedness of the martingale problem for $( \Delta^{\alpha/2}+b \cdot \nabla, C^\infty_c (\R^d))$ and its connection to stochastic differential equations are recently settled in \cite{ChenWang.2013a}. We also mention that relativistic stable processes with drifts have recently been studied in \cite{ChenWang.2014}.

We now describe the main results of this paper in more details. The L\'evy process  $Z^a$ has  infinitesimal generator $\LL^a:=\Delta+a^\alpha\Delta^{\alpha/2}$, and  L\'{e}vy intensity kernel
\begin{equation}\label{equ_Levydensity}
  J^a(x,y) = a^\alpha\mathcal{A}(d,-\alpha) |x-y|^{-(d+\alpha)},
\end{equation}
The kernel $J^a(x,y)$ determines a L\'{e}vy system for $Z^a$, which describes the jumps of the process $Z^a$. Let $p^a(t,x,y) := p^a(t,x-y)$ be the transition density function of $Z^a$ with respect to the Lebesgue measure on $\R^d$.
Clearly,  $p^a(t, z)$ is the smooth function determined by
\begin{equation}\label{equ_Xakernel}
  \int_{\mathbb{R}^d} p^a(t, z) e^{iz\cdot \xi} dz = e^{-t(|\xi|^2+a^\alpha |\xi|^\alpha)}, \quad \xi \in \mathbb{R}^d.
\end{equation}
The following sharp  two-sided estimates on $p^a (t, z)$, as stated in \cite[Theorem 1.1]{ChenKimSong.2011}, follows directly from \cite[Theorem 1.4]{ChenKumagai.2010} (see also \cite[Theorem 2.13]{SongVondravcek.2007}) by scaling.
\const{\CTpaBndI}\const{\CTpaBndII}\!\!\!\!\!\!\!\!\!\!\!\!\!\!\!\!\!\!\!\!\!\!\!\!\!\!\!
There exist constants $C_i\ge 1$, $i=\CTpaBndI,\CTpaBndII$, so that for all $a \in (0,\infty)$ and $(t,z) \in (0,\infty) \times \mathbb{R}^d $,
\begin{equation}\label{equ_paSharpBnds}
  \begin{split}
    C_{\CTpaBndI}^{-1}& (t^{-d/2}\wedge(a^\alpha t)^{-d/\alpha}) \wedge \left(t^{-d/2}e^{-C_{\CTpaBndII}|z|^2/t}+(a^\alpha t)^{-d/\alpha}\wedge \frac{a^\alpha t}{|z|^{d+\alpha}}\right)\\
    &\le p^a(t,z ) \le C_{\CTpaBndI} (t^{-d/2}\wedge(a^\alpha t)^{-d/\alpha}) \wedge \left(t^{-d/2}e^{-|z|^2/(C_{\CTpaBndII}t)}+(a^\alpha t)^{-d/\alpha}\wedge \frac{a^\alpha t}{|z|^{d+\alpha}}\right).
  \end{split}
\end{equation}

We can view $\mathcal{L}^{a,b}$ as the perturbation of $\LL^a$ by  $b\cdot \nabla$. So intuitively, the fundamental solution $p^{a,b}(t, x, y)$ of $\LL^{a, b}$ should be related to the fundamental solution $p^a(t, x, y)$ by the following formula
\begin{equation}\label{equ_Duhamel}
  p^{a,b}(t,x,y) = p^a(t,x,y) + \int_0^t\int_{\mathbb{R}^d} p^{a,b}(t-s,x,z) b(z)\nabla_z p^a(s,z,y) dzds
\end{equation}
for $t > 0$ and $x,y\in \mathbb{R}^d$. The above relation is a folklore and is called Duhamel's formula in literature.
Just as in \cite{BogdanJakubowski.2007,Zhang.1997}, applying  (\ref{equ_Duhamel}) recursively, it is reasonable to conjecture that $\sum_{k=0}^\infty p_k^{a,b}(t,x,y)$, if convergent, is a solution of \eqref{equ_Duhamel}, where $p_0^{a,b}(t,x,y) = p^a(t,x,y)$ and
\begin{equation}\label{equ_pndef}
  p_k^{a,b}(t,x,y) = \int_0^t\int_{\mathbb{R}^d} p_{k-1}^{a,b}(t-s,x,z) b(z)\nabla_z p^a(s,z,y) dzds  \text{ for } k \ge 1.
\end{equation}

We now give the definition of Kato class  $\mathbb{K}_{d, 1}$. For a function $f=(f_1, \dots, f_k):\mathbb{R}^d \rightarrow \mathbb{R}^k$ and $d\geq 2$, define
\begin{align*}
  M_f(r) = \sup_{x \in \mathbb{R}^d} \int_{|x - y| < r} \frac{|f(y)|}{|x - y|^{d - 1}}dy
  \quad \text{ for } r > 0.
\end{align*}

\begin{defn}\label{D:1.1}
  A function $f=(f_1, \dots, f_k):\mathbb{R}^d \rightarrow \mathbb{R}^k$ is said to be in  Kato class $\mathbb{K}_{d, 1}$ if  $ \lim_{r \downarrow 0} M_f (r) = 0$ when $d\geq 2$,  and bounded if $d=1$.
\end{defn}

It is easy to see that any bounded function is in Kato class $\mathbb{K}_{d,1}$ and, for $d\geq 2$, $L^p(\R^d)\subset \mathbb{K}_{d,1}$ for any $p>d$ by H\"older inequality. On the other hand, any function in $\mathbb{K}_{d,1}$ is locally integrable on $\R^d$.

For an integer $k \geq 1$, let $C_c^{k}(\mathbb{R}^d)$ denote the space of all continuous functions on $\mathbb{R}^d$ with compact supports that have continuous derivatives up to and including $k$th-order, and set $C_c^\infty(\mathbb{R}^d) = \cap_{k=1}^\infty C_c^k(\mathbb{R}^d)$. Denote by $C_\infty(\mathbb{R}^d)$ the space of continuous functions on $\mathbb{R}^d$ vanishing at the infinity, equipped with supremum norm. The following are the first two main results of this paper.

\begin{thm}\label{T:main}
  Suppose that $M > 0$ and $b=(b_1, \dots, b_d) \in \mathbb{K}_{d,1}$. For every $a\in (0,M]$, there is a unique positive jointly continuous function $p^{a,b}(t,x,y)$ on $(0,\infty)\times \mathbb{R}^d \times \mathbb{R}^d$ that satisfies (\ref{equ_Duhamel}) with $p^{a,b}(t,x,y) \le c_1 p^a(t,x,y)$ both on $(0,t_0]\times \mathbb{R}^d \times \mathbb{R}^d$ for some constants $c_1,t_0>0$, and that
  \begin{equation}\label{e:1.8}
    p^{a,b}(t+s,x,y) = \int_{\mathbb{R}^d} p^{a,b}(t,x,z) p^{a,b}(s,z,y) dz  \quad \text{ for } t, s > 0, x,y \in \mathbb{R}^d.
  \end{equation}
  Moreover, the following hold.
  \begin{enumerate}
    \item[\rm (i)] There is a constant $t_*=t_*(d,\alpha,M,b) > 0$, depending on $b$ only via the rate at which $M_b(r)$ goes to zero, such that
        \begin{equation}\label{e:1.9}
          p^{a,b}(t,x,y) = \sum_{k=0}^\infty p_k^{a,b}(t,x,y) \quad \text{ on } (0,t_*]\times \mathbb{R}^d \times \mathbb{R}^d,
        \end{equation}
        where $p_k^{a,b}(t,x,y)$ is defined by (\ref{equ_pndef}).

    \item[\rm (ii)] $p^{a,b}(t,x,y)$ satisfies (\ref{equ_Duhamel}) on $(0,\infty)\times \mathbb{R}^d \times \mathbb{R}^d$.

    \item[\rm (iii)] (Conservativeness)
       $\int_{\mathbb{R}^d} p^{a,b}(t,x,y) dy = 1$  for  every $t > 0$ and $ x \in \mathbb{R}^d$.

    \item[\rm (iv)] for every $f\in C_c^{\infty}(\mathbb{R}^d)$ and $g \in C_\infty(\mathbb{R}^d)$,
        \begin{equation}\label{equ_generator}
          \lim_{t\rightarrow 0}\int_{\mathbb{R}^d} \frac{P^{a,b}_tf(x) -  f(x)}{t} g(x) dx = \int_{\mathbb{R}^d} \mathcal{L}^{a,b}f(x) g(x) dx,
        \end{equation}
        where $P^{a,b}_tf(x) = \int_{\mathbb{R}^d}p^{a,b}(t,x,y) f(y) dy$.
  \end{enumerate}
\end{thm}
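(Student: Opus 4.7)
The plan is to construct $p^{a,b}$ as the Duhamel series (\ref{e:1.9}), establish its properties on a small interval $(0, t_*]$ using a three-$p$ estimate, and extend to $(0, \infty)$ by iterating the semigroup identity (\ref{e:1.8}). The cornerstone analytic ingredient is a pointwise gradient bound on $p^a$, uniform in $a \in (0, M]$, of the rough form
\begin{equation*}
  |\nabla_z p^a(s, z, y)| \le C\, (s^{-1/2} + |z - y|^{-1})\, p^a(s, z/c, y/c),
\end{equation*}
that interpolates correctly between the Gaussian and stable regimes of (\ref{equ_paSharpBnds}). Combined with the hypothesis $M_b(r) \to 0$, this yields a three-$p$ inequality
\begin{equation*}
  \int_0^t \int_{\R^d} p^a(t-s, x, z)\, |b(z)|\, |\nabla_z p^a(s, z, y)|\, dz\, ds \le \eta_b(t)\, p^a(t, x, y),
\end{equation*}
with $\eta_b(t) \downarrow 0$ as $t \downarrow 0$ and $\eta_b$ depending on $b$ only through $r \mapsto M_b(r)$. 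The $|z-y|^{-1}$ singularity from the gradient is exactly matched by the $|\cdot|^{d-1}$ weight in the definition of $\mathbb{K}_{d, 1}$. Proving the gradient and 3-$p$ bounds with the correct $a$-uniform scaling across the Gaussian/stable crossover is the main technical obstacle.

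Given the 3-$p$ bound, induction on $k$ yields $p_k^{a,b}(t, x, y) \le \eta_b(t)^k\, p^a(t, x, y)$; choosing $t_* = t_*(d, \alpha, M, b)$ with $\eta_b(t_*) < 1/2$ makes the series (\ref{e:1.9}) converge absolutely and uniformly on compact subsets of $(0, t_*] \times \R^d \times \R^d$, with $p^{a, b} \le c_1 p^a$. Joint continuity and positivity follow from the uniform convergence and from $p^{a, b}/p^a \ge 1 - \eta_b(t) > 0$; substitution into (\ref{equ_pndef}) produces the Duhamel identity on $(0, t_*]$. Uniqueness on this short interval is immediate, since any two solutions dominated by multiples of $p^a$ satisfy $\sup |p^{a,b} - \tilde p^{a,b}|/p^a \le \eta_b(t_*)\, \sup |p^{a,b} - \tilde p^{a,b}|/p^a$. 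The semigroup identity (\ref{e:1.8}) on $(0, t_*]$ is checked by expanding each side into iterated integrals in the spirit of \cite{BogdanJakubowski.2007, Zhang.1997} and matching terms combinatorially. Extending $p^{a, b}$ to all $t > 0$ via (\ref{e:1.8}) propagates continuity, positivity, the Duhamel identity (ii), and uniqueness in the standard way.

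For (iii), differentiating $\int p^a(s, z, y)\, dy \equiv 1$ in $z$ gives $\int \nabla_z p^a(s, z, y)\, dy = 0$, so by induction $\int p_k^{a, b}(t, x, y)\, dy = 0$ for every $k \ge 1$, leaving $\int p^{a, b}(t, x, y)\, dy = 1$ on $(0, t_*]$; Chapman--Kolmogorov then propagates conservativeness to all $t > 0$. For (iv), rewrite Duhamel as
\begin{equation*}
  P_t^{a, b} f(x) - f(x) = (P_t^a f(x) - f(x)) + \int_0^t \int_{\R^d} p^{a, b}(t-s, x, z)\, b(z) \cdot \nabla P_s^a f(z)\, dz\, ds,
\end{equation*}
divide by $t$ and integrate against $g$. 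The first summand contributes $\int g\cdot \mathcal{L}^a f\, dx$ in the limit by the Feller semigroup property of $Z^a$; in the second, $p^{a, b}(t-s, x, \cdot)$ acts as an approximate identity as $s \uparrow t$, $t \downarrow 0$ while $\nabla P_s^a f \to \nabla f$ uniformly on compact sets, yielding $\int g(z)\, b(z) \cdot \nabla f(z)\, dz$. The two limits sum to $\int g\cdot \mathcal{L}^{a, b} f\, dx$, completing (iv).
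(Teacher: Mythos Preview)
Your overall architecture matches the paper's: build the series on a short interval via a 3-$p$ inequality, verify Chapman--Kolmogorov term by term, extend by iteration, and read off (iii) and (iv). There is, however, a genuine gap in your positivity argument.

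You claim that positivity follows from $p^{a,b}/p^a \ge 1 - \eta_b(t)$, i.e.\ from $\sum_{k\ge 1}|p_k^{a,b}|\le \eta_b(t)\,p^a$. For this you would need the 3-$p$ bound to return $p^a$ itself on the right-hand side. It does not. Already in the pure Gaussian case (Zhang \cite{Zhang.1997}) the 3-$p$ estimate unavoidably degrades the Gaussian exponent: starting from $g_{d,\beta_2}$ you land on $g_{d,\beta_1}$ with $\beta_1<\beta_2$. Here the same happens: the paper's Lemma \ref{thm_ace} outputs $q^a_{d,\beta_1}$ with $\beta_1$ strictly smaller than the exponent in the lower bound of $p^a$, so the induction gives only $|p_k^{a,b}|\lesssim \eta_b(t)^k\,q^a_{d,\beta_1}$. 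In the Gaussian tail regime $|x-y|^2\gg t$ the ratio $q^a_{d,\beta_1}/p^a$ blows up like $\exp(c|x-y|^2/t)$, so the series estimate yields positivity only near the diagonal $|x-y|^2\le t$ (this is exactly the content of \eqref{equ_pablow}), not globally. Your contraction argument for short-time uniqueness has the same defect: the sup of $|p^{a,b}-\tilde p^{a,b}|/p^a$ need not be finite after one application of Duhamel.

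The paper addresses both issues differently. Uniqueness (Theorem \ref{thm_hkUnique}) is proved not by contraction on a ratio but by iterating Duhamel $n$ times to write any admissible $\bar p$ as $\sum_{j<n}p^{a,b}_j+R_n$ with $|R_n|\lesssim 2^{-n}q^a_{d,\beta_1}$, hence $\bar p=\sum p^{a,b}_k$. Positivity (Lemmas \ref{thm_pab'pos}--\ref{thm_pabpositive}) is obtained by an entirely separate route: for bounded continuous $b$ one checks $C_c^2\subset D(\widehat{\mathcal L}^{a,b})$, invokes the positive maximum principle and Hille--Yosida--Ray to conclude that $e^{-Ct}P_t^{a,b}$ is positivity preserving; for general $b\in\mathbb K_{d,1}$ one mollifies $b$ to $b_n$, shows $M_{b_n}\le M_b$, and proves $p^{a,b_n}\to p^{a,b}$ locally uniformly. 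The paper itself flags that the direct series argument for positivity, which works in \cite{BogdanJakubowski.2007} because stable kernels have no exponential tail mismatch, does not extend here.
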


\medskip
Here and after, the meaning of the phrase ``depending on $b$ only via the rate at which $M_b(r)$ goes to zero" is that the statement is true for any  $\mathbb{R}^d$-valued function $\tilde{b}$ on $\mathbb{R}^d$ with $M_{\tilde{b}}(r) \le M_b (r)$ for all $r > 0$. In this paper, we use $:=$ as a way of definition. For $a, b\in \R$,  $a\wedge b:= \min\{a, b\}$ and $a \vee b:= \max\{a , b\}$.  For constants $a, \beta> 0$, we define
\const{\CTpabLc}
\const{\CTpabLexp}
\const{\CTpabUc}
\const{\CTpabUexp}
\begin{equation}\label{e:1.11}
  q_{d,\beta}^a(t, z) = t^{-d/2}\exp\left(-\frac{\beta|z|^2}{t}\right) + t^{-d/2}\wedge \frac{a^\alpha t}{|z|^{d+\alpha}}
  \quad \text{ for } t> 0, \  z\in \mathbb{R}^d.
\end{equation}

\begin{thm}\label{T:1.3}
  For every $M > 0$ and $T> 0$, there are constants $C_i = C_i(d,\alpha,M),i=\CTpabLexp,\CTpabUexp$ and $C_j = C_j(d,\alpha,M,T,b),j=\CTpabLc,\CTpabUc$ depending on $b$ only via the rate at which $M_b(r)$ goes to zero, such that for all $a\in (0,M]$ and $(t,x,y) \in (0,T]\times \mathbb{R}^d\times \mathbb{R}^d$,
  \begin{equation}\label{e:1.12}
    C_{\CTpabLc} q_{d,C_{\CTpabLexp}}^a(t,x-y) \le p^{a,b}(t,x,y) \le C_{\CTpabUc} q_{d,C_{\CTpabUexp}}^a(t,x-y).
  \end{equation}
\end{thm}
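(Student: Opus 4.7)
The plan is to establish \eqref{e:1.12} first on a short time interval $(0,t_*]$ using the series representation (1.9) from Theorem~\ref{T:main}, and then to propagate it to $(0,T]$ via finite iteration of the Chapman--Kolmogorov identity \eqref{e:1.8}. A preliminary ingredient we would need is an $a$-uniform gradient estimate of the form
$$|\nabla_z p^a(s,z)| \le C\, s^{-1/2}\, q_{d,\beta_0}^a(s,z), \qquad (s,z) \in (0,\infty)\times \R^d,$$
with constants depending only on $(d,\alpha,M)$ for $a\in(0,M]$. This would follow by differentiating the Fourier representation \eqref{equ_Xakernel}, or equivalently from the independent decomposition $Z^a = B + aY$ together with the classical gradient bounds for $e^{t\Delta}$ and for the isotropic $\alpha$-stable kernel; the factor $s^{-1/2}$ reflects the dominant diffusive smoothing.

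For the upper bound on $(0,t_*]$, we would induct on $k\ge 0$ to show
$$p_k^{a,b}(t,x,y)\le C_0\,(C_1\,\omega_b(t))^k\, q_{d,\beta_1}^a(t,x-y), \qquad t\in (0,t_*],$$
for some $\beta_1<\beta_0$ and a modulus $\omega_b(t)\to 0$ as $t\to 0$ depending on $b$ only through $M_b$. The inductive step reduces, via \eqref{equ_pndef} and the gradient estimate, to a 3P-type convolution bound
$$\int_0^t\!\!\int_{\R^d} q_{d,\beta_1}^a(t-s,x-z)\,|b(z)|\,s^{-1/2}\,q_{d,\beta_0}^a(s,z-y)\,dz\,ds \le C_1\,\omega_b(t)\, q_{d,\beta_1}^a(t,x-y),$$
which we would prove by splitting the Gaussian and stable pieces of $q_{d,\beta}^a$ in \eqref{e:1.11}, integrating $s$ separately over $[0,t/2]$ and $[t/2,t]$, replacing one factor by its supremum in $z$, and invoking the Kato property $M_b(r)\to 0$ to produce the modulus. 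Choosing $t_*$ so that $C_1\omega_b(t_*)<1/2$ makes the geometric series converge and yields the upper half of \eqref{e:1.12} on $(0,t_*]$. For the matching lower bound on $(0,t_*]$, the same series gives $p^{a,b}(t,x,y)\ge \tfrac12 p^a(t,x,y)$, which via \eqref{equ_paSharpBnds} already produces the Gaussian piece and covers the diagonal regime; the long-jump contribution $a^\alpha t/|x-y|^{d+\alpha}$ would be extracted in the regime $|x-y|\gtrsim t^{1/\alpha}/a$ by a one-step chaining argument that exploits conservativeness (iii), the upper bound just proved, and the L\'evy system of $Z^a$ (whose intensity $J^a\asymp a^\alpha|\cdot|^{-d-\alpha}$ is $a$-explicit and unperturbed by the drift $b$).

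To pass from $(0,t_*]$ to $(0,T]$, write any $t\in(t_*,T]$ as a sum of at most $\lceil T/t_*\rceil$ subintervals of length at most $t_*$, apply \eqref{e:1.8} iteratively, and bound each factor by the short-time estimates; standard convolution identities for $q_{d,\beta}^a$ then close the argument with constants of the asserted dependence on $(d,\alpha,M)$ (for the exponents $\beta$) and $(d,\alpha,M,T,b)$ (for the prefactors). The main obstacles I expect are (a) the 3P-type convolution estimate, where the Gaussian and $\alpha$-stable scales inside $q_{d,\beta}^a$ must be handled jointly and uniformly in $a\in(0,M]$ while still producing a modulus $\omega_b(t)\to 0$ governed by $M_b$; and (b) the off-diagonal lower bound in the jumping regime, since one has to verify that the drift perturbation does not destroy the jump contribution of $Z^a$. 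Both points are handled by carefully bookkeeping the $a$-dependence in each scale and, for (b), leaning on the L\'evy-system characterization of $X^{a,b}$ together with the sharp off-diagonal upper bound.
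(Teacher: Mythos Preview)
Your upper-bound strategy is essentially the paper's: the inductive bound on $p_k^{a,b}$ via a 3P-type inequality (the paper's Lemma~\ref{thm_ace}) and a geometric sum on $(0,t_*]$, then Chapman--Kolmogorov to reach $(0,T]$. Two technical remarks. First, the paper writes the gradient estimate as $|\nabla p^a(s,\cdot)|\lesssim q^a_{d+1,\beta}(s,\cdot)$ rather than $s^{-1/2}q^a_{d,\beta}(s,\cdot)$; these agree on the Gaussian piece but the $q^a_{d+1}$ form is genuinely sharper on the stable tail, and it is this sharper form that makes the 3P estimate close with a clean $M_b(\sqrt t)$ modulus. Second, your proposed proof of the 3P bound by ``replacing one factor by its supremum in $z$'' would kill the off-diagonal decay and produce only $t^{-d/2}\cdot\omega_b(t)$, which does not close the induction; a true 3P argument that preserves the $q^a_{d,\beta}$ profile (as in Lemma~\ref{thm_ace}) is needed. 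For the propagation step, $q^a_{d,\beta}$ has no convolution identity; the paper instead sandwiches $q^a_{d,\beta}$ by $p^a$ at a rescaled time (which \emph{does} satisfy Chapman--Kolmogorov) and iterates.

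There is, however, a genuine gap in your lower bound. The claim ``the same series gives $p^{a,b}\ge\tfrac12 p^a$'' is \emph{not} justified off the diagonal. The tail $\sum_{k\ge 1}|p_k^{a,b}|$ is controlled by $C\,M_b(\sqrt t)\,q^a_{d,\beta_1}(t,x-y)$ with an exponent $\beta_1$ that is \emph{smaller} (hence a slower Gaussian decay) than the exponent in the lower bound for $p^a$. In the regime $|x-y|\gg\sqrt t$ the error term therefore dominates $p^a$ no matter how small $t_*$ is, and the inequality $p^{a,b}\ge\tfrac12 p^a$ fails. The series argument only yields $p^{a,b}(t,x,y)\gtrsim t^{-d/2}$ on the near-diagonal set $\{|x-y|^2<t\}$ (this is the paper's \eqref{equ_pablow}). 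From there one must \emph{separately} recover the off-diagonal Gaussian lower bound by a chaining argument (Lemma~\ref{thm_pablowexp}): subdivide $[x,y]$ into $k\asymp |x-y|^2/t$ pieces so that consecutive points are within distance $\sqrt{t/k}$, and iterate the near-diagonal bound via \eqref{e:1.8}. Your L\'evy-system idea for the polynomial (jump) piece is correct and matches Lemma~\ref{thm_pablowpoly}, but it does not, by itself, supply the Gaussian tail; both mechanisms are needed.
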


 The heat kernel upper bound estimate of $p^{a,b}\! (t,x,y)$ is obtained by estimating each $p^{a,b}_k\! (t, x, y)$ in \eqref{e:1.9}.  It relies on a key estimate obtained in Lemma \ref{thm_ace}, which can be regarded as an analogy of the so called 3P estimate in \cite[Lemma 3.1]{Zhang.1997} and \cite[Lemma 13]{BogdanJakubowski.2007}. However, unlike the case in \cite{Zhang.1997} where there is only Gaussian term coming from Brownian motion and the case in \cite{BogdanJakubowski.2007} where there is only polynomial term coming from symmetric stable process, there are many new difficulties to overcome as we have to deal with a mixture of them. It seems to be difficult to establish the positivity of $p^{a,b}(t, x, y)$ directly from the estimates of $p^{a,b}_k(t,x, y)$ as was done in \cite{BogdanJakubowski.2007} for the symmetric stable process case. Following \cite{ChenWang.2012}, we derive the positivity of $p^{a,b}(t, x, y)$ by using the Hille-Yosida-Ray theorem when $b$ is bounded and continuous. For general $b$ in Kato class $\mathbb{K}_{d, 1}$, we approximate $b$ by a sequence of smooth $b_n$. For the lower bound of $p^{a,b}(t,x,y)$ in Theorem \ref{T:1.3}, we identify and use the L\'{e}vy system of the Feller process $\{X^{a,b}_t,t\ge 0, \mathbb{P}^{a,b}_x, x\in\mathbb{R}^d\}$ associated with $\{P^{a,b}_t, t\ge 0\}$ to get the polynomial part  (see Lemma \ref{thm_pablowpoly}), and use a chaining argument   to get the Gaussian part (see Lemma \ref{thm_pablowexp}).

 Let $\mathbb{D}([0,\infty),\mathbb{R}^d)$ be the space of right continuous $\R^d$-valued functions on $[0, \infty)$ having left limits equipped with Skorokhod topology, and let $X_t$ be the coordinate map on $\mathbb{D}([0,\infty),\mathbb{R}^d)$. A probability measure $\bQ$ on $\mathbb{D}([0,\infty),\mathbb{R}^d)$ is said to be a solution to the martingale problem for $(\mathcal{L}^{a,b},C^\infty_c(\mathbb{R}^d))$ with initial value $x\in\mathbb{R}^d$ if $\bQ (X_0 = x) = 1$ and for every $f \in C^\infty_c(\mathbb{R}^d)$ and $t > 0$, $\int_0^t | \mathcal{L}^{a,b} f(X_s)| ds < \infty$ $\bQ$-a.s. and
 \begin{equation*}
   M^f_t := f(X_t) - f(X_0) - \int_0^t \mathcal{L}^{a,b} f(X_s) ds
 \end{equation*}
 is a $\bQ$-martingale. The martingale problem for $(\mathcal{L}^{a,b},C^\infty_c(\mathbb{R}^d))$ with initial value $x\in \mathbb{R}^d$ is said to be well-posed if it has a unique solution.

\begin{thm}\label{T:1.4}
  The martingale problem for $(\mathcal{L}^{a,b},C^\infty_c(\mathbb{R}^d))$ is well-posed for every initial value $x\in \R^d$. These martingale problem solutions $\{\mathbb{P}_x, x \in \mathbb{R}^d\}$ form a strong Markov process $X$, which has $p^{a,b}(t,x,y)$ of Theorem \ref{T:main} as its  transition density function  with respect to the Lebesgue measure on $\mathbb{R}^d$.
\end{thm}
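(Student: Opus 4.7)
The approach is to build a solution from the conservative Feller semigroup produced by Theorem \ref{T:main}, then establish uniqueness by identifying the resolvent along the range $(\lambda - \mathcal{L}^{a,b})(C_c^\infty(\mathbb{R}^d))$, which will be made dense in $C_\infty(\mathbb{R}^d)$ via a smooth-drift approximation.

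For existence and the strong Markov property, the joint continuity of $p^{a,b}$ and the upper bounds in Theorem \ref{T:1.3} make $\{P_t^{a,b}\}$ map $C_\infty(\mathbb{R}^d)$ into itself; Theorem \ref{T:main}(iii) gives conservativeness; strong continuity at $t=0$ follows from \eqref{equ_generator} together with the continuity in $t$ of $P_t^{a,b} f(x)$ for $f\in C_c^\infty(\mathbb{R}^d)$. Hence $\{P_t^{a,b}\}$ is a conservative Feller semigroup; let $\mathbb{P}_x$ be its canonical c\`adl\`ag realization on $\mathbb{D}([0,\infty),\mathbb{R}^d)$, whose strong Markov property is automatic. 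To see that $\mathbb{P}_x$ solves the martingale problem, I would first upgrade \eqref{equ_generator} to the pointwise semigroup identity
\begin{equation*}
 P_t^{a,b} f(x) - f(x) = \int_0^t P_s^{a,b} \mathcal{L}^{a,b} f(x)\, ds, \qquad f\in C_c^\infty(\mathbb{R}^d),
\end{equation*}
using the semigroup property and the $s$-continuity of $P_s^{a,b}\mathcal{L}^{a,b} f(x)$, the latter being ensured by Theorem \ref{T:1.3} applied to the Kato-class term $b\cdot\nabla f$. The Markov property then yields both the martingale property of $M_t^f$ and the integrability $\int_0^t |\mathcal{L}^{a,b} f(X_s)|\,ds < \infty$ $\mathbb{P}_x$-a.s.

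For uniqueness, let $\bQ$ be any solution with $\bQ(X_0=x)=1$. Taking expectations in the martingale identity for $f\in C_c^\infty(\mathbb{R}^d)$ and Laplace transforming in $t$ yields, for each $\lambda>0$,
\begin{equation*}
 U_\lambda (\lambda-\mathcal{L}^{a,b})f = f, \qquad U_\lambda g(x) := \E^{\bQ}\!\Big[\int_0^\infty e^{-\lambda t} g(X_t)\, dt\Big].
\end{equation*}
The Feller resolvent $R_\lambda^{a,b} := \int_0^\infty e^{-\lambda t} P_t^{a,b}\, dt$ satisfies the same identity, so $U_\lambda \equiv R_\lambda^{a,b}$ on $(\lambda-\mathcal{L}^{a,b})(C_c^\infty(\mathbb{R}^d))$. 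Since both are bounded on $C_\infty(\mathbb{R}^d)$, density of this range in $C_\infty(\mathbb{R}^d)$ forces $U_\lambda \equiv R_\lambda^{a,b}$ everywhere, which identifies the one-dimensional marginals of $\bQ$ with those of $\mathbb{P}_x$. Equality of all finite-dimensional marginals then follows by the standard conditioning argument: regular conditional probabilities $\bQ(\,\cdot\mid\mathcal{F}_s)\circ\theta_s$ are again martingale-problem solutions started from $X_s$, so one-dimensional uniqueness bootstraps.

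The main obstacle is the density of $(\lambda-\mathcal{L}^{a,b})(C_c^\infty(\mathbb{R}^d))$ in $C_\infty(\mathbb{R}^d)$: $\Delta^{\alpha/2}$ does not preserve compact supports, and Kato-class $b$ is not amenable to a direct supremum-norm perturbation bound. My plan is to approximate $b$ by smooth bounded $b_n$ with $M_{b_n}(r)\le C M_b(r)$ (truncation plus mollification), for which the corresponding Feller process arises as a Girsanov transform of $Z^a$ and density of the range is standard, and then to pass to the limit $n\to\infty$ using the uniform heat-kernel bounds of Theorem \ref{T:1.3} together with the uniform Kato control to transfer density to $b$ itself.
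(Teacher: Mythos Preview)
Your existence argument is essentially the paper's: the Feller process associated with $\{P_t^{a,b}\}$ solves the martingale problem via the identity $P_t^{a,b}f-f=\int_0^t P_s^{a,b}\mathcal{L}^{a,b}f\,ds$, which the paper records as Lemma~\ref{thm_martingale1} (using Lemma~\ref{thm_mar_P} to control the Kato piece $b\cdot\nabla f$).

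The gap is in uniqueness. Your scheme hinges on density of $(\lambda-\mathcal{L}^{a,b})(C_c^\infty(\mathbb{R}^d))$ in $C_\infty(\mathbb{R}^d)$, but for general $b\in\mathbb{K}_{d,1}$ the function $b\cdot\nabla f$ need not even be continuous, so this range is not a subset of $C_\infty(\mathbb{R}^d)$ and the phrase ``density in $C_\infty$'' has no obvious meaning. The $b_n$ approximation does not rescue this: to pass from $(\lambda-\mathcal{L}^{a,b_n})f$ to $(\lambda-\mathcal{L}^{a,b})f$ in any norm where $U_\lambda-R_\lambda^{a,b}$ is continuous you would need $\|(b-b_n)\cdot\nabla f\|_\infty\to 0$, which fails since $b-b_n$ is in general unbounded. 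Nor can you transfer uniqueness itself through the limit, because a solution $\bQ$ to the martingale problem for $\mathcal{L}^{a,b}$ is not a solution for $\mathcal{L}^{a,b_n}$.

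The paper avoids the range--density question entirely. Instead of inverting $\lambda-\mathcal{L}^{a,b}$ on $C_c^\infty$, it works with the \emph{unperturbed} resolvent $U^a_\lambda$ and proves the contraction estimate $\|\nabla U^a_\lambda(bf)\|_\infty\le\tfrac12\|f\|_\infty$ for $\lambda$ large (Lemma~\ref{thm_nablaUa}, based on the pointwise gradient bound for $u^a_\lambda$ in Lemma~\ref{thm_uaUp}). Applying It\^o's formula under an arbitrary solution $\bQ$ to test functions of the form $U^a_\lambda f$ (which lie in $C_\infty^\infty$ by Lemma~\ref{thm_UaBound}) yields a resolvent equation that, thanks to the contraction bound, has a unique solution; this identifies the $\bQ$-resolvent with $R_\lambda^{a,b}$ without any core or density statement for $\mathcal{L}^{a,b}$. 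The argument is carried out by reference to \cite[Theorem~2.3]{ChenWang.2013a}. If you want to repair your approach, the natural fix is exactly this: replace ``density of the range of $\lambda-\mathcal{L}^{a,b}$'' by the perturbative resolvent identity with $U^a_\lambda$ as the reference.
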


We now connect the strong Markov process in Theorem \ref{T:1.4} to solution of SDE \eqref{e:1.1}.

\begin{thm}\label{T:1.5}
  For each $x \in \mathbb{R}^d$, SDE \eqref{e:1.1} has a unique weak solution
  with initial value $x$. Moreover, weak solutions with different starting points can be constructed on $\mathbb{D}([0,\infty),\mathbb{R}^d)$, and the process $Z^a$ in \eqref{e:1.1} can be chosen in such a way that it is the same for all starting point $x \in \mathbb{R}^d$. The law of the weak solution to \eqref{e:1.1} is the unique solution to the martingale problem for $(\mathcal{L}^{a,b},C^\infty_c(\mathbb{R}^d))$.
\end{thm}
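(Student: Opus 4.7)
The plan is to deduce Theorem~\ref{T:1.5} from Theorem~\ref{T:1.4} by moving between weak solutions of \eqref{e:1.1} and martingale problem solutions for $(\mathcal{L}^{a,b}, C_c^\infty(\mathbb{R}^d))$, exploiting that $\mathcal{L}^{a,b} = \Delta + a^\alpha \Delta^{\alpha/2} + b\cdot\nabla$ is the generator of $Z^a$ plus the first-order drift $b\cdot\nabla$.

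For weak uniqueness and the identification of the law, let $(\tilde X, \tilde Z^a)$ be any weak solution of \eqref{e:1.1} with $\tilde X_0 = x$. Since $\tilde Z^a$ has the same distribution as $B + aY$, its semimartingale characteristics are $(0, 2I, \nu^a)$ with $\nu^a(dz) = a^\alpha \mathcal{A}(d,-\alpha)|z|^{-(d+\alpha)}dz$, and $\tilde X$ inherits these together with the finite-variation drift $\int_0^\cdot b(\tilde X_s)\,ds$. Applying It\^o's formula with jumps to $f(\tilde X_t)$ for $f \in C_c^\infty(\mathbb{R}^d)$, the continuous quadratic variation contributes $\int_0^t \Delta f(\tilde X_s)\,ds$, the finite-variation drift contributes $\int_0^t b(\tilde X_s)\cdot\nabla f(\tilde X_s)\,ds$, and the compensated jump part contributes $\int_0^t a^\alpha \Delta^{\alpha/2} f(\tilde X_s)\,ds$, so that
$$M^f_t = f(\tilde X_t) - f(x) - \int_0^t \mathcal{L}^{a,b} f(\tilde X_s)\,ds$$
is a martingale. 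Hence the law of $\tilde X$ on $\mathbb{D}([0,\infty),\mathbb{R}^d)$ solves the martingale problem and, by Theorem~\ref{T:1.4}, equals $\mathbb{P}_x$. This simultaneously yields weak uniqueness and the last assertion of Theorem~\ref{T:1.5}.

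For existence on the canonical space with a common driving process, work on $\Omega = \mathbb{D}([0,\infty),\mathbb{R}^d)$ under the measures $\{\mathbb{P}_x\}_{x\in\mathbb{R}^d}$ given by Theorem~\ref{T:1.4}, and define the pathwise functional
$$Z^a_t(\omega) := X_t(\omega) - X_0(\omega) - \int_0^t b(X_s(\omega))\,ds,$$
which depends only on the path and is therefore the \emph{same} functional of $X$ under every $\mathbb{P}_x$. Finiteness of $\mathbb{E}_x\!\int_0^t |b(X_s)|\,ds$ is the standard Kato-class computation against the upper bound of Theorem~\ref{T:1.3}. The remaining point is to show that under each $\mathbb{P}_x$ the process $Z^a$ is a L\'evy process with the law of $B + aY$, which reduces to verifying
$$\mathbb{E}_x\!\left[e^{i\xi\cdot(Z^a_{t+s}-Z^a_t)}\,\bigm|\,\mathcal{F}_t\right] = \exp\!\bigl(-s(|\xi|^2 + a^\alpha |\xi|^\alpha)\bigr), \qquad \xi\in\mathbb{R}^d,\ s,t \ge 0.$$
I would obtain this by approximating $e^{i\xi\cdot y}$ by $\phi_n(y)e^{i\xi\cdot(y-X_t)}$ for smooth cutoffs $\phi_n \uparrow 1$, applying the martingale identity \eqref{equ_generator} from the martingale problem to these $C_c^\infty$ test functions on $[t, t+s]$, and passing to the limit via dominated convergence, controlling $y$-tails by the sharp upper bound in Theorem~\ref{T:1.3}. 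This exhibits $(X, Z^a)$ on $(\Omega, \mathbb{P}_x)$ as a weak solution to \eqref{e:1.1} with the same $Z^a$-functional across all starting points.

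The main obstacle is this last limiting argument: the cutoff $\phi_n$ produces unwanted nonlocal cross terms through the action of $a^\alpha\Delta^{\alpha/2}$ on $\phi_n e^{i\xi\cdot y}$, coming from jumps from $\{\phi_n = 1\}$ to $\{\phi_n < 1\}$. These must be shown to vanish as $n\to\infty$ uniformly in $a\in(0,M]$, and the integrability of $\int_0^t |b(X_s)|\,ds$ together with the polynomial-plus-Gaussian decay of $p^{a,b}$ from Theorem~\ref{T:1.3} is exactly what is needed to tame them. Once the L\'evy characterization of $Z^a$ is in hand, weak existence, the common driving process, and coincidence of laws with the martingale problem solutions all follow.
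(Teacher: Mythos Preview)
Your uniqueness argument is exactly the paper's: any weak solution of \eqref{e:1.1} solves the $(\mathcal{L}^{a,b},C_c^\infty(\mathbb{R}^d))$-martingale problem by It\^o's formula, and Theorem~\ref{T:1.4} forces its law to be $\mathbb{P}_x$.

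For existence, the paper takes a different and more direct route. Working on the canonical space under $\mathbb{P}_x$, it uses the martingale problem (Lemma~\ref{thm_martingale1}) together with the L\'evy system identification (Theorem~\ref{thm_levysystem}) to read off the semimartingale characteristics of $X^{a,b}$: comparing $\int_0^t \mathcal{L}^{a,b}f(X_s)\,ds$ with the canonical decomposition shows that the bounded-variation part is $A_t=\int_0^t b(X^{a,b}_s)\,ds$, the continuous martingale part $M$ satisfies $\langle M^i,M^j\rangle_t=2\delta_{ij}t$, and the jump compensator is $J^a(X_s,y)\,dy\,ds$. Since the resulting characteristics $(0,\,2I,\,\nu^a)$ of $Z^a_t:=X^{a,b}_t-X^{a,b}_0-\int_0^t b(X^{a,b}_s)\,ds$ are deterministic, $Z^a$ is a L\'evy process with the law of $B+aY$, and the functional definition of $Z^a$ is the same under every $\mathbb{P}_x$.

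Your proposed characteristic-function route has a gap, and it is not the cutoff cross terms. Even granting that the martingale problem extends from $C_c^\infty$ to $y\mapsto e^{i\xi\cdot y}$, what you obtain is that
\[
e^{i\xi\cdot X_{t+s}}-e^{i\xi\cdot X_t}+\int_t^{t+s}\bigl(\psi(\xi)-i\xi\cdot b(X_r)\bigr)e^{i\xi\cdot X_r}\,dr
\]
is a martingale increment, with $\psi(\xi)=|\xi|^2+a^\alpha|\xi|^\alpha$. This is a statement about $X$, not about $Z^a$: the quantity $e^{i\xi\cdot(Z^a_{t+s}-Z^a_t)}$ involves $e^{-i\xi\cdot\int_t^{t+s}b(X_r)\,dr}$, which is a path functional and cannot be reached by testing against functions of $X_r$ alone. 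To pass from the display above to $\mathbb{E}_x[e^{i\xi\cdot(Z^a_{t+s}-Z^a_t)}\mid\mathcal{F}_t]=e^{-s\psi(\xi)}$ you need an integration-by-parts/It\^o step that multiplies by the finite-variation factor $e^{-i\xi\cdot\int_0^\cdot b(X_r)\,dr}$; but that step already presupposes the semimartingale decomposition of $X$ with identified drift and quadratic variation---precisely what the paper establishes first. So your scheme, once repaired, collapses into the paper's argument rather than bypassing it.
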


\begin{rem} \rm Brownian motion with measure-valued singular drift on $\R^d$,
where $d\geq 2$ and the $\R^d$-valued drift $b$ is replaced by a measure $\mu=(\mu_1, \dots, \mu_d)$ in Kato class $\K_{d, 1}$,
 is introduced and constructed in Bass and Chen \cite{BC}.
With the two-sided heat kernel estimates from Theorem \ref{T:1.3}, one can easily construct L\'evy process $Z^a$ with singular  measure-valued drift
$\mu=(\mu_1, \dots, \mu_d)$ in the sense of \cite{BC} and obtain its sharp two-sided heat kernel estimates. The key is to note that the two-sided heat kernel estimates in Theorem \ref{T:1.3} depend  on the drift $b$ only through its upper bound of $M_b ( r)$ so we can approximate the measure-valued drift $\mu$ by a sequence of function-valued drifts whose Kato norms are uniformly controlled by that of $\mu$.
 Here are the details. Suppose $\mu=(\mu_1, \dots, \mu_d)\in K_{d, 1}$, that is,
$$
 \lim_{r\to 0}  M_\mu  (r) := \lim_{r\to 0} \sup_{x\in \R^d} \int_{|y-x|<r} \frac{1}{|y-x|^{d-1}} |\mu|(dy)=0 .
$$
Let $\varphi \in C^\infty_c (\R^d)$ with $\varphi \geq 0$ and $\int_{\R^d} \varphi (x) dx=1$.
We can approximate  $\mu$ by $b_n (x)dx = \varphi_n *\mu (dx)$, where
$\varphi_n (x)= n^{d} \varphi (nx)$. Note that  $\{b_n; n\geq 1\} \subset \K_{d, 1}$ with
$M_{b_n} (r ) \leq M_\mu ( r )$ for every $n\geq 1$ and $r>0$.
Denote by $p^{b_n}(t, x, y)$ and $X^n$ the heat kernel for $\LL^{b_n}$ and its corresponding Feller process.
The two-sided heat kernel estimate \eqref{e:1.12} holds uniformly in $n$ for
$p^{a, b_n}(t, x, y)$
on $(0, 1] \times \R^d \times \R^d$.  Similar to that of \cite[Theorem 3.9]{KS},
one  can show that $\{p^{a, b_n}(t, x, y); t>0, x, y\in \R^d\}$
converges locally uniformly to $p^{a, \mu}(t, x, y)$. It is easy to verify that
$p^{a, \mu}(t, x, y)$ is a positive kernel which enjoys the two-sided estimates
\eqref{e:1.12}. Moreover, it
satisfies the Chapman-Kolmogorov equation
and $\int_{\R^d} p^{a, \mu}(t, x, y)dy=1$ for all $t>0$ and $x\in \R^d$. The kernel $p^{a, \mu}(t, x, y)$ determines a Feller process $X$.
It is not hard to verify that it is a L\'evy process $Z^a$ with measure-valued drift $\mu$ in the sense of Bass and Chen \cite{BC}. See \cite{KS} for the case
when $Z^a$ is a rotationally symmetric stable process on $\R^d$.  \qed
\end{rem}

The rest of this paper is organized as follows. In Section 2, we recall some properties  of $p^a(t,x,y)$ and derive its gradient estimates, as well as properties of functions in Kato class $\mathbb{K}_{d,1}$. In Section 3, we construct $p^{a,b}(t,x,y)$ using the series of $p_k^{a,b}(t,x,y)$ and prove Theorem \ref{T:main} through a series of lemmas except the positivity of $p^{a,b}(t,x,y)$. In addition, we derive the upper bound of $|p^{a,b}(t,x,y)|$. The positivity of $p^{a,b}(t,x,y)$ is shown in Section 4, where we use the fact that $\{P_t^{a,b}, t\ge 0\}$ is Feller semigroup, that is, a strongly continuous semigroup in $C_\infty(\mathbb{R}^d)$. In Section 5, we determine the L\'evy system of the Feller process $X^{a,b}$ associated with the Feller semigroup $\{P_t^{a,b}, t\ge 0\}$. We then use it to derive the lower bound estimate of $p^{a,b}(t, x, y)$. In Section 6, we prove Theorem \ref{T:1.4} and Theorem \ref{T:1.5}.

For convenience, in the rest of this paper, we assume $d\geq 2$. When $d=1$, it can be treated in a similar but simpler way as the drift $b$ would be bounded. Throughout this paper, unless stated otherwise, we use $C_1,C_2,\cdots ,$ to denote positive constants whose value are fixed throughout the paper, while using $c_1,c_2,\cdots, $ to denote positive constants whose exact value are unimportant and whose value can change from one appearance to another. We use notation $c = c(d,\alpha,\cdots )$ to indicate that this constant depends only on $d,\alpha,\cdots$.  For two non-negative functions $f,g$, the notation $f \stackrel{c}{\lesssim} g$ means that $f \le cg$ on their common domains of definition while $f \stackrel{c}{\asymp} g$ means that $c^{-1} g \le f \le c g$. We also write mere $\lesssim$ and $\asymp$ if $c$ is unimportant or understood. For reader's convenience, we summarize the notation of functions that will appear many times throughout this paper. For $t>0$ and $x, y\in \R^d$,
\begin{eqnarray}
  p^a (t, x, y)&=& p^a (t, x-y) :  \hbox{ the transition density function of } B+aY \nonumber \\
  g_{d, \beta} (t,x, y ) &=& g_{d, \beta} (t, x-y):= t^{-d/2}\exp\left(-\frac{\beta|x-y |^2}{t}\right),    \label{equ_gDef} \\
  g_d (t, x, y)&=& g_d (t, x-y):= (4\pi)^{-d/2} g_{d, 1/4} (t, x-y),  \nonumber \\
  q^a_{d, \beta} (t, x, y) &=& q^a_{d, \beta} (t, x- y) :=  g_{d, \beta} (t, x-y) + t^{-d/2}\wedge \frac{a^\alpha t}{|x-y|^{d+\alpha}}.\label{e:1.13}
\end{eqnarray}

\section{Preliminaries}
The following is a direct consequence of \eqref{equ_paSharpBnds}; see  \cite[Corollary 1.2]{ChenKimSong.2011}.

\const{\CTpaLc}
\const{\CTpaLexp}
\const{\CTpaUc}
\const{\CTpaUexp}

\begin{thm}\label{thm_pabound}
  For any $M > 0$ and $T > 0$, there exist constants $C_i,i = \CTpaLexp,\CTpaUexp$ and $C_j = C_j(d,\alpha,M,T)$, $j=\CTpaLc,\CTpaUc$ such that for all $a\in (0,M]$ and $(t,x ) \in (0,T]\times \mathbb{R}^d $,
  \begin{equation*}
    C_{\CTpaLc}q_{d,C_{\CTpaLexp}}^a(t,x ) \le p^a(t,x ) \le C_{\CTpaUc}q_{d,C_{\CTpaUexp}}^a(t,x ).
  \end{equation*}
\end{thm}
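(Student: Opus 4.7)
The plan is to derive Theorem \ref{thm_pabound} directly from the sharp bounds \eqref{equ_paSharpBnds} by exploiting the boundedness of $a$ and $t$: on the range $a\in(0,M]$, $t\in(0,T]$, the quantity $(a^\alpha t)^{-d/\alpha}$ can be traded for $t^{-d/2}$ up to multiplicative constants depending on $M$ and $T$, after which the rest is an elementary manipulation of $\min$'s and sums.

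The key comparison is
\[
\frac{(a^\alpha t)^{-d/\alpha}}{t^{-d/2}} \;=\; a^{-d}\,t^{-d(2-\alpha)/(2\alpha)} \;\ge\; M^{-d}\,T^{-d(2-\alpha)/(2\alpha)} \;=:\; c_0 \;>\; 0,
\]
valid because the exponent $(2-\alpha)/(2\alpha)$ is strictly positive. Setting $c_1=\min(1,c_0)$ and using the abbreviations $A=t^{-d/2}\wedge(a^\alpha t)^{-d/\alpha}$ and $C=(a^\alpha t)^{-d/\alpha}\wedge\frac{a^\alpha t}{|x|^{d+\alpha}}$ appearing in \eqref{equ_paSharpBnds}, this inequality immediately yields
\[
c_1\,t^{-d/2}\;\le\;A\;\le\;t^{-d/2},\qquad c_1\Bigl(t^{-d/2}\wedge\tfrac{a^\alpha t}{|x|^{d+\alpha}}\Bigr)\;\le\;C\;\le\;\tfrac{a^\alpha t}{|x|^{d+\alpha}}.
\]

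For the upper bound, I would apply the elementary inequality $X\wedge(Y+Z)\le Y+(X\wedge Z)$ (valid for $X,Y,Z\ge 0$, checked by splitting on $Y+Z\le X$ versus $Y+Z>X$) with $X=t^{-d/2}$, $Y=t^{-d/2}e^{-|x|^2/(C_2 t)}$, and $Z=\frac{a^\alpha t}{|x|^{d+\alpha}}$ to get
\[
A\wedge\Bigl(t^{-d/2}e^{-|x|^2/(C_2 t)}+C\Bigr)\;\le\;t^{-d/2}e^{-|x|^2/(C_2 t)}+\Bigl(t^{-d/2}\wedge\tfrac{a^\alpha t}{|x|^{d+\alpha}}\Bigr)\;=\;q_{d,1/C_2}^{\,a}(t,x),
\]
so the RHS of \eqref{equ_paSharpBnds} is at most $C_1\,q_{d,1/C_2}^{\,a}(t,x)$.

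For the lower bound, the estimate on $C$ above gives $t^{-d/2}e^{-C_2|x|^2/t}+C\ge c_1\,q_{d,C_2}^{\,a}(t,x)$, and since $q_{d,C_2}^{\,a}(t,x)\le 2t^{-d/2}$ one also has $A\ge (c_1/2)\,q_{d,C_2}^{\,a}(t,x)$. Their minimum is therefore at least $(c_1/2)\,q_{d,C_2}^{\,a}(t,x)$, so the LHS of \eqref{equ_paSharpBnds} is bounded below by $(c_1/(2C_1))\,q_{d,C_2}^{\,a}(t,x)$. This yields the theorem with $C_{\CTpaLexp}=C_2$, $C_{\CTpaUexp}=1/C_2$ (depending only on $d,\alpha$), and $C_{\CTpaLc}$, $C_{\CTpaUc}$ depending on $d,\alpha,M,T$. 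The only substantive step is the comparison $(a^\alpha t)^{-d/\alpha}\gtrsim t^{-d/2}$ uniformly on $(0,M]\times(0,T]$; no serious obstacle is anticipated.
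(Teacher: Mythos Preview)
Your proposal is correct and is precisely the ``direct consequence'' the paper alludes to: the paper gives no proof of this theorem, merely citing it as a consequence of \eqref{equ_paSharpBnds} (and referring to \cite[Corollary~1.2]{ChenKimSong.2011}). Your argument supplies exactly the elementary details the paper omits---the key observation that $(a^\alpha t)^{-d/\alpha}\gtrsim t^{-d/2}$ uniformly on $(0,M]\times(0,T]$ since $\alpha<2$, followed by routine manipulations of minima---and there is nothing to add.
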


It is easy to see that for any $\theta > 0$, there is a positive constant $c_1 = c_1(d,\beta,\theta)$ such that
\begin{equation}\label{equ_Gaussianup}
  g_{d, \beta}(t,x ) \le  t^{-d/2} \wedge \frac{c_1t^\theta}{|x |^{d+2\theta}},\quad t> 0 \text{ and } x \in \mathbb{R}^d,
\end{equation}
which will be frequently used in the rest of this paper.

\const{\CTqEqv}
Recall the definition of $q_{d,\beta}^a(t,x )$ in \eqref{e:1.11}. There is a constant $C_{\CTqEqv} = C_{\CTqEqv}(\alpha,M,T,\beta)$ such that for all $a\in (0,M]$ and all $(t, z)\in (0,T]\times\mathbb{R}^d$,
\begin{equation}\label{equ_qeqvlnt}
  q_{d,\beta}^a(t,z) \stackrel{C_{\CTqEqv}}{\asymp} g_{d, \beta}(t,z)+\frac{a^{\alpha}t}{|z|^{d+\alpha}}{\bf 1}_{\{|z|^2 \ge t\}} .
\end{equation}
Indeed, $ t^{-d/2}\wedge \frac{a^{\alpha}t}{|z|^{d+\alpha}}  \le t^{-d/2}\le e^{\beta}g_{d, \beta}(t,z)$ when $|z|^2 < t$. Thus
\begin{align}\label{equ_qright}
  q_{d,\beta}^a(t,z) &\stackrel{e^{\beta} +1 }{\lesssim} g_{d, \beta}(t,z)+\frac{a^{\alpha}t}{|z|^{d+\alpha}}{\bf 1}_{\{|z|^2 \ge t\}}
  \qquad \hbox{for   } a  ,t>0 \hbox{ and } z\in\mathbb{R}^d .
\end{align}
On the other hand, for  $a\in (0,M]$ and $t\in (0,T]$,
\begin{equation*}
  \frac{a^{\alpha}t}{|z|^{d+\alpha}} \le M^\alpha t^{-d/2+1-\alpha/2} \le M^\alpha T^{1-\alpha/2} t^{-d/2}
  \quad \hbox{if }  |z|^2 \ge t ,
\end{equation*}
and so
\begin{equation}\label{equ_qleft}
  g_{d, \beta}(t,z ) + \frac{a^{\alpha}t}{|z|^{d+\alpha}}{\bf 1}_{\{|z|^2\ge t\}} \stackrel{M^\alpha T^{1-\alpha/2}\vee 1}{\lesssim} q_{d,\beta}^a(t,z).
\end{equation}
The claim (\ref{equ_qeqvlnt}) now follows from (\ref{equ_qright}) and (\ref{equ_qleft}) with $C_{\CTqEqv} = (e^\beta+1) \vee (M^\alpha T^{1-\alpha/2}\vee 1)$.

\medskip

When there is no danger of confusion, for $x\in \R^d$ and integer $k\geq 1$, for simplicity, we write $q^a_{d+k, \beta} (t, x)$ for $q^a_{d+k, \beta} (t, \wt x)$, where $\wt x:=(x, 0, \dots, 0)\in \R^{d+k}$. Same convention will apply to
function $g_{d, \beta}(t, x)$.

\medskip
The following theorem gives the two-sided estimate of $|\nabla_x p^a(t,x )|$. In this paper, only its upper bound will be used.

\const{\CTnablapaU}

\begin{thm}\label{thm_Gradpabound}
  For any $M > 0$ and $T > 0$, there is a positive constant $C_{\CTnablapaU}=C_{\CTnablapaU}(d,\alpha,M,T)$ such that for all $a \in (0,M]$ and $(t,x ) \in (0,T]\times\mathbb{R}^d$,
  \begin{align*}
    2\pi C_{\CTpaLc}q_{d+2,C_{\CTpaLexp}}^{a}(t,x )|x | \le |\nabla_x p^a(t,x )| \le C_{\CTnablapaU} q_{d+1,3C_{\CTpaUexp}/4}^a(t,x ).
  \end{align*}
\end{thm}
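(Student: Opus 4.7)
My plan is to exploit the subordination representation of $Y$ to derive an exact identity for $\nabla_z p^a(t,z)$ that reduces the two-sided gradient estimate to Theorem~\ref{thm_pabound} applied in dimension $d+2$.

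Since $Y$ is a rotationally symmetric $\alpha$-stable L\'evy process, it can be realized as $Y_s = W_{S_s}$, where $W$ is an auxiliary Brownian motion (with the same normalization as $B$) independent of a one-sided $\alpha/2$-stable subordinator $\{S_s\}_{s\ge 0}$. Writing $\mu_s$ for the law of $S_s$, one has $p^Y(s,w) = \int_0^\infty g_d(u,w)\,\mu_s(du)$. Combined with the independence of $B$ and $Y$ and the Gaussian semigroup property $g_d(t,\cdot)*g_d(u,\cdot) = g_d(t+u,\cdot)$, this gives the compact representation
\begin{equation*}
 p^a(t,z) = \int_0^\infty g_d(t+u,z)\,\mu_{a^\alpha t}(du).
\end{equation*}
Differentiating under the integral (justified by smoothness and Gaussian decay) and using $\nabla_z g_d(s,z) = -\tfrac{z}{2s}g_d(s,z)$ yields
\begin{equation*}
 \nabla_z p^a(t,z) = -z\int_0^\infty \frac{g_d(t+u,z)}{2(t+u)}\,\mu_{a^\alpha t}(du).
\end{equation*}

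The key algebraic identity is $\tfrac{g_d(s,z)}{2s} = 2\pi\, g_{d+2}(s,\tilde z)$ for $z\in\R^d$ and $\tilde z = (z,0,0)\in\R^{d+2}$, which is immediate from the definitions of the Gaussian densities. Substituting, the integral above is recognized as $\tilde p^a(t,\tilde z)$, the transition density at $\tilde z$ of the $(d+2)$-dimensional analogue $\tilde B + a\tilde Y$ of $Z^a$. Therefore
\begin{equation*}
 |\nabla_z p^a(t,z)| = 2\pi|z|\,\tilde p^a(t,\tilde z).
\end{equation*}
Applying Theorem~\ref{thm_pabound} with $d$ replaced by $d+2$ gives $C_{\CTpaLc}\, q^a_{d+2,C_{\CTpaLexp}}(t,z) \le \tilde p^a(t,\tilde z) \le C_{\CTpaUc}\, q^a_{d+2,C_{\CTpaUexp}}(t,z)$, and the lower bound of the theorem follows at once, with the explicit factor $2\pi$ coming from the identity just used.

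For the upper bound, it remains to verify the pointwise comparison $|z|\, q^a_{d+2,\beta}(t,z) \le c\, q^a_{d+1,3\beta/4}(t,z)$ uniformly for $a\in(0,M]$ and $t\in(0,T]$. The Gaussian piece follows from the elementary estimate $se^{-\beta s^2} \le c(\beta)\, e^{-3\beta s^2/4}$ applied with $s = |z|/\sqrt{t}$, and the polynomial piece reduces to a short case split depending on whether $|z|^{d+2+\alpha}$ is at most or exceeds $a^\alpha t^{(d+4)/2}$, exploiting that $a^\alpha t^{(2-\alpha)/2}$ is bounded on the relevant parameter range. The main substantive step, and the one that makes the whole argument work, is the identification of $|\nabla_z p^a(t,z)|$ as a constant multiple of $|z|$ times a $(d+2)$-dimensional heat kernel of the same type; everything else reduces to Theorem~\ref{thm_pabound} and routine inequalities.
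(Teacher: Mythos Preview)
Your proposal is correct and follows essentially the same route as the paper: both use the subordination representation to write $p^a(t,\cdot)$ as a mixture of Gaussians, differentiate under the integral, invoke the identity $\tfrac{g_d(s,z)}{2s}=2\pi\,g_{d+2}(s,\tilde z)$ to recognize $|\nabla_z p^a(t,z)|=2\pi|z|\,p^a_{(d+2)}(t,\tilde z)$, and then apply Theorem~\ref{thm_pabound} in dimension $d+2$. The only cosmetic difference is in the final step for the polynomial piece: the paper avoids your case split by passing through the equivalence~\eqref{equ_qeqvlnt}, which replaces $t^{-(d+2)/2}\wedge\frac{a^\alpha t}{|z|^{d+2+\alpha}}$ by $\frac{a^\alpha t}{|z|^{d+2+\alpha}}\mathbf{1}_{\{|z|^2\ge t\}}$ and makes the bound $|z|\cdot|z|^{-(d+2+\alpha)}=|z|^{-(d+1+\alpha)}$ immediate.
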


\begin{proof}
 It is well-known that, for each $t>0$, $x\mapsto p^a (t, x)$ attains its maximum at $x=0$ so we have $\nabla p^a(t, 0) = 0$. So it suffices to consider $x\in \R^d\setminus \{0\}$. Recall that $g_d (t, z) = (4\pi t)^{-d/2} e^{-|z|^2/(4t)}$, which is the transition density function of Brownian motion $B$. Let $S_t$ be the $\alpha/2$-stable subordinator at time $t$, independent of $B$, and $\eta_t^a(u)$ be the density function of $a^2S_t$. The L\'evy process $Z^a$ can be realized as a subordination of Brownian motion $B$; that is, $\{Z^a_t; t\geq 0\}$ has the same distribution as $\{B_{t+a^2S_t}; t\geq 0\}$. Thus
 \begin{align*}
   p^a(t,x) &= \int_t^{+\infty} g_d (u, x) \mathbb{P}(t + a^2S_t \in du)  = \int_t^{+\infty} g_d (u, x) \eta_t^a(u - t) du,
 \end{align*}
 and so
 \begin{align*}
   \nabla_x p^a(t,x) = \nabla_x \int_t^{\infty} g_d (u, x)\eta_t^a(u - t) du.
 \end{align*}
 Let $e_j = (0,\cdots,0,1,0,\cdots,0)$, where $1$ is on $j^{th}$ place. Let $x\in \R^d \setminus \{0\}$ and set $s \in (-|x|/2,|x|/2)$. By the mean-value theorem, there exists $\xi \in (-|s|,|s|)$ such that
 \begin{align*}
   \left|\frac{g_d (u, x + se_j) - g_d (u, x)}{s}\right| &= \left|\frac{\partial}{\partial x_j}g_d (u, x + \xi e_j)\right| = \left|\frac{x_j + \xi}{2u}g_d (u, x + \xi e_j)\right|   \\
   &\le \frac{|x|g_d (u, x/2)}{u} \le c(d)|x|^{-d-1},
 \end{align*}
 where $c(d)$ is a positive constant depending only on $d$. Since $\int_t^{\infty} c(d)|x|^{-d-1}\eta_t(u - t) du < \infty$, we have by the dominated convergence theorem
  \begin{equation}\label{equ_ghk}
    \begin{split}
      \nabla_x p^a(t,x) &= \int_t^{\infty} \nabla_x  g_d (u, x)\eta_t^a(u - t) du = \int_t^{\infty} -\frac{xg_d (u, x)}{2u}\eta_t^a(u - t) du = -2\pi xp_{(d+2)}^a(t,\tilde x),
    \end{split}
  \end{equation}
  where $\tilde x:=(x, 0, 0) \in \mathbb{R}^{d+2}$  and $p^a_{d+2}(t,\tilde x)$ is the transition density function of  $Z^a$ in dimension $d+2$. Thus, by Theorem \ref{thm_pabound}, we have
  \begin{equation}\label{e:2.6}
    2\pi C_{\CTpaLc}q_{d+2,C_{\CTpaLexp}}^a(t,  x)|x| \le |\nabla_x p^a(t,x)| \le 2\pi C_{\CTpaUc} q_{d+2,C_{\CTpaUexp}}^a(t,  x)|x|.
  \end{equation}
  Note that for all $t > 0$ and $x \in \mathbb{R}^d$,
  \begin{align*}
    t^{-(d+2)/2} \exp\left(-\frac{C_{\CTpaUexp}|x |^2}{t}\right)|x | &=t^{-(d+1)/2}\exp\left(-\frac{3C_{\CTpaUexp}}{4}\frac{|x |^2}{t}\right) \cdot \frac{|x|}{t^{1/2}}\exp\left(-\frac{C_{\CTpaUexp}}{4}\frac{|x |^2}{t}\right)\\
    & \le  \sqrt{\frac{2}{C_{\CTpaUexp}e}}t^{-(d+1)/2} \exp\left(-\frac{3C_{\CTpaUexp}}{4}\frac{|x |^2}{t}\right).
  \end{align*}
   This together with \eqref{equ_qeqvlnt} and \eqref{e:2.6} proves the theorem with
    $C_{\CTnablapaU} := 2\pi C_{\CTpaUc} C_{\CTqEqv} \left(\sqrt{2/(C_{\CTpaUexp}e)}\vee 1\right)$.
\end{proof}

For $\beta > \frac{1}{2}$ and a function $f$ on $\mathbb{R}^d$, define for $r>0$ and $x\in \R^d$,
\begin{equation*}
  H^\beta(r,x )= \frac{1}{|x  |^{d - 1}}\wedge \frac{r^\beta}{|x|^{d-1+ 2\beta}} \quad \hbox{and} \quad
  H_f^{\beta}(r,x) = \int_{\mathbb{R}^d} |f(y)| H^\beta (r,x-y) dy .
\end{equation*}

\const{\CTHM}
\begin{lem}\label{thm_HMb}
  Assume $\beta > \frac{1}{2}$. There is a constant $C_{\CTHM} = C_{\CTHM}(d,\beta)$ so that
  \begin{equation}\label{equ_HMb}
   M_f(\sqrt{r})\leq  H_f^\beta(r,x) \le C_{\CTHM}M_f(\sqrt{r}),
  \end{equation}
  for every $r > 0,x \in \mathbb{R}^d$ and for every $f$ on $\mathbb{R}^d$.
  Consequently, $f \in \mathbb{K}_{d,1}$ if and only if
  \begin{equation*}
    \lim_{r\downarrow 0} \sup_{x\in \mathbb{R}^d} H_f^{\beta} (r,x) = 0.
  \end{equation*}
\end{lem}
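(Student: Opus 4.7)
The plan is to establish the two inequalities separately, using the fact that $\sqrt{r}$ is precisely the threshold that separates the two regimes of the kernel $H^\beta(r,z)$: when $|z|<\sqrt{r}$, the inequality $|z|^{2\beta}<r^\beta$ forces $\frac{r^\beta}{|z|^{d-1+2\beta}}>\frac{1}{|z|^{d-1}}$, so $H^\beta(r,z)=\frac{1}{|z|^{d-1}}$; when $|z|\ge\sqrt{r}$, $H^\beta(r,z)=\frac{r^\beta}{|z|^{d-1+2\beta}}$.

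For the lower bound, restricting the defining integral of $H_f^\beta(r,x)$ to the region $\{y:|x-y|<\sqrt{r}\}$ and using the identity $H^\beta(r,x-y)=|x-y|^{-(d-1)}$ there gives
\[
H_f^\beta(r,x)\;\ge\;\int_{|x-y|<\sqrt{r}}\frac{|f(y)|}{|x-y|^{d-1}}\,dy,
\]
and taking the supremum over $x$ yields $M_f(\sqrt{r})\le\sup_x H_f^\beta(r,x)$, which is the content of the stated left inequality.

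For the upper bound, set $R=\sqrt{r}$ and split $H_f^\beta(r,x)=I_1+I_2$ along $|x-y|<R$ and $|x-y|\ge R$. The inner piece is controlled trivially by $I_1\le M_f(R)$. For the outer piece, decompose into dyadic annuli $A_k=\{2^kR\le|x-y|<2^{k+1}R\}$, $k\ge 0$. On $A_k$ the kernel obeys $\frac{r^\beta}{|x-y|^{d-1+2\beta}}\le 2^{-2k\beta}\,|x-y|^{-(d-1)}$. Now cover $A_k$ by $N_k\le c(d)\,2^{kd}$ balls $B(z_i,R)$ of radius $R$; on each such ball the crude bounds $|x-y|\ge 2^kR$ (from the annulus) together with $|y-z_i|\le R$ give
\[
\int_{B(z_i,R)}\!\!\frac{|f(y)|}{|x-y|^{d-1}}\,dy\;\le\;(2^kR)^{-(d-1)}\!\int_{B(z_i,R)}\!|f(y)|\,dy\;\le\;(2^kR)^{-(d-1)}R^{d-1}M_f(R)\;=\;2^{-k(d-1)}M_f(R),
\]
where the intermediate estimate uses $\int_{B(z_i,R)}|f(y)|\,dy\le R^{d-1}\int_{B(z_i,R)}\frac{|f(y)|}{|y-z_i|^{d-1}}dy\le R^{d-1}M_f(R)$. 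Summing over the $\sim 2^{kd}$ balls produces the key annulus bound $\int_{A_k}|f(y)|/|x-y|^{d-1}\,dy\le c(d)\,2^k M_f(R)$. Multiplying by $2^{-2k\beta}$ and summing,
\[
I_2\;\le\;c(d)\,M_f(R)\sum_{k=0}^\infty 2^{k(1-2\beta)},
\]
and the geometric series is finite \emph{precisely} because $\beta>1/2$. This yields the constant $C_{\CTHM}=C_{\CTHM}(d,\beta)$.

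The ``consequently'' clause is immediate: since $r\downarrow 0$ iff $\sqrt{r}\downarrow 0$, the two-sided inequality shows $\lim_{r\downarrow 0}M_f(r)=0$ is equivalent to $\lim_{r\downarrow 0}\sup_x H_f^\beta(r,x)=0$. The main obstacle is the annulus estimate: one must count the balls and estimate each contribution so that the exponential factors $2^{kd}$ (number of balls) and $2^{-k(d-1)}$ (pointwise decay of $|x-y|^{-(d-1)}$) cancel down to a single power $2^k$; this is what makes $\beta>1/2$ the sharp threshold for summability rather than a stronger assumption like $\beta>(d-1)/2$.
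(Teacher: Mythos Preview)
Your argument is correct and follows essentially the same dyadic-annulus approach as the Bogdan--Jakubowski proof that the paper cites (Lemma~11 and Corollary~12 of \cite{BogdanJakubowski.2007}); the paper itself omits the details and simply points to that reference with $2$ in place of $\alpha$. Your identification of $\beta>1/2$ as the sharp summability threshold, via the balance $2^{kd}\cdot 2^{-k(d-1)}\cdot 2^{-2k\beta}=2^{k(1-2\beta)}$, is exactly the mechanism at work there. One minor point of phrasing: when you write ``on each such ball the crude bounds $|x-y|\ge 2^kR$ (from the annulus) \ldots give $\int_{B(z_i,R)}\frac{|f(y)|}{|x-y|^{d-1}}\,dy\le\cdots$'', the bound $|x-y|\ge 2^kR$ is only valid on $A_k$, not on all of $B(z_i,R)$; strictly speaking the first integral should be over $A_k\cap B(z_i,R)$, after which you enlarge the domain to $B(z_i,R)$ once $|x-y|^{-(d-1)}$ has been replaced by the constant $(2^kR)^{-(d-1)}$. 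This is harmless, and your overall chain of inequalities stands. You are also right that the left inequality in \eqref{equ_HMb} can only hold after taking the supremum in $x$ (as is needed for the ``consequently'' clause); the pointwise version as literally written is false for compactly supported $f$ and distant $x$.
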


The lower bound in \eqref{equ_HMb} is trivial. The proof of the upper bound in \eqref{equ_HMb} is  almost the same as that for \cite[Lemma 11 and  Corollary 12]{BogdanJakubowski.2007} except with 2 in place of $\alpha$ there. So we omit its details.

Let
\begin{align*}
  N^{\beta}(r,x ) =\int_0^r g_{d+1, \beta} (s, x)ds = \int_0^r s^{-(d+1)/2}\exp\left(-\frac{\beta|x |^2}{s}\right)ds,\quad r> 0, x  \in \mathbb{R}^d.
\end{align*}

\begin{lem}\label{thm_Kequi}
  $f \in \mathbb{K}_{d,1}$ if and only if
  \begin{equation}\label{e:2.8}
    \lim_{r\downarrow 0} \sup_{x\in\mathbb{R}^d} \int_{\mathbb{R}^d} |f(y)| N^\beta(r,x-y) dy = 0  \text{ for all } \beta > 0.
  \end{equation}
\end{lem}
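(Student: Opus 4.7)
The plan is to compare $N^\beta(r,x)$ pointwise to the kernel $H^{\beta'}(r,x)$ of Lemma \ref{thm_HMb} and then invoke that lemma.

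First, I would perform the substitution $u=\beta|x|^2/s$ in the definition of $N^\beta(r,x)$, which gives the clean identity
\begin{equation*}
 N^\beta(r,x) \;=\; \beta^{-(d-1)/2}\,|x|^{-(d-1)}\int_{\beta |x|^2/r}^{\infty} u^{(d-3)/2}\,e^{-u}\,du,
\end{equation*}
valid for $x\neq 0$ (and $d\ge 2$, which is assumed in the paper, so the integrand is integrable near $0$). This representation shows that $N^\beta(r,x)$ depends on $(r,x)$ only through the dimensionless ratio $\beta|x|^2/r$, making the two natural regimes explicit.

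Next I split into two regimes. When $\beta|x|^2\le r$, the tail integral is bounded between two positive constants depending only on $d,\beta$, so
\begin{equation*}
 N^\beta(r,x)\;\asymp\; |x|^{-(d-1)}.
\end{equation*}
When $\beta|x|^2> r$, I would use the elementary bound $\int_A^\infty u^{(d-3)/2}e^{-u}\,du\le c_{\beta'}\,A^{-\beta'}$ for any $\beta'>0$ (deduced from $e^{-u/2}\le c\,u^{-\beta'-(d-3)/2}$ for $u\ge 1$), applied with $A=\beta|x|^2/r$, to get
\begin{equation*}
 N^\beta(r,x)\;\le\; c\,\frac{r^{\beta'}}{|x|^{d-1+2\beta'}}.
\end{equation*}
Combining both cases, for every $\beta'>0$ there is $C=C(d,\beta,\beta')$ with
\begin{equation*}
 N^\beta(r,x)\;\le\; C\,H^{\beta'}(r,x)\quad\text{for all }r>0,\;x\in\mathbb{R}^d\setminus\{0\}.
\end{equation*}

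With these pointwise estimates in hand, both implications are routine. If $f\in\mathbb{K}_{d,1}$, fix any $\beta'>\tfrac12$; then by Fubini and the upper comparison above,
\begin{equation*}
 \sup_{x}\int_{\mathbb{R}^d}|f(y)|\,N^\beta(r,x-y)\,dy\;\le\; C\sup_x H_f^{\beta'}(r,x)\;\le\; C\,C_{\CTHM} M_f(\sqrt r)\;\longrightarrow\;0
\end{equation*}
by Lemma \ref{thm_HMb}. Conversely, if the left-hand side of \eqref{e:2.8} tends to zero for some $\beta>0$ (in fact for all, but one suffices), I would use the lower bound from the first regime: for $|x-y|^2\le r/\beta$ we have $N^\beta(r,x-y)\ge c_0\,|x-y|^{-(d-1)}$, so
\begin{equation*}
 M_f(\sqrt{r/\beta})\;=\;\sup_x\!\int_{|x-y|<\sqrt{r/\beta}}\!\frac{|f(y)|}{|x-y|^{d-1}}\,dy\;\le\; c_0^{-1}\sup_x\!\int_{\mathbb{R}^d}|f(y)|\,N^\beta(r,x-y)\,dy\;\longrightarrow\;0,
\end{equation*}
so $f\in\mathbb{K}_{d,1}$. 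The only mildly delicate point is keeping track of the role of $\beta'>\tfrac12$ (needed to apply Lemma \ref{thm_HMb}) versus the $\beta>0$ appearing in $N^\beta$; the two parameters are independent and can be chosen separately, so this is not a genuine obstacle.
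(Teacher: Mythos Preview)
Your argument is correct and follows essentially the same route as the paper: the same change of variable $u=\beta|x|^2/s$ yielding the incomplete-Gamma representation, a pointwise lower bound $N^\beta(r,x)\gtrsim |x|^{-(d-1)}$ on $\{|x|\lesssim\sqrt r\}$, a pointwise upper bound $N^\beta(r,x)\lesssim H^{\beta'}(r,x)$, and then an appeal to Lemma~\ref{thm_HMb}. The only cosmetic difference is that the paper fixes $\beta'=1$ in the upper bound rather than carrying a general $\beta'$, which suffices since Lemma~\ref{thm_HMb} only needs some $\beta'>\tfrac12$.
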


\begin{proof} Condition \eqref{e:2.8} is introduced in \cite{Zhang.1996}.
Its equivalence to the $\mathbb{K}_{d,1}$ condition is proved in \cite[Proposition 2.3]{KimSong.2006}. For reader's convenience, we give a short proof here.

 By a change of variable $t=\beta|x|^2/s$, we have
  \begin{equation}\label{equ_Nequal}
    N^{\beta}(r,x )= \frac1{\beta^{(d-1)/2}|x|^{d-1}}
    \int_{\beta|x|^2/r}^\infty  t^{(d-3)/2}e^{-t}  dt.
  \end{equation}
Thus
  \begin{equation}\label{equ_Nupper}
  c_1 (d, \beta) \frac{1}{|x|^{d-1}} {\bf 1}_{\{|x|\leq \sqrt{r}\}}
  \leq N^{\beta}(r,x )\leq c_2 (d, \beta  ) H^1 (r, x)
  \end{equation}
The equivalence now follows from Lemma \ref{thm_HMb}.
\end{proof}

\section{Construction and upper bound estimates}

By \cite[Lemma 3.1]{Zhang.1997} and its proof, we have the following lemma. Recall that $g_{d, \beta}(t,x,y):=g_{d, \beta}(t, x-y)$ is defined by (\ref{equ_gDef}), and define $H^\beta (r, x, y)= H^\beta (r, x-y)$ and $N^\beta(r,x,y) = N^\beta(r,x-y)$.

\begin{lem}\label{thm_Gaussian3P}
  For any $0<\beta_1<\beta_2<\infty$, there exist constants $C_g = C_g(d,\beta_1/\beta_2)$ and $C_\beta = min\{\beta_2-\beta_1,\beta_1/2\}$ such that for all $t > 0$ and $x,y,z\in \mathbb{R}^d$,
  \begin{align*}
    \int_0^t g_{d, \beta_1}(t-s,x,z)s^{-1/2}g_{d, \beta_2}(s,z,y) ds \le C_g(N^{C_\beta}(t,x,z)+N^{C_\beta}(t,z,y)) g_{d, \beta_1}(t,x,y).
  \end{align*}
\end{lem}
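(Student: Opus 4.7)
The plan is to split $\int_0^t = \int_0^{t/2}+\int_{t/2}^t$ and apply a pointwise weighted ``3P'' inequality on each half, following the strategy of Zhang's Lemma~3.1 in \cite{Zhang.1997}.

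The analytic input is the elementary identity $\frac{a^2}{u}+\frac{b^2}{v}-\frac{(a+b)^2}{u+v}=\frac{(av-bu)^2}{uv(u+v)}\ge 0$, used with $u=t-s$, $v=s$. Combining it with the triangle inequality $|x-z|+|z-y|\ge|x-y|$ and $\beta_2\ge\beta_1$ gives the weighted 3P estimate
\[
\frac{\beta_1|x-z|^2}{t-s}+\frac{\beta_2|z-y|^2}{s}\ \ge\ \frac{\beta_1|x-y|^2}{t}+\frac{(\beta_2-\beta_1)|z-y|^2}{s},\qquad 0<s<t.
\]
A dual form, obtained by splitting $\beta_1|x-z|^2/(t-s)=\tfrac{\beta_1}{2}|x-z|^2/(t-s)+\tfrac{\beta_1}{2}|x-z|^2/(t-s)$ and reapplying the same 3P estimate to $\tfrac{\beta_1}{2}|x-z|^2/(t-s)+\beta_2|z-y|^2/s$, produces an $|x-z|^2/(t-s)$-type residual; the constraint $C_\beta\le\beta_1/2$ is precisely what is needed to absorb the $\beta_1/2$ coefficient of this residual into $C_\beta$.

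On $[0,t/2]$, the bound $(t-s)^{-d/2}\le 2^{d/2}t^{-d/2}$ extracts the prefactor $t^{-d/2}$ of $g_{d,\beta_1}(t,x-y)$; the remaining $s^{-1/2}\!\cdot s^{-d/2}=s^{-(d+1)/2}$ combines with the residual $e^{-(\beta_2-\beta_1)|z-y|^2/s}$ from the 3P inequality to yield $g_{d+1,\beta_2-\beta_1}(s,z-y)$. Integrating gives $\lesssim g_{d,\beta_1}(t,x-y)\,N^{\beta_2-\beta_1}(t/2,z-y)$, which is bounded by $g_{d,\beta_1}(t,x-y)\,N^{C_\beta}(t,z-y)$ since $N^\beta$ is monotone decreasing in $\beta$ and $C_\beta\le\beta_2-\beta_1$.

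On $[t/2,t]$, we reverse roles: $s^{-1/2}\le\sqrt{2/t}$ and $s^{-d/2}\le 2^{d/2}t^{-d/2}$ are harmless, but $(t-s)^{-d/2}$ is singular and must be matched with the exponential $e^{-\beta_1|x-z|^2/(t-s)}$ to produce a $g_{d+1,C_\beta}(t-s,x-z)$ factor via $(t-s)^{-d/2}\le t^{1/2}(t-s)^{-(d+1)/2}$. Applying the dual 3P inequality and then substituting $u=t-s$ in the $s$-integral yields $\lesssim g_{d,\beta_1}(t,x-y)\,N^{C_\beta}(t/2,x-z)\le g_{d,\beta_1}(t,x-y)\,N^{C_\beta}(t,x-z)$. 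Summing the two pieces gives the claimed inequality.

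The most delicate step will be this second piece: for $d\ge 3$ the factor $(t-s)^{-d/2}$ is not integrable near $s=t$, so one cannot afford to discard the exponential $e^{-\beta_1|x-z|^2/(t-s)}$. The dual 3P inequality must therefore be used to distribute the Gaussian exponents so that both the target factor $g_{d,\beta_1}(t,x-y)$ and the $N^{C_\beta}(t,x-z)$ integrand emerge simultaneously; the necessary book-keeping is what forces the constant $C_g$ to depend on the ratio $\beta_1/\beta_2$ (the bound degenerates as $\beta_2\downarrow\beta_1$, consistent with $C_\beta\downarrow 0$ in that limit).
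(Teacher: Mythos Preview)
The paper gives no proof of its own here; it simply cites \cite[Lemma~3.1]{Zhang.1997} and its proof. Your split $\int_0^{t/2}+\int_{t/2}^t$ and your treatment of the first half are correct and match that argument.

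There is, however, a genuine gap on the second half. Your ``dual 3P'' --- split $\beta_1=\tfrac{\beta_1}{2}+\tfrac{\beta_1}{2}$ and reapply the basic estimate --- yields
\[
\frac{\beta_1|x-z|^2}{t-s}+\frac{\beta_2|z-y|^2}{s}\ \ge\ \frac{\beta_1}{2}\,\frac{|x-z|^2}{t-s}+\frac{\beta_1}{2}\,\frac{|x-y|^2}{t}+\Big(\beta_2-\frac{\beta_1}{2}\Big)\frac{|z-y|^2}{s},
\]
so the coefficient in front of $|x-y|^2/t$ is only $\beta_1/2$, not $\beta_1$. Followed literally, your scheme on $[t/2,t]$ therefore outputs $g_{d,\beta_1/2}(t,x-y)\,N^{\beta_1/2}(t,x-z)$, not $g_{d,\beta_1}(t,x-y)\,N^{C_\beta}(t,x-z)$ as you claim. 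This is not a bookkeeping slip: no pointwise inequality of the form
\[
(\beta_1-C_\beta)\,\frac{|x-z|^2}{t-s}+\beta_2\,\frac{|z-y|^2}{s}\ \ge\ \frac{\beta_1|x-y|^2}{t},\qquad s\in[t/2,t],
\]
can hold with $C_\beta$ as large as $\min\{\beta_2-\beta_1,\beta_1/2\}$. For instance, take $\beta_2=2\beta_1$ (so $\min\{\beta_2-\beta_1,\beta_1/2\}=\beta_1/2$), $s=t/2$, and $x,z,y$ collinear with $|x-z|=4$, $|z-y|=1$; the left side is $20\beta_1/t$ while the right side is $25\beta_1/t$.

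The repair is to feed the constraint $t-s\le t/2$ into the convexity step itself, not just into the power prefactors. With $u=t-s\in(0,t/2]$ and the elementary bound $Aa^2+Bb^2\ge\frac{AB}{A+B}(a+b)^2$ applied with $A=(\beta_1-C_\beta)/u$, $B=\beta_2/(t-u)$, a short computation shows the displayed inequality holds for all $u\in(0,t/2]$ exactly when
\[
C_\beta\ \le\ \frac{\beta_1(\beta_2-\beta_1)}{2\beta_2-\beta_1}.
\]
This constant depends only on $\beta_1/\beta_2$ (consistent with your remark about $C_g$) and is strictly smaller than $\min\{\beta_2-\beta_1,\beta_1/2\}$ except in the limits $\beta_2\downarrow\beta_1$ and $\beta_2\uparrow\infty$. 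With this value your argument goes through verbatim and gives the conclusion with $g_{d,\beta_1}(t,x-y)$; the resulting bound is formally weaker than the one printed (since $N^\beta$ decreases in $\beta$), but it is all that the subsequent estimates in the paper use.
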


\const{\CTtherepI}
\const{\CTtherepII}
In the rest of this paper, we assume $b \in \mathbb{K}_{d,1}$ and let $\gamma = (1+\alpha \wedge 1)/2$. The following lemma plays an important role in this paper and it is an analogy of \cite[Lemma 13]{BogdanJakubowski.2007} or \cite[Lemma 3.1]{Zhang.1997}.
\begin{lem}\label{thm_ace}
  Suppose $M > 0$ and $T > 0$. For any $0< \beta_1< \beta_2 <\infty$, there is a positive constant $C_{\CTtherepI}=C_{\CTtherepI}(d,\alpha,M,T,\beta_1,\beta_2)$ such that for all $a\in (0,M]$ and $(t,x,y,z) \in (0,T]\times \mathbb{R}^d \times \mathbb{R}^d\times \mathbb{R}^d$,
  \begin{equation}\label{equ_3q}
    \int_0^t q_{d,\beta_1}^a(t - s,x,z) q_{d+1,\beta_2}^a(s,z,y) ds \le C_{\CTtherepI} (H^{\gamma}(t,x,z) + H^{\gamma}(t,z,y)) q_{d,\beta_1}^a(t,x,y).
  \end{equation}
  Consequently, there is a positive constant $C_{\CTtherepII}=C_{\CTtherepII}(d,\alpha,M,T)$ such that for all $a\in (0,M]$ and $(t,x,y)\in (0,T]\times \mathbb{R}^d\times \mathbb{R}^d$,
  \begin{equation}\label{equ_3p}
    \int_0^t\int_{\mathbb{R}^d} q_{d,\beta_1}^a(t - s,x,z)|b(z)|q_{d+1,\beta_2}^a(s,z,y)dz ds\le C_{\CTtherepII}M_b(\sqrt{t})q_{d,\beta_1}^a(t,x,y),
  \end{equation}
\end{lem}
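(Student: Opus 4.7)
The consequent inequality \eqref{equ_3p} follows from \eqref{equ_3q} at once: multiplying \eqref{equ_3q} by $|b(z)|$, integrating in $z\in\R^d$, and applying Lemma \ref{thm_HMb} (valid since $\gamma>1/2$) bounds both $\int_{\R^d}|b(z)|H^\gamma(t,x-z)\,dz$ and $\int_{\R^d}|b(z)|H^\gamma(t,z-y)\,dz$ by $cM_b(\sqrt t)$ uniformly in $x,y$. Hence it suffices to prove \eqref{equ_3q}.

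Using \eqref{equ_qeqvlnt} I decompose, up to a universal constant,
\[
q^a_{d,\beta_1}(t-s,x-z)\lesssim g_{d,\beta_1}(t-s,x-z)+\frac{a^\alpha(t-s)}{|x-z|^{d+\alpha}}\mathbf{1}_{\{|x-z|^2\ge t-s\}},
\]
and similarly for $q^a_{d+1,\beta_2}(s,z-y)$. Expanding the product yields four cross terms which I label (GG), (GJ), (JG), (JJ). For (GG) I use the identity $g_{d+1,\beta_2}(s,z-y)=s^{-1/2}g_{d,\beta_2}(s,z-y)$ and apply Lemma \ref{thm_Gaussian3P} to get $c(N^{C_\beta}(t,x-z)+N^{C_\beta}(t,z-y))g_{d,\beta_1}(t,x-y)$. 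Since $\gamma\le 1$, one checks from the explicit form of $H^\beta$ that $H^1\le H^\gamma$, so by \eqref{equ_Nupper} each $N^{C_\beta}\le cH^\gamma$; together with $g_{d,\beta_1}\le q^a_{d,\beta_1}$, this settles (GG).

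For the three remaining cases I split the $s$-integration at $t/2$. On $(0,t/2]$ we have $t-s\asymp t$ and a direct comparison (using $(t-s)^{-d/2}\le 2^{d/2}t^{-d/2}$ and $(t-s)^{-1}\ge t^{-1}$ in the exponential) gives $q^a_{d,\beta_1}(t-s,x-z)\le cq^a_{d,\beta_1}(t,x-z)$, so the integral is dominated by $cq^a_{d,\beta_1}(t,x-z)\cdot\int_0^{t/2}q^a_{d+1,\beta_2}(s,z-y)\,ds$. I then claim the inner integral is bounded by $cH^\gamma(t,z-y)$: the Gaussian part contributes $N^{\beta_2}(t,z-y)\le cH^1\le cH^\gamma$ via \eqref{equ_Nupper}, while the jump part, after using the indicator $\mathbf{1}_{\{|z-y|^2\ge s\}}$, equals $\frac{a^\alpha(|z-y|^2\wedge t/2)^2}{2|z-y|^{d+1+\alpha}}$, and a case analysis on whether $|z-y|^2\gtrless t$ dominates this by $cH^\gamma(t,z-y)$ uniformly in $a\in(0,M]$ and $t\in(0,T]$. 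The half $[t/2,t)$ is handled symmetrically: $s\asymp t$ yields $q^a_{d+1,\beta_2}(s,z-y)\le cq^a_{d+1,\beta_2}(t,z-y)\le ct^{-1/2}q^a_{d,\beta_2}(t,z-y)$ (the last step using $|z-y|\ge\sqrt t$ on the jump support), and after the change of variable $u=t-s$ the leftover integral $\int_0^{t/2}q^a_{d,\beta_1}(u,x-z)\,du\le cH^\gamma(t,x-z)$ by the same scheme.

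To conclude, I invoke the triangle inequality $|x-y|\le |x-z|+|z-y|$, so at least one of $|x-z|\ge|x-y|/2$ or $|z-y|\ge|x-y|/2$ holds. If $|x-z|\ge|x-y|/2$, then $e^{-\beta_1|x-z|^2/t}\le e^{-\beta_1|x-y|^2/(4t)}$ and $|x-z|^{-(d+\alpha)}\le 2^{d+\alpha}|x-y|^{-(d+\alpha)}$ give $q^a_{d,\beta_1}(t,x-z)\le cq^a_{d,\beta_1/4}(t,x-y)$, producing a bound of the required shape $C(H^\gamma(t,x-z)+H^\gamma(t,z-y))q^a_{d,\beta_1/4}(t,x-y)$; the opposite case, and the analogous conversions of $q^a_{d+1,\beta_2}(t,z-y)$ to $q^a_{d,\cdot}(t,x-y)$, proceed identically. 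The principal technical obstacle is the case analysis producing $\int_0^{t/2}q^a_{d+1,\beta_2}(s,z-y)\,ds\le cH^\gamma(t,z-y)$: the precise value $\gamma=(1+\alpha\wedge 1)/2$ is essential, as smaller $\gamma$ fails to dominate the Gaussian tail while larger $\gamma$ causes the jump contribution to diverge as $|z-y|\to 0$.
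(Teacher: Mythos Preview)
Your decomposition into four cross terms and the treatment of the Gaussian--Gaussian piece via Lemma~\ref{thm_Gaussian3P} match the paper. The difficulty is in your endgame: after the $t/2$-split and the triangle inequality you arrive at a bound by $q^a_{d,\beta_1/4}(t,x-y)$, not $q^a_{d,\beta_1}(t,x-y)$. That is strictly weaker than the statement, and the loss is not cosmetic: the lemma is applied iteratively in \eqref{equ_abspk}--\eqref{equ_pkupbd} with the \emph{same} $\beta_1$ at every step, so any degradation of the Gaussian exponent compounds and destroys the uniform-in-$k$ bound that drives the whole construction.

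The paper avoids this degradation by a different organisation of the three non-(GG) terms. Instead of pulling out $q^a_{d,\beta_1}(t,x-z)$ and then converting the $x$--$z$ argument to $x$--$y$ via triangle inequality (which is precisely where you lose the factor $4$ in the exponent), it shows directly that each of $I_2,I_3,I_4$ is bounded by
\[
\Bigl(t^{-d/2}\wedge\frac{a^\alpha t}{|x-y|^{d+\alpha}}\Bigr)\bigl(H^\gamma(t,x,z)+H^\gamma(t,z,y)\bigr),
\]
i.e.\ by the purely polynomial part of $q^a_{d,\beta_1}$ times the $H^\gamma$ factors. The full Gaussian $g_{d,\beta_1}(t,x-y)$ enters only through $I_1$, where Lemma~\ref{thm_Gaussian3P} preserves the exponent exactly. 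For $I_2$ (and symmetrically $I_3$) this is done in two passes: one estimate yields the $t^{-d/2}$ cap directly (your $t/2$-split does this), while a separate case analysis on whether $|x-z|\ge|z-y|$ or the reverse, together with \eqref{equ_Gaussianup} to turn the remaining Gaussian into a polynomial, yields the $\frac{a^\alpha t}{|x-y|^{d+\alpha}}$ bound; taking the minimum gives the claim. For $I_4$ one uses $|x-z|\vee|z-y|\ge\frac12(\sqrt t\vee|x-y|)$ on the double jump support. The point is that the $|x-y|$ dependence is extracted from the polynomial tails, never from a Gaussian, so $\beta_1$ survives intact.
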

\begin{proof}
  We first verify (\ref{equ_3q}). By (\ref{equ_qeqvlnt}), for all $(t,x,y) \in (0,T]\times \mathbb{R}^d \times \mathbb{R}^d$, there is a constant $c_1 = c_1(\alpha,M,T,\beta_1,\beta_2)$ such that
  \begin{eqnarray*}
    I&:=&\int_0^t q_{d,\beta_1}^a(t - s,x,z) q_{d+1,\beta_2}^a(s,z,y) ds\\
    &\stackrel{c_1}{\lesssim}& \int_0^t \!\!\left(g_{d,\beta_1}(t-s,x,z) \!+\! \frac{a^{\alpha}(t-s)}{|x-z|^{d+\alpha}} {\bf 1}_{\{|x-z|^2\ge t-s\}}\right)\!\!\left(\frac{g_{d,\beta_2}(s,z,y)}{s^{1/2}} \!+\! \frac{a^{\alpha}s}{|z-y|^{d+1+\alpha}} {\bf 1}_{\{|z-y|^2\ge s\}}\right) ds\\
    &=& \int_0^t g_{d,\beta_1}(t-s,x,z) \frac{g_{d,\beta_2}(s,z,y)}{s^{1/2}} ds
     + \int_0^t g_{d,\beta_1}(t-s,x,z) \frac{a^{\alpha}s}{|z-y|^{d+1+\alpha}} {\bf 1}_{\{|z-y|^2\ge s\}} ds \\ &&
     + \int_0^t \frac{a^{\alpha}(t-s)}{|x-z|^{d+\alpha}} {\bf 1}_{\{|x-z|^2\ge t-s\}} \frac{g_{d,\beta_2}(s,z,y)}{s^{1/2}} ds\\
    &&+ \int_0^t \frac{a^{\alpha}(t-s)}{|x-z|^{d+\alpha}}{\bf 1}_{\{|x-z|^2\ge t-s\}} \frac{a^{\alpha}s}{|z-y|^{d+1+\alpha}}{\bf 1}_{\{|z-y|^2\ge s\}} ds\\
    &=:& I_1 + I_2+ I_3 + I_4.
  \end{eqnarray*}
  We will treat each term separately. First, by Lemma \ref{thm_Gaussian3P}, there are constants $c_2 = c_2(d,\beta_1/\beta_2)$ and $c_3 = c_3(\beta_2-\beta_1),\beta_1/2)$ such that $I_1 \le c_2 \left( N^{c_3}(t,x,z)+N^{c_3}(t,z,y)\right) g_{d, \beta_1}(t,x,y)$,
  while
  \begin{eqnarray} \label{equ_I2t}
    I_2 &=& \left( \int_0^{t/2} + \int_{t/2}^t \right) g_{d,\beta_1}(t-s,x,z) \frac{a^{\alpha}s}{|z-y|^{d+1+\alpha}} {\bf 1}_{\{|z-y|^2\ge s\}} ds \notag\\
    &\stackrel{2^{d+\alpha} M^\alpha}{\lesssim}& t^{-d/2}\int_0^{(t/2)\wedge |z-y|^2} \frac{s}{|z-y|^{d+1+\alpha}}ds \notag\\
    && + t^{-d/2}{\bf 1}_{\{|z-y|^2\ge t/2\}} \int_0^{t/2} s^{-d/2}\exp\left(-\frac{\beta_1|x-z|^2}{s}\right)\frac{t^{1-\alpha/2}}{|z-y|}ds \notag\\
    &\stackrel{2}{\lesssim}& t^{-d/2} \left(\frac{1}{|z-y|^{d-3+\alpha}}\wedge \frac{t^{2}}{|z-y|^{d+1+\alpha}}\right) \notag\\
    && + t^{-d/2+1-\alpha/2}\int_0^{t/2} s^{-(d+1)/2} \exp\left(-\frac{\beta_1|x-z|^2}{s}\right)ds  \notag\\ &\stackrel{T^{1-\alpha/2}}{\lesssim}& t^{-d/2}\left(\frac{1}{|z-y|^{d-1}}\wedge \frac{t^{(1+\alpha)/2}}{|z-y|^{d+\alpha}}\right)+ t^{-d/2} N^{\beta_1}(t,x,z)\notag\\
    &=& t^{-d/2}\left(N^{\beta_1}(t,x,z) + H^{(1+\alpha)/2}(t,z,y) \right).
  \end{eqnarray}
  On the other hand, if $|x-z| \ge |z-y|$, then  $2|x-z| \ge |x-z|+|z-y|\ge |x-y|$, and so
  \begin{eqnarray}\label{equ_I2zy}
    I_2 &\stackrel{c_4(d,\alpha,\beta_1)}{\lesssim}& \int_0^{t\wedge |z-y|^2} \frac{(t-s)^{\alpha/2}}{|x-z|^{d+\alpha}}\frac{a^\alpha s}{|z-y|^{d+1+\alpha}} ds \notag\\
    &\stackrel{c_5(d,\alpha)}{\lesssim}& \frac{a^\alpha t}{|x-y|^{d+\alpha}} \int_0^{t\wedge |z-y|^2} t^{\alpha/2 -1}\frac{s}{|z-y|^{d+1+\alpha}} ds \notag\\
    &\stackrel{2}{\lesssim}& \frac{a^\alpha t}{|x-y|^{d+\alpha}} t^{\alpha/2-1} \frac{t^2\wedge |z-y|^4}{|z-y|^{d+1+\alpha}} \notag\\
    &\le& \frac{a^\alpha t}{|x-y|^{d+\alpha}} H^{1+\alpha/2}(t,z,y).
  \end{eqnarray}
  If $|x-z|< |z-y|$, then  $2|z-y| \ge |x-y|$  and
  \begin{eqnarray}\label{equ_I2xz}
    I_2 &\stackrel{2^{d+\alpha}}{\lesssim}& \frac{a^\alpha}{|x-y|^{d+\alpha}}\int_0^{t} g_{d,\beta_1}(t-s,x,z)\sqrt{s} ds\notag\\
    &=& \frac{a^\alpha t}{|x-y|^{d+\alpha}} \int_0^{t} \frac{\sqrt{s}\sqrt{t-s}}{t}(t-s)^{-(d+1)/2}\exp\left(-\frac{\beta_1|x-z|^2}{t-s}\right) ds\notag\\
    &\le& \frac{a^\alpha t}{|x-y|^{d+\alpha}} N^{\beta_1}(t,x,z).
  \end{eqnarray}
  Thus we have by \eqref{equ_I2t}-\eqref{equ_I2xz}
  \begin{equation*}
    I_2 \stackrel{c_6(d,\alpha,\beta_1,M,T)}{\lesssim} \left(t^{-d/2}\wedge \frac{a^\alpha t}{|x-y|^{d+\alpha}}\right) \left(N^{\beta_1}(t,x,z) + H^{(1+\alpha)/2}(t,z,y) \right).
  \end{equation*}
 Similarly, we have
  \begin{equation*}
    I_3 \stackrel{c_7(d,\alpha,\beta_2,M,T)}{\lesssim} \left(t^{-d/2}\wedge \frac{a^\alpha t}{|x-y|^{d+\alpha}}\right) \left(H^{1+\alpha/2}(t,x,z) + N^{\beta_2}(t,z,y)\right).
  \end{equation*}
  It remains to estimate $I_4$. If $|x-z|^2 \ge t-s$ and $|z-y|^2 \ge s$, then $ |x-z|\vee |z-y| \ge   \sqrt{t/2}$. Since $|x-z|\vee|z-y| \ge |x-y|/2$, we have $|x-z|\vee|z-y| \ge \frac{1}{2}(\sqrt{t}\vee|x-y|)$. Therefore
  \begin{eqnarray*}
    &&\frac{t-s}{|x-z|^{d+\alpha}}\frac{s}{|z-y|^{d+1+\alpha}}\\
    &=& \frac{1}{(|x-z|\vee|z-y|)^{d+\alpha}} \frac{1}{(|x-z|\wedge |z-y|)^{d+\alpha}} \frac{(t-s)s}{|z-y|}\\
    &\stackrel{2^{d+\alpha}}{\lesssim}& \frac{1}{(\sqrt{t}\vee|x-y|)^{d+\alpha}} \frac{(t-s)s}{(|x-z|\wedge |z-y|)^{d+1+\alpha}} \\
    &\le& \left(t^{-d/2+1-\alpha/2}\wedge \frac{t}{|x-y|^{d+\alpha}}\right) \frac{(t-s)s}{t} \left(\frac{1}{|x-z|^{d+1+\alpha}}+\frac{1}{|z-y|^{d+1+\alpha}}\right)\\
    &\stackrel{(T\vee 1)^{1-\alpha/2}}{\lesssim}& \left(t^{-d/2}\wedge \frac{t}{|x-y|^{d+\alpha}}\right) \left(\frac{t-s}{|x-z|^{d+1+\alpha}}+\frac{s}{|z-y|^{d+1+\alpha}}\right)\\
  \end{eqnarray*}
  Thus,
  \begin{eqnarray}\label{equ_I4_1}
    I_4 &=& a^{2\alpha} \int_0^t {\bf 1}_{\{|x-z|^2 \ge t-s\}} {\bf 1}_{\{|z-y|^2 \ge s\}}\frac{t-s}{|x-z|^{d+\alpha}}\frac{s}{|z-y|^{d+1+\alpha}}ds\notag\\
    &\stackrel{c_8(d,\alpha,T)}{\lesssim}& a^{2\alpha}\left(t^{-d/2}\wedge \frac{t}{|x-y|^{d+\alpha}}\right) \int_0^t {\bf 1}_{\{|x-z|^2 \ge t-s\}} {\bf 1}_{\{|z-y|^2 \ge s\}}\notag\\
    && \times \left(\frac{t-s}{|x-z|^{d+1+\alpha}}+\frac{s}{|z-y|^{d+1+\alpha}}\right)ds\notag\\
    &\stackrel{M^\alpha(M\vee 1)^\alpha}{\lesssim}& \left(t^{-d/2}\wedge \frac{a^{\alpha}t}{|x-y|^{d+\alpha}}\right) \int_0^t {\bf 1}_{\{|x-z|^2 \ge t-s\}} {\bf 1}_{\{|z-y|^2 \ge s\}} \notag\\
    && \times \left(\frac{t-s}{|x-z|^{d+1+\alpha}}+\frac{s}{|z-y|^{d+1+\alpha}}\right)ds.
  \end{eqnarray}
  Notice that
  \begin{eqnarray}\label{equ_I4_2}
    \int_0^t \frac{t-s}{|x-z|^{d+1+\alpha}}{\bf 1}_{\{|x-z|^2 \ge t-s\}} ds
     &=& \frac1{|x-z|^{d+1+\alpha}} \int_0^{t  \wedge |x-z|^2} r \, dr \notag\\
    &\le& 2^{-1} t^{1-\alpha/2} \left(\frac{1}{|x-z|^{d-1}} \wedge \frac{t^{1+\alpha/2}}{|x-z|^{d+1+\alpha}}\right)\notag\\
    &\le& 2^{-1}T^{1-\alpha/2} H^{1+\alpha/2}(t,x,z).
  \end{eqnarray}
  Similarly,
  \begin{equation}\label{equ_I4_3}
    \int_0^t \frac{s}{|z-y|^{d+1+\alpha}}{\bf 1}_{\{|z-y|^2 \ge s\}} ds \le 2^{-1}T^{1-\alpha/2} H^{1+\alpha/2}(t,z,y).
  \end{equation}
  We have by  (\ref{equ_I4_1}), (\ref{equ_I4_2}) and (\ref{equ_I4_3}),
  \begin{equation*}
    I_4 \le c_9(d,\alpha,M,T) \left(t^{-d/2}\wedge \frac{a^{\alpha}t}{|x-y|^{d+\alpha}}\right)\left(H^{1+\alpha/2}(t,x,z)+H^{1+\alpha/2}(t,z,y)\right).
  \end{equation*}
  Hence by (\ref{equ_Nupper}) and the fact that $\beta \mapsto H^\beta(t,x,y)$ is decreasing, we have
  \begin{eqnarray*}
    I &\stackrel{C_{\CTtherepI}(d,\alpha,\beta_1,\beta_2,M,T)}{\lesssim}& \left(H^{\gamma}(t,x,z)+H^{\gamma}(t,z,y)\right)\left(g_{d,\beta_1}(t,x,y) + t^{-d/2}\wedge \frac{a^\alpha t}{|x-y|^{d+\alpha}}\right).
  \end{eqnarray*}
 This completes the proof of (\ref{equ_3q}). Multiplying the both sides of (\ref{equ_3q}) by $|b(z)|$, we get
  \begin{eqnarray*}
    && \int_0^t\int_{\mathbb{R}^d} q_{d,\beta_1}^a(t-s,x,z)|b(z)|q_{d+1,\beta_2}^a(s,z,y) dzds\\
    &\stackrel{C_{\CTtherepI}}{\lesssim}& \int_{\mathbb{R}^d} q_{d,\beta_1}^a(t,x,y) |b(z)|\left(H^{\gamma}(t,x,z) + H^{\gamma}(t,z,y)\right) dz\\
    &\stackrel{2}{\lesssim}& q_{d,\beta_1}^a(t,x,y) \sup_{x\in \mathbb{R}^d} H_b^{\gamma}(t,x)
     \stackrel{C_{\CTHM}}{\lesssim}  M_b(\sqrt{t})q_{d,\beta_1}^a(t,x,y).
  \end{eqnarray*}
 This proves the lemma with $C_{\CTtherepII} = 2C_{\CTtherepI}C_{\CTHM}$.
\end{proof}

For $t > 0$ and $x,y \in \mathbb{R}^d$, we define
\begin{align*}
  |p|^{a,b}_0(t,x,y) &= p^a(t,x,y),\notag \\
  |p|_{k}^{a,b}(t,x,y) &= \int_0^t\int_{\mathbb{R}^d} |p|_{k-1}^{a,b}(t-s,x,z)|b(z)| |\nabla_z p^a(s,z,y)| dz ds,~ \text{ for } k \ge 1.
\end{align*}
For every $M > 0$ and $T > 0$, we can verify by induction that
\begin{equation}\label{equ_abspk}
  |p|_k^{a,b}(t,x,y) \le C_{\CTpaUc}(C_{\CTnablapaU}C_{\CTtherepII}M_b(\sqrt{t}))^k q_{d,C_{\CTpaUexp}/2}^a(t,x,y), ~~a \in (0,M], (t,x,y) \in (0,T]\times\mathbb{R}^d\times\mathbb{R}^d.
\end{equation}
Indeed, (\ref{equ_abspk}) holds for $k=0$. Assume (\ref{equ_abspk}) holds for $k$. Then by assumption and (\ref{equ_3p}),
\begin{eqnarray*}
  |p|_{k+1}^{a,b}(t,x,y)
  &\le& C_{\CTpaUc}(C_{\CTnablapaU}C_{\CTtherepII}M_b(\sqrt{t}))^k C_{\CTnablapaU}\int_0^t \int_{\mathbb{R}^d} q_{d,C_{\CTpaUexp}/2}^a(t-s,x,z)|b(z)|q_{d+1,3C_{\CTpaUexp}/4}^a(s,z,y) dz ds\\
  &\le& C_{\CTpaUc}(C_{\CTnablapaU}C_{\CTtherepII}M_b(\sqrt{t}))^k C_{\CTnablapaU}C_{\CTtherepII} M_b(\sqrt{t}) q_{d,C_{\CTpaUexp}/2}^{a}(t,x,y)\\
  &\le& C_{\CTpaUc}(C_{\CTnablapaU}C_{\CTtherepII}M_b(\sqrt{t}))^{k+1}q_{d,C_{\CTpaUexp}/2}^a(t,x,y).
\end{eqnarray*}
Thus for every $k \ge 1$, $t \in (0,T]$, $p_k^{a,b}(t, x, y)$ of \eqref{equ_pndef} is well defined and has bound
\begin{equation}\label{equ_pkupbd}
  |p_k^{a,b}(t,x,y)| \le |p|_k^{a,b}(t,x,y) \le C_{\CTpaUc}(C_{\CTnablapaU}C_{\CTtherepII}M_b(\sqrt{t}))^k q_{d,C_{\CTpaUexp}/2}^a(t,x,y) < \infty.
\end{equation}

\begin{lem}\label{thm_pkcts}
  Suppose $M > 0$. For every $a \in (0,M]$ and $k\ge 0$, $p_k^{a,b}(t,x,y)$ is jointly continuous on $(0,\infty)\times \mathbb{R}^d\times \mathbb{R}^d$.
\end{lem}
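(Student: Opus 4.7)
The plan is to argue by induction on $k$. The base case $k=0$ is immediate since $p_0^{a,b}(t,x,y) = p^a(t,x-y)$ is the transition density of the L\'evy process $Z^a$ and is smooth on $(0,\infty)\times\mathbb{R}^d\times\mathbb{R}^d$ (e.g.\ by the Fourier representation in (\ref{equ_Xakernel})). For the inductive step, assume $p_{k-1}^{a,b}$ is jointly continuous, fix $(t_0,x_0,y_0)\in (0,\infty)\times\mathbb{R}^d\times\mathbb{R}^d$, and take an arbitrary sequence $(t_n,x_n,y_n)\to(t_0,x_0,y_0)$. Writing
\begin{equation*}
  p_k^{a,b}(t_n,x_n,y_n) = \int_0^{\infty}\!\int_{\mathbb{R}^d} \mathbf{1}_{(0,t_n)}(s)\, p_{k-1}^{a,b}(t_n-s,x_n,z)\, b(z)\, \nabla_z p^a(s,z,y_n)\,dz\,ds,
\end{equation*}
the goal is to pass to the limit inside the integral using dominated convergence.

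Pointwise convergence of the integrand at Lebesgue-almost every $(s,z)\in (0,\infty)\times\mathbb{R}^d$ follows from three facts: $\mathbf{1}_{(0,t_n)}(s)\to \mathbf{1}_{(0,t_0)}(s)$ for every $s\neq t_0$; $p_{k-1}^{a,b}(t_n-s,x_n,z)\to p_{k-1}^{a,b}(t_0-s,x_0,z)$ for each $s\in(0,t_0)$ by the inductive hypothesis, since eventually $t_n-s>0$; and $\nabla_z p^a(s,z,y_n)\to \nabla_z p^a(s,z,y_0)$ by the smoothness of $p^a$ on $(0,\infty)\times\mathbb{R}^d$. Multiplication by the fixed factor $b(z)$ preserves this almost-everywhere pointwise convergence.

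The main obstacle, and the part that needs the most care, is producing a single integrable majorant that dominates the integrand for all $n$ sufficiently large. I will restrict attention to a compact neighborhood $[\tau_1,\tau_2]\times\overline{B(x_0,1)}\times\overline{B(y_0,1)}$ of $(t_0,x_0,y_0)$ with $0<\tau_1<t_0<\tau_2$. By (\ref{equ_pkupbd}) on the interval $(0,\tau_2]$, there is a constant $c$ such that
\begin{equation*}
  |p_{k-1}^{a,b}(t'-s,x',z)| \le c\, q_{d,C_{\CTpaUexp}/2}^{a}(t'-s,x'-z) \quad \text{for all }(t',x')\in[\tau_1,\tau_2]\times\overline{B(x_0,1)},\ s\in(0,t'),
\end{equation*}
while by Theorem \ref{thm_Gradpabound},
\begin{equation*}
  |\nabla_z p^a(s,z,y')| \le C_{\CTnablapaU}\, q_{d+1,3C_{\CTpaUexp}/4}^{a}(s,z-y').
\end{equation*}
A direct comparison of the Gaussian and polynomial ingredients of $q_{d,\beta}^{a}$ shows that for all $(t',x',y')$ in the fixed compact neighborhood, one has $q_{d,\beta_1}^{a}(t'-s,x'-z)\le C_1\, q_{d,\beta_1'}^{a}(\tau_2-s,x_0-z)$ and $q_{d+1,\beta_2}^{a}(s,z-y')\le C_2\, q_{d+1,\beta_2'}^{a}(s,z-y_0)$ for suitable adjusted constants $\beta_1',\beta_2'$ (this is a routine shift of parameters in the Gaussian exponent together with the trivial fact that the tail part $a^{\alpha}t/|\cdot|^{d+\alpha}$ is translation-stable up to constants on the set $|x'-x_0|,|y'-y_0|\le 1$). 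Hence the integrand is dominated by
\begin{equation*}
  F(s,z):= c'\, \mathbf{1}_{(0,\tau_2)}(s)\, q_{d,\beta_1'}^{a}(\tau_2-s,x_0-z)\, |b(z)|\, q_{d+1,\beta_2'}^{a}(s,z-y_0),
\end{equation*}
which, by Lemma \ref{thm_ace} applied with $T=\tau_2$, integrates to a finite multiple of $M_b(\sqrt{\tau_2})\, q_{d,\beta_1'}^{a}(\tau_2,x_0-y_0)<\infty$. Dominated convergence then yields $p_k^{a,b}(t_n,x_n,y_n)\to p_k^{a,b}(t_0,x_0,y_0)$, completing the inductive step.
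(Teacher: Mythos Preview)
Your induction setup and the pointwise-convergence part are fine, but the dominating function you propose does not exist. The claimed inequality
\[
q_{d,\beta_1}^{a}(t'-s,x'-z)\le C_1\, q_{d,\beta_1'}^{a}(\tau_2-s,x_0-z)
\]
fails uniformly in $(t',x')$ with $t'<\tau_2$. Take any $t'\in[\tau_1,\tau_2)$ and let $s\uparrow t'$ while $z=x'$: the left-hand side equals $2(t'-s)^{-d/2}\to\infty$, whereas the right-hand side stays bounded because $\tau_2-s\ge \tau_2-t'>0$. The singularity of $q_{d,\beta_1}^a(t'-s,\cdot)$ sits at $s=t'$, and $t'$ moves with $n$; a majorant whose only time-singularity is at the fixed endpoint $s=\tau_2$ cannot absorb it. The spatial shift from $x'$ to $x_0$ is harmless, but the time shift is not, so there is no single $F(s,z)\in L^1$ dominating all the integrands, and dominated convergence as written does not apply.

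The paper handles exactly this difficulty by splitting the $s$-integral into $(0,\varepsilon)\cup[\varepsilon,t-\varepsilon]\cup(t-\varepsilon,t)$. On the middle piece both factors $p_{k-1}^{a,b}(t-s,x,z)$ and $\nabla_z p^a(s,z,y)$ are bounded (uniformly for $t\in[T^{-1},T]$), so continuity follows from DCT using only local integrability of $b$ and a tail cutoff in $z$. The two boundary pieces are shown to be small \emph{uniformly in $(t,x,y)$} as $\varepsilon\to 0$, using the $|p|_k^{a,b}$ estimate together with the Kato-class control $M_b(\sqrt{\varepsilon})\to 0$; this is where the moving singularity is tamed. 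If you want to repair your argument, you will need an analogous endpoint excision rather than a single global majorant.
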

\begin{proof}
  We will use induction in $k$ to prove this lemma. Obviously,
  $ p_0^{a,b}(t,x,y)=p^a(t, x, y)$
   is jointly continuous on $(0,\infty)\times \mathbb{R}^d\times \mathbb{R}^d$. Assume $p_k^{a,b}(t,x,y)$ is jointly continuous. By (\ref{equ_Gaussianup}) with $\theta = 1$,
  \begin{equation}\label{equ_qup}
    q_{d,C_{\CTpaUexp}/2}^a(t,x,y) \le t^{-d/2}\wedge \frac{c_1 t}{|x-y|^{d+2}} + t^{-d/2} \wedge \frac{a^\alpha t}{|x-y|^{d+\alpha}}, \quad t > 0, x,y\in \mathbb{R}^d,
  \end{equation}
  for some positive constant $c_1$ depending only on $d$. Suppose $T > 1$ and $0<\varepsilon < 1/(2T)$. For $t \in [T^{-1},T]$ and $s\in [\varepsilon, t-\varepsilon]$, we have by (\ref{equ_pkupbd}) and (\ref{equ_qup}) that there is a constant $c_2 = c_2(d,\alpha,M,T,b)$ such that
  \begin{align}
    |p_k^{a,b}(t-s,x,z)| \le&~ C_{\CTpaUc}(C_{\CTnablapaU}C_{\CTtherepII} M_b(\sqrt{t}))^k q_{d,C_{\CTpaUexp}/2}^a(t-s,x,z) \le 2c_2 (t-s)^{-d/2} \le 2c_2\varepsilon^{-d/2},\label{equ_pkup}\\
    |\nabla_z p^a(s,z,y)| \le&~ C_{\CTnablapaU}q_{d+1,3C_{\CTpaUexp}/4}^a(s,z,y) \le 2C_{\CTnablapaU} s^{-(d+1)/2} \le 2C_{\CTnablapaU} \varepsilon^{-(d+1)/2}\label{equ_Gpup}, \\
    |p_k^{a,b}(t-s,x,z)| \le&~ 2c_2\frac{1}{|x-z|^{d+\alpha}}, \quad \text{if } |x-z| \ge 1. \nonumber
  \end{align}
  Then for $R\geq 1$,
  \begin{align*}
  & \sup_{x\in \R^d} \int_\varepsilon^{t-\varepsilon} \int_{|x-z|\ge R} |p_k^{a,b}(t-s,x,z)||b(z)||\nabla_z p^a(s,z,y)| dz ds  \\
  &   \le \sup_{x\in \R^d} 4c_2C_{\CTnablapaU} \varepsilon^{-(d+1)/2} T \int_{|x-z|\ge R} \frac{|b(z)|}{|x-z|^{d+\alpha}} dz,
  \end{align*}
  which goes to zero as $R \rightarrow \infty$. On the other hand, since $x \mapsto p^{a,b}_k(t-s,x,z)$ is continuous by assumption and $p^a(t,x,y)$ is smooth, we have for any $r > 0$, by (\ref{equ_pkup}-\ref{equ_Gpup}), the local integrability of $b$ and the dominated convergence theorem,
  \begin{equation*}
    (x,y) \mapsto \int_\varepsilon^{t-\varepsilon} \int_{|x-z|< R} p_k^{a,b}(t-s,x,z)b(z)\nabla_z p^a(s,z,y) dz ds
  \end{equation*}
  is continuous on $B(0,r)\times B(x,r)$. Thus, we can conclude that
  \begin{equation}\label{equ_equ1}
  (t,x,y) \mapsto \int_{\varepsilon}^{t-\varepsilon} \int_{\mathbb{R}^d} |p_k^{a,b}(t-s,x,z)||b(z)||\nabla_z p^a(s,z,y)| dz ds
  \end{equation}
  is jointly continuous on $[T^{-1},T]\times B(0,r)\times B(0,r)$. Since $r$ is arbitrary, (\ref{equ_equ1}) is jointly continuous on $[T^{-1},T]\times \mathbb{R}^d\times \mathbb{R}^d$. On the other hand, by (\ref{equ_Gpup}) and (\ref{equ_pkupbd}),
  \begin{align*}
    &\sup_{t\in [1/T,T]}\sup_{x,y\in\mathbb{R}^d} \int_{t-\varepsilon}^t \int_{\mathbb{R}^d} |p_k^{a,b}(t-s,x,z)||b(z)||\nabla_z p^a(s,z,y)| dz ds\\
    \le& \sup_{t\in [1/T,T]}\sup_{x\in\mathbb{R}^d} \sup_{s \in [t-\varepsilon,t], z,y\in \mathbb{R}^d}|\nabla_z p^a(s,z,y)| \int_{t-\varepsilon}^t \int_{\mathbb{R}^d} |p_k^{a,b}(t-s,x,z)||b(z)| dz ds\\
    \le& \sup_{s \in [1/(2T),T], z,y\in \mathbb{R}^d}|\nabla_z p^a(s,z,y)| \sup_{x\in\mathbb{R}^d} \int_{0}^\varepsilon \int_{\mathbb{R}^d} |p_k^{a,b}(s,x,z)||b(z)| dz ds\\
    \le& 2C_{\CTnablapaU}(2T)^{(d+1)/2} C_{\CTpaUc}(C_{\CTnablapaU}C_{\CTtherepII}M_b(\sqrt{\varepsilon}))^k\\
    &\qquad\cdot \sup_{x\in \mathbb{R}^d} \bigg(\int_{0}^\varepsilon \int_{\mathbb{R}^d}\!\!\!\! \sqrt{\varepsilon}|b(z)| \frac{g_{d,C_{\CTpaUexp}/2}(s,x,z)}{s^{1/2}} dzds +\!\! \int_{0}^\varepsilon\!\! \int_{|x-z|^2 \ge s} \!\!\!|b(z)| \frac{a^\alpha s}{|x-z|^{d+\alpha}} dz ds\bigg)\\
    \le& 2C_{\CTnablapaU}(2T)^{(d+1)/2}C_{\CTpaUc}(C_{\CTnablapaU}C_{\CTtherepII}M_b(\sqrt{\varepsilon}))^k\\
    &\qquad \cdot \bigg(\sqrt{\varepsilon} \sup_{x\in \mathbb{R}^d} \int_{\mathbb{R}^d} |b(z)| N^{C_{\CTpaUexp}/2}(\varepsilon,x,z) dz +\sup_{x\in \mathbb{R}^d} \int_{\mathbb{R}^d} a^\alpha |b(z)|\frac{\varepsilon^2\wedge |x-z|^4}{|x-z|^{d+\alpha}} dz \bigg)\\
    \le& 2C_{\CTnablapaU}(2T)^{(d+1)/2}C_{\CTpaUc}(C_{\CTnablapaU}C_{\CTtherepII}M_b(\sqrt{\varepsilon}))^k \left(c_3C_{\CTHM}\sqrt{\varepsilon} M_b(\sqrt{\varepsilon}) + a^\alpha \varepsilon^{(3-\alpha)/2} C_{\CTHM}M_b(\sqrt{\varepsilon})\right),
  \end{align*}
  which goes to zero as $\varepsilon \rightarrow 0$. Similarly, by (\ref{equ_pkup}),
  \begin{align*}
    &\sup_{t\in [1/T,T]}\sup_{x,y\in\mathbb{R}^d} \int_0^\varepsilon \int_{\mathbb{R}^d} |p_k^{a,b}(t-s,x,z)||b(z)||\nabla_z p^a(s,z,y)| dz ds\\
    \le& 2c_2(2T)^{d/2}C_{\CTpaUc}(C_{\CTnablapaU}C_{\CTtherepII}M_b(\sqrt{\varepsilon}))^k \left(c_3C_{\CTHM} M_b(\sqrt{\varepsilon}) + a^\alpha \varepsilon^{1-\alpha/2}
    C_{\CTHM}M_b(\sqrt{\varepsilon})\right),
  \end{align*}
  which goes to zero as $\varepsilon \rightarrow 0$. Therefore,
  \begin{equation*}
    p_{k+1}^{a,b}(t,x,y) = \int_{0}^{t} \int_{\mathbb{R}^d} p_k^{a,b}(t-s,x,z) b(z) \nabla_z p^a(s,z,y) dz ds
  \end{equation*}
  is jointly continuous on $[T^{-1},T]\times \mathbb{R}^d\times \mathbb{R}^d$
  for every $T>0$. This completes the proof.
\end{proof}

\const{\CTpnSum}
\begin{lem}\label{thm_pabbounds}
  Suppose $M > 0$. There are two positive constants $t_*(d,\alpha,M,b) > 0$ depending on $b$ only via the rate at which $M_b(r)$ goes to zero and $C_{\CTpnSum} = C_{\CTpnSum}(d,\alpha,M)>0$ such that for all $t \in (0,t_*]$ and $x,y\in \mathbb{R}^d$,
  \begin{align}\label{equ_pabupbd}
    \left|\sum_{k=0}^{\infty} p_k^{a,b}(t,x,y)\right| \le \sum_{k=0}^{\infty} |p_k^{a,b}(t,x,y)| \le C_{\CTpnSum} q_{d,C_{\CTpaUexp}/2}^a(t,x,y).
  \end{align}
  Moreover,  for all $|x-y|^2 < t \leq t_*$,
  \begin{equation}\label{equ_pablow}
    \sum_{k=0}^{\infty} p_k^{a,b}(t,x,y) \ge C_{\CTpnSum}^{-1}t^{-d/2}.
  \end{equation}
\end{lem}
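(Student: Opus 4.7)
The key input for both bounds is the geometric-type estimate \eqref{equ_abspk}, namely
\[
|p|_k^{a,b}(t,x,y) \le C_{\CTpaUc}\bigl(C_{\CTnablapaU}C_{\CTtherepII}M_b(\sqrt{t})\bigr)^k q_{d,C_{\CTpaUexp}/2}^a(t,x,y),
\]
combined with the defining property of $\mathbb{K}_{d,1}$: $M_b(r)\downarrow 0$ as $r\downarrow 0$. The plan is to choose $t_*$ so small that the common ratio $C_{\CTnablapaU}C_{\CTtherepII}M_b(\sqrt{t_*})$ is strictly less than $1/2$; since this depends on $b$ only through an upper bound on $M_b$, the same $t_*$ works for any $\tilde b$ with $M_{\tilde b}\le M_b$, yielding the advertised dependence.

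For the upper bound \eqref{equ_pabupbd}, once $t\le t_*$ is such that $C_{\CTnablapaU}C_{\CTtherepII}M_b(\sqrt t)\le 1/2$, summing the geometric majorant from \eqref{equ_pkupbd} gives
\[
\sum_{k=0}^\infty |p_k^{a,b}(t,x,y)|\le C_{\CTpaUc}\,q_{d,C_{\CTpaUexp}/2}^a(t,x,y)\sum_{k=0}^\infty 2^{-k}=2C_{\CTpaUc}\,q_{d,C_{\CTpaUexp}/2}^a(t,x,y),
\]
which is the desired bound with $C_{\CTpnSum}\ge 2C_{\CTpaUc}$.

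For the lower bound \eqref{equ_pablow}, isolate the zeroth term: $p_0^{a,b}(t,x,y)=p^a(t,x,y)$, and by Theorem \ref{thm_pabound} together with the definition of $q^a_{d,\beta}$ we have, whenever $|x-y|^2<t$,
\[
p^a(t,x,y)\ge C_{\CTpaLc}\,q^a_{d,C_{\CTpaLexp}}(t,x,y)\ge C_{\CTpaLc}\,t^{-d/2}e^{-C_{\CTpaLexp}|x-y|^2/t}\ge C_{\CTpaLc}e^{-C_{\CTpaLexp}}\,t^{-d/2}.
\]
In this same regime $|x-y|^2<t$, the estimate $q_{d,C_{\CTpaUexp}/2}^a(t,x,y)\le 2t^{-d/2}$ holds trivially from its definition. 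Hence, using \eqref{equ_pkupbd} again,
\[
\sum_{k=1}^\infty |p_k^{a,b}(t,x,y)|\le 2C_{\CTpaUc}\,t^{-d/2}\,\frac{C_{\CTnablapaU}C_{\CTtherepII}M_b(\sqrt t)}{1-C_{\CTnablapaU}C_{\CTtherepII}M_b(\sqrt t)}.
\]
Now choose $t_*$ (possibly smaller than the previous one) so that the right-hand side is at most $\tfrac12 C_{\CTpaLc}e^{-C_{\CTpaLexp}}\,t^{-d/2}$; again this only uses the rate at which $M_b(r)\to 0$. Then
\[
\sum_{k=0}^\infty p_k^{a,b}(t,x,y)\ge p^a(t,x,y)-\sum_{k=1}^\infty |p_k^{a,b}(t,x,y)|\ge \tfrac12 C_{\CTpaLc}e^{-C_{\CTpaLexp}}\,t^{-d/2},
\]
so taking $C_{\CTpnSum}$ at least as large as $2/(C_{\CTpaLc}e^{-C_{\CTpaLexp}})$ as well finishes the proof.

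The argument is essentially bookkeeping once \eqref{equ_abspk} is in hand; there is no real obstacle beyond verifying that both defining requirements on $t_*$ depend on $b$ only through $M_b$, which is transparent from the construction. The subtle point worth checking when writing out the details is that in \eqref{equ_pabupbd} the bound $q^a_{d,C_{\CTpaUexp}/2}$ captures both the Gaussian and the jump tail with the correct prefactor, whereas in \eqref{equ_pablow} we only need a Gaussian-type lower bound because the hypothesis $|x-y|^2<t$ localizes to the diagonal regime where the jump component is dominated by the diffusive one.
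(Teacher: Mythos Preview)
Your proposal is correct and follows essentially the same approach as the paper's proof: both choose $t_*$ so that $C_{\CTnablapaU}C_{\CTtherepII}M_b(\sqrt{t_*})$ is small enough (the paper imposes the single condition $C_{\CTnablapaU}C_{\CTtherepII}M_b(\sqrt{t})\le \tfrac12\wedge \tfrac{C_{\CTpaLc}e^{-C_{\CTpaLexp}}}{8C_{\CTpaUc}}$ at once, whereas you split it into two stages), sum the geometric series from \eqref{equ_pkupbd} for the upper bound, and for the lower bound combine $p^a(t,x,y)\ge C_{\CTpaLc}e^{-C_{\CTpaLexp}}t^{-d/2}$ with $q^a_{d,C_{\CTpaUexp}/2}(t,x,y)\le 2t^{-d/2}$ on the diagonal regime. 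The constants and logic match.
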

\begin{proof}
  Let $C_{\CTpaUc}, C_{\CTnablapaU}, C_{\CTtherepII}$ be the constants in (\ref{equ_pkupbd}) with $T = 1$ and $C_{\CTpaLc}, C_{\CTpaLexp}$ be the constants in Theorem \ref{thm_Gradpabound} with $T = 1$. Since $b \in \mathbb{K}_{d,1}$, there is a constant $0< t_* < 1$ such that for all $t \in (0,t_*]$
  \begin{equation*}
    C_{\CTnablapaU}C_{\CTtherepII}M_b(\sqrt{t}) \le \frac 1 2\wedge \frac{C_{\CTpaLc}e^{-C_{\CTpaLexp}}}{8C_{\CTpaUc}},
  \end{equation*}
  and so by (\ref{equ_pkupbd}) with $T = 1$,
  \begin{align}\label{equ_pk1up}
    \sum_{k=1}^{\infty} |p_k^{a,b}(t,x,y)| \le& C_{\CTpaUc} \frac{C_{\CTnablapaU}C_{\CTtherepII}M_b(\sqrt{t})}{1-C_{\CTnablapaU}C_{\CTtherepII}M_b(\sqrt{t})} q_{d,C_{\CTpaUexp}/2}^a(t,x,y)\notag\\
    \le& 2C_{\CTpaUc} C_{\CTnablapaU}C_{\CTtherepII}M_b(\sqrt{t}) q_{d,C_{\CTpaUexp}/2}^a(t,x,y),\quad x,y\in\mathbb{R}^d.
  \end{align}
  Thus, by Theorem \ref{thm_pabound} with $T = 1$ and (\ref{equ_pk1up}), we have for all $(t,x,y)\in (0,t_*]\times\mathbb{R}^d\times\mathbb{R}^d$,
  \begin{equation*}
    \left|\sum_{k=0}^{\infty} p_k^{a,b}(t,x,y)\right| \le \sum_{k=0}^{\infty} |p_k^{a,b}(t,x,y)| \le 2C_{\CTpaUc}q_{d,C_{\CTpaUexp}/2}^a(t,x,y),
  \end{equation*}
  which gives (\ref{equ_pabupbd}). On the other hand, if $|x-y|^2 < t \le t_*$, then
  \begin{equation*}
    p^a(t,x,y) \ge C_{\CTpaLc}e^{-C_{\CTpaLexp}} t^{-d/2}\text{ and } q_{d,C_{\CTpaUexp}/2}^a(t,x,y) \le 2t^{-d/2}.
  \end{equation*}
  Thus, by (\ref{equ_pk1up}) again, we have for $ (t,x,y)\in (0,t_*]\times\mathbb{R}^d\times\mathbb{R}^d$ with $|x-y|^2 \le t$,
  \begin{align*}
    \sum_{k=0}^{\infty} p_k^{a,b}(t,x,y) \ge& p^a(t,x,y) - \sum_{k=1}^{\infty} |p_k^{a,b}(t,x,y)| \ge C_{\CTpaLc}e^{-C_{\CTpaLexp}} t^{-d/2} - \frac{C_{\CTpaLc}e^{-C_{\CTpaLexp}}}{2} t^{-d/2} \\
    =& \frac{C_{\CTpaLc}e^{-C_{\CTpaLexp}}}{2} t^{-d/2} .
     \end{align*}
\end{proof}

In the remainder of this paper, we fix $t_*$. By Lemma \ref{thm_pabbounds}, the series $\sum_{k = 0}^{\infty} p_k^{a,b}(t,x,y)$ absolutely converges on $(0,t_*]\times\mathbb{R}^d\times \mathbb{R}^d$. For every $a \in (0,M]$, define
\begin{equation}\label{equ_pabsmalldef}
  p^{a,b}(t,x,y) = \sum_{k = 0}^{\infty} p_k^{a,b}(t,x,y),\quad 0<t\le t_*\text{ and } x,y \in \mathbb{R}^d.
\end{equation}

\begin{lem}\label{thm_pabctssmall}
  Suppose $M > 0$. For every $a\in (0,M]$, $p^{a,b}(t,x,y)$ is jointly continuous on $(0,t_*]\times\mathbb{R}^d\times \mathbb{R}^d$.
\end{lem}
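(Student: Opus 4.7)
The plan is to apply the Weierstrass $M$-test on compact subsets of $(0,t_*]\times \mathbb{R}^d\times\mathbb{R}^d$ and then invoke the fact that a locally uniform limit of continuous functions is continuous. The two ingredients are already in place: Lemma \ref{thm_pkcts} gives joint continuity of each term $p^{a,b}_k(t,x,y)$, and the bound (\ref{equ_pkupbd}) combined with the choice of $t_*$ made in the proof of Lemma \ref{thm_pabbounds} gives a geometrically decaying dominating series.

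First, by the choice of $t_*$ we have $C_{\CTnablapaU}C_{\CTtherepII}M_b(\sqrt{t}) \le 1/2$ for every $t \in (0,t_*]$, so (\ref{equ_pkupbd}) yields
\begin{equation*}
  |p^{a,b}_k(t,x,y)| \le C_{\CTpaUc}\,2^{-k}\,q^a_{d,C_{\CTpaUexp}/2}(t,x,y)
  \qquad \text{for every } k\ge 0,\ (t,x,y)\in (0,t_*]\times\mathbb{R}^d\times\mathbb{R}^d.
\end{equation*}
Next, fix any compact set $K \subset (0,t_*]\times\mathbb{R}^d\times \mathbb{R}^d$ and choose $\delta>0$ with $t \ge \delta$ on $K$. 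Since $q^a_{d,C_{\CTpaUexp}/2}(t,x,y) \le g_{d,C_{\CTpaUexp}/2}(t,x-y)+t^{-d/2} \le 2t^{-d/2} \le 2\delta^{-d/2}$, the $k$-th summand is dominated on $K$ by $2C_{\CTpaUc}\delta^{-d/2}\cdot 2^{-k}$, whose sum over $k$ converges. By the Weierstrass $M$-test, $\sum_{k=0}^\infty p^{a,b}_k$ converges uniformly on $K$.

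Each partial sum $\sum_{k=0}^N p^{a,b}_k$ is jointly continuous on $(0,\infty)\times\mathbb{R}^d\times\mathbb{R}^d$ by Lemma \ref{thm_pkcts}, so the uniform limit $p^{a,b}(t,x,y)$ is jointly continuous on $K$. Since every point of $(0,t_*]\times\mathbb{R}^d\times\mathbb{R}^d$ lies in some such compact set, joint continuity holds on the whole domain, completing the proof. There is essentially no obstacle here; the only subtlety is noting that although $q^a_{d,C_{\CTpaUexp}/2}$ is not globally bounded in $(x,y)$, it is bounded on any set where $t$ is bounded away from $0$ (thanks to the $t^{-d/2}\wedge$ truncation in the polynomial part), which is precisely what a compact-set argument requires.
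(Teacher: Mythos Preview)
Your proposal is correct and follows essentially the same approach as the paper: bound $q^a_{d,C_{\CTpaUexp}/2}(t,x,y)\le 2t^{-d/2}$ away from $t=0$, use the geometric decay \eqref{equ_pkupbd} (with the choice of $t_*$) to get uniform convergence of the series there, and conclude via Lemma~\ref{thm_pkcts}. The only cosmetic difference is that the paper works directly on the slab $[t_1,t_*]\times\mathbb{R}^d\times\mathbb{R}^d$ for arbitrary $t_1>0$ rather than on a general compact set, which is in fact slightly stronger but amounts to the same argument.
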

\begin{proof}
  For any $0 < t_1 < t_*$, we have
  \begin{equation*}
    \sup_{[t_1,t_*]\times\mathbb{R}^d\times \mathbb{R}^d} q_{d,C_{\CTpaUexp}/2}^a(t,x,y) \le 2t_1^{-d/2}< \infty.
  \end{equation*}
  By Lemma \ref{thm_pabbounds} and inequality (\ref{equ_pkupbd}), the series $\sum_{k = 0}^{\infty} p_k^{a,b}(t,x,y)$ converges uniformly on $[t_1,t_*]\times\mathbb{R}^d\times \mathbb{R}^d$. Since $t_1$ is arbitrary, the result follows from Lemma \ref{thm_pkcts}.
\end{proof}

\begin{thm}\label{thm_psmallsmgrp}
  Suppose $M > 0$. For every $a\in (0,M]$, $0 < s,t \le t_*$ with $s+t \le t_*$ and $x,y \in \mathbb{R}^d$, we have
  \begin{equation}\label{equ_CKsmall}
    p^{a,b}(t+s,x,y) = \int_{\mathbb{R}^d} p^{a,b}(t,x,z)p^{a,b}(s,z,y)dz.
  \end{equation}
\end{thm}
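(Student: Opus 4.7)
My plan is to first prove the identity
\begin{equation}\label{e:planA}
  p_n^{a,b}(t+s,x,y) = \sum_{k=0}^n \int_{\mathbb{R}^d} p_k^{a,b}(t,x,z)\, p_{n-k}^{a,b}(s,z,y)\, dz
\end{equation}
for all $n \ge 0$, $t,s>0$ with $t+s \le t_*$, and then to sum \eqref{e:planA} in $n$ and interchange sum and integral. The base case $n=0$ is simply the Chapman--Kolmogorov identity for $p^a$, which holds because $Z^a$ is a L\'evy process. For the inductive step I would split the time integral defining $p_n^{a,b}(t+s,x,y)$ at $u=s$, writing $\int_0^{t+s} = \int_0^s + \int_s^{t+s}$.

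On the interval $[s,t+s]$, after the change of variables $u = v+s$, I would use the Chapman--Kolmogorov identity for $p^a$ and differentiation under the integral sign to write $\nabla_z p^a(v+s,z,y) = \int_{\mathbb{R}^d} \nabla_z p^a(v,z,w)\, p^a(s,w,y)\, dw$; the dominated convergence needed to justify this follows from the pointwise gradient bound $|\nabla_z p^a(v,z,w)| \le C_{\CTnablapaU}\, q_{d+1,3C_{\CTpaUexp}/4}^a(v,z,w)$ of Theorem \ref{thm_Gradpabound}. A Fubini interchange, valid because the integrand is bounded pointwise by a product of $q^a$ and $q^a_{d+1}$ kernels that are jointly integrable on $[0,t]\times \mathbb{R}^d \times \mathbb{R}^d$ by Lemma \ref{thm_ace}, gives
\begin{equation*}
  \int_s^{t+s}\!\int_{\mathbb{R}^d} p_{n-1}^{a,b}(t+s-u,x,z)\, b(z)\, \nabla_z p^a(u,z,y)\, dz\, du \;=\; \int_{\mathbb{R}^d} p_n^{a,b}(t,x,w)\, p^a(s,w,y)\, dw,
\end{equation*}
i.e.\ the $k=n$ term of \eqref{e:planA}. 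On $[0,s]$, the inductive hypothesis applied to $p_{n-1}^{a,b}(t+(s-u),x,z)$, combined with another Fubini and the definition \eqref{equ_pndef} of $p_{n-k}^{a,b}$, yields the remaining terms $k=0,\ldots,n-1$ of \eqref{e:planA}.

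For Step 2, I would sum \eqref{e:planA} over $n \ge 0$. On the left side the series converges absolutely to $p^{a,b}(t+s,x,y)$ by Lemma \ref{thm_pabbounds}. For the right side, the uniform geometric bound \eqref{equ_pkupbd} with $C_{\CTnablapaU} C_{\CTtherepII} M_b(\sqrt{t_*}) \le 1/2$ gives
\begin{equation*}
  \sum_{k,j \ge 0} |p_k^{a,b}(t,x,z)|\,|p_j^{a,b}(s,z,y)| \;\le\; C_{\CTpnSum}^2\, q_{d,C_{\CTpaUexp}/2}^a(t,x,z)\, q_{d,C_{\CTpaUexp}/2}^a(s,z,y),
\end{equation*}
and the integral of the right-hand side in $z$ is finite and locally bounded by a direct estimate (e.g.\ via \eqref{equ_qeqvlnt} and a term-by-term bound: the Gaussian--Gaussian, Gaussian--polynomial, and polynomial--polynomial convolutions are all finite for $t,s>0$). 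This justifies Tonelli, after which Fubini converts the double sum into $\int p^{a,b}(t,x,z)\, p^{a,b}(s,z,y)\, dz$, proving \eqref{equ_CKsmall}.

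The main obstacle I anticipate is bookkeeping in the inductive step, particularly cleanly justifying $\nabla_z p^a(v+s,z,y) = \int \nabla_z p^a(v,z,w)\, p^a(s,w,y)\,dw$ together with the Fubini interchange that lets the $p^a(s,w,y)$ factor escape from the inner time integral. Both rely essentially on Lemma \ref{thm_ace} (which provides the 3P-style integrability) and on the smoothness and pointwise gradient bound for $p^a$ from Theorem \ref{thm_Gradpabound}; given these, the argument is mechanical.
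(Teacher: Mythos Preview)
Your proposal is correct and follows essentially the same route as the paper: prove the identity \eqref{e:planA} (which is exactly the paper's intermediate formula \eqref{equ_pksmgrp}) by induction, splitting $\int_0^{t+s}$ at $s$, handling the $[s,t+s]$ piece via the Chapman--Kolmogorov identity for $p^a$ together with differentiation under the integral (justified by Theorem~\ref{thm_Gradpabound} and dominated convergence), handling the $[0,s]$ piece via the inductive hypothesis, and invoking Lemma~\ref{thm_ace} for the Fubini steps. The final summation and interchange are exactly as you describe.
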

\begin{proof}
  Note that for $s,t > 0$ with $s+t\le t_*$,
  \begin{align*}
    p^{a,b}(t,x,z)p^{a,b}(s,z,y) &= \left(\sum_{m=0}^{\infty} p^{a,b}_m(t,x,z)\right) \left(\sum_{k=0}^{\infty} p^{a,b}_k(s,z,y)\right)\\
    &= \sum_{k=0}^{\infty} \sum_{m=0}^{k} p^{a,b}_m(t,x,z) p^{a,b}_{k-m}(s,z,y).
  \end{align*}
  So it suffices to prove that for any $k\ge 0$,
  \begin{equation}\label{equ_pksmgrp}
    p_k^{a,b}(t+s,x,y) = \sum_{m=0}^{k} \int_{\mathbb{R}^d} p_m^{a,b}(t,x,z) p_{k-m}^{a,b}(s,z,y) dz,
  \end{equation}
 which will be done inductively. When $k=0$, (\ref{equ_pksmgrp}) is clearly true since $p^{a,b}_0(t,x,y) = p^a(t,x,y)$. Suppose (\ref{equ_pksmgrp}) holds when $k = l$ and we have
  \begin{align*}
    p^{a,b}_{l+1}(t+s,x,y) =& \int_0^{t+s}\int_{\mathbb{R}^d} p^{a,b}_l(t+s-\tau,x,w)b(w)\nabla_w p^{a,b}_0(\tau,w,y)dw d\tau\\
    =& \left( \int_0^s +   \int_s^{t+s}   \right) \int_{\mathbb{R}^d} p^{a,b}_l(t+s-\tau,x,w)b(w)\nabla_w p^{a,b}_0(\tau,w,y) dwd\tau \\
    =& \int_0^s \int_{\mathbb{R}^d} \sum_{m=0}^{l} \int_{\mathbb{R}^d} p^{a,b}_m(t,x,z)p^{a,b}_{l-m}(s-\tau,z,w)dzb(w) \nabla_w p^{a,b}_0(\tau,w,y)dw d\tau\\
    &+ \int_s^{t+s}\!\!\!\int_{\mathbb{R}^d} p^{a,b}_l(t+s-\tau,x,w)b(w) \nabla_w\!\!\!\int_{\mathbb{R}^d} p^{a,b}_0(\tau-s,w,z) p^{a,b}_0(s,z,y)dz dw d\tau\\
    =& \sum_{m=0}^{l} \int_{\mathbb{R}^d} p^{a,b}_m(t,x,z)\int_0^s \int_{\mathbb{R}^d} p^{a,b}_{l-m}(s-\tau,z,w)b(w) \nabla_w p^{a,b}_0(\tau,w,y)dw d\tau dz\\
    &+ \int_{\mathbb{R}^d} \int_0^{t}\int_{\mathbb{R}^d} p^{a,b}_l(t-\tau,x,w)b(w) \nabla_w p^{a,b}_0(\tau,w,z)dw d\tau p^{a,b}_0(s,z,y)dz \\
    =& \sum_{m=0}^{l} \int_{\mathbb{R}^d} p^{a,b}_m(t,x,z) p^{a,b}_{l+1-m}(s,z,y) dz + \int_{\mathbb{R}^d} p^{a,b}_{l+1}(t,x,z)p^{a,b}_0(s,z,y)dz ,
  \end{align*}
where in the second to the last equality, we used Fubini's theorem since
 for  $(t,x,y)\in (0,t_*]\times\mathbb{R}^d\times\mathbb{R}^d$ and any $m,l\in \mathbb{Z}_+$, by (\ref{equ_abspk}) and Lemma \ref{thm_ace},
  \begin{align*}
    &\int_0^s \int_{\mathbb{R}^d} \int_{\mathbb{R}^d} |p^{a,b}_m(t,x,w)||p^{a,b}_l(s-\tau,w,z)||b(z)||\nabla_zp^a(\tau,z,y)dwdzd\tau \notag\\
    =& \int_{\mathbb{R}^d} |p^{a,b}_m(t,x,w)| \int_0^s \int_{\mathbb{R}^d}|p^{a,b}_l(s-\tau,w,z)||b(z)||\nabla_zp^a(\tau,z,y)dzd\tau dw\notag\\
    \le& \int_{\mathbb{R}^d} |p^{a,b}_m(t,x,w)||p^{a,b}|_{l+1}(s,w,y)dw < \infty.
  \end{align*}
  We have also used the fact that due to Theorem \ref{thm_pabound} and the dominated convergence theorem,
   $ \nabla_zp^a(\tau,z,y) = \int_{\mathbb{R}^d} \nabla_zp^a(\tau-s,z,w) p^a(s,w,y)dw$.
\end{proof}

In view of Theorem  \ref{thm_psmallsmgrp}, the definition of $p^{a,b}(t,x,y)$ can be uniquely extended to all $t > 0$ so that \eqref{e:1.8} holds for all $s, t>0$. Suppose $p^{a,b}(t,x,y)$ has been well defined on $(0, kt_*]\times \mathbb{R}^d\times \mathbb{R}^d$ for integer $k\geq  0$ and
\eqref{e:1.8} holds for all $s, t>0$ with $s+t\leq kt_*$.
For $t \in (kt_*, (k+1) t_*]$, we define
\begin{equation}\label{equ_pablargedef}
  p^{a,b}(t,x,y) = \int_{\mathbb{R}^d} p^{a,b}(kt_*,x,z)p^{a,b} (t-kt_*,z,y)dz, \quad x,y\in \mathbb{R}^d.
\end{equation}
One can verify easily that the Chapman-Kolmogorov equation \eqref{e:1.8}
holds for every $t, s >0$ with $t+s \leq (k+1)t_*$.
This proves that \eqref{e:1.8} holds for all $t, s>0$.

\begin{thm}\label{thm_pabctscsvtv}
  Suppose $M > 0$. For every $a \in (0,M]$, $p^{a,b}(t,x,y)$ is continuous on $(0,\infty)\times \mathbb{R}^d \times \mathbb{R}^d$ and $\int_{\mathbb{R}^d} p^{a,b}(t,x,y) dy = 1$ for every $t> 0$ and $x\in \mathbb{R}^d$.
\end{thm}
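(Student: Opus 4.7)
The plan is to establish joint continuity and the conservativeness identity $\int p^{a,b}(t,x,y)\,dy = 1$ separately, each by induction on the intervals $I_k := (kt_*, (k+1)t_*]$ for $k \ge 0$.

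For joint continuity, the base case $k=0$ is Lemma \ref{thm_pabctssmall}. For the inductive step I assume that on $(0, kt_*]$ both joint continuity holds and an upper bound $|p^{a,b}(s,x,y)| \le c_k\, q^a_{d,\beta_k}(s, x-y)$ is available. The extension formula (\ref{equ_pablargedef}) writes $p^{a,b}(t,x,y)$ for $t \in I_k$ as a convolution of two such bounded continuous functions; joint continuity on $I_k$ then follows by dominated convergence, and the same convolution simultaneously yields the propagated bound on $I_{k+1}$. Both conclusions rely on the estimate
\[
\int_{\R^d} q^a_{d,\beta_1}(t_1, x-z)\, q^a_{d,\beta_2}(t_2, z-y)\, dz \;\le\; c\, q^a_{d,\beta'}(t_1+t_2, x-y),
\]
which follows from Theorem \ref{thm_pabound} (giving $q^a_{d,\beta} \asymp p^a$ after adjusting the constants) combined with the Chapman--Kolmogorov identity $p^a(t_1,\cdot) * p^a(t_2,\cdot) = p^a(t_1+t_2,\cdot)$ for the L\'evy density $p^a$.

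For conservativeness on $(0, t_*]$, Lemma \ref{thm_pabbounds} provides a $y$-integrable majorant for the defining series, so
\[
\int_{\R^d} p^{a,b}(t,x,y)\, dy \;=\; \sum_{k=0}^\infty \int_{\R^d} p_k^{a,b}(t,x,y)\, dy.
\]
The $k=0$ term equals $1$ since $p^a$ is a L\'evy transition density. For $k \ge 1$, Fubini's theorem (justified by the bound (\ref{equ_abspk}) together with $\int_{\R^d} q^a_{d+1,\beta}(s, w)\, dw \lesssim s^{-1/2}$, which is integrable in $s$ on $(0,t]$) gives
\[
\int_{\R^d} p_k^{a,b}(t,x,y)\, dy \;=\; \int_0^t \int_{\R^d} p_{k-1}^{a,b}(t-s,x,z)\, b(z) \cdot \Bigl(\int_{\R^d} \nabla_z p^a(s,z,y)\, dy\Bigr)\, dz\, ds.
\]
The inner $dy$-integral vanishes: by the explicit formula (\ref{equ_ghk}), $\nabla_z p^a(s,z,y) = -2\pi (z-y)\, p^a_{d+2}(s, z-y)$, and the rotational symmetry of $p^a_{d+2}(s,\cdot)$ makes $w \mapsto w\, p^a_{d+2}(s,w)$ odd, so its integral over $\R^d$ is zero. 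Hence $\int p_k^{a,b}(t,x,y)\, dy = 0$ for every $k \ge 1$, and conservativeness on $(0, t_*]$ follows. For $t \in I_k$ with $k \ge 1$, the extension formula (\ref{equ_pablargedef}) together with Fubini (legitimate thanks to the propagated upper bound from the continuity induction above) reduces the claim to the inductive hypothesis:
\[
\int_{\R^d} p^{a,b}(t,x,y)\, dy \;=\; \int_{\R^d} p^{a,b}(kt_*, x, z) \int_{\R^d} p^{a,b}(t-kt_*, z, y)\, dy\, dz \;=\; 1.
\]

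The main obstacle is the upper-bound bookkeeping across the induction on the intervals $I_k$, in particular the convolution estimate for the $q^a_{d,\beta}$ kernels displayed above, which underpins both the dominated-convergence argument for continuity and the Fubini justification used to iterate conservativeness beyond $(0,t_*]$. Once this estimate is in hand the remaining steps are routine, the algebraic heart of the matter being the single identity $\int \nabla_z p^a(s,z,y)\,dy = 0$ supplied by the symmetry of $p^a_{d+2}$.
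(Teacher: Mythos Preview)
Your proposal is correct and follows essentially the same route as the paper. The paper's proof is terser---it dispatches continuity in one sentence citing Lemma \ref{thm_pabctssmall}, the extension formula (\ref{equ_pablargedef}), and dominated convergence, leaving the propagation of upper bounds implicit---while you spell out the induction on the intervals $I_k$ and the convolution estimate for the $q^a_{d,\beta}$ kernels (the paper carries out this very convolution trick explicitly only later, in Theorem \ref{thm_pababsup}). For conservativeness the arguments are identical: the paper also invokes (\ref{equ_ghk}) to get $\int_{\R^d}\nabla_z p^a(s,z,y)\,dy=0$, uses Fubini (citing Lemma \ref{thm_ace} and (\ref{equ_pkupbd}) rather than your direct $\int q^a_{d+1,\beta}(s,\cdot)\lesssim s^{-1/2}$ bound, but either works) to kill each $k\ge 1$ term, sums via dominated convergence, and extends by (\ref{equ_pablargedef}).
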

\begin{proof}
  The continuity of $p^{a,b}(t,x,y)$ for all $t > 0$ follows from Lemma \ref{thm_pabctssmall}, (\ref{equ_pablargedef}) and the dominated convergence theorem.

  It follows from (\ref{equ_ghk}) that $\int_{\mathbb{R}^d} \nabla_x p^a(t,x,y) dy =0$ for all $t > 0$ and $x\in \mathbb{R}^d$. Thus for every $k \ge 1$, by Lemma \ref{thm_ace}, \eqref{equ_pndef}, (\ref{equ_pkupbd}) and Fubini's theorem,
  \begin{align*}
    \int_{\mathbb{R}^d} p_k^{a,b}(t,x,y) dy &= \int_{\mathbb{R}^d} \int_{\mathbb{R}^d} \int_{0}^t p_{k-1}^{a,b}(t-s,x,z) b(z) \cdot \nabla_z p^a(s,z,y) dsdzdy\\
    &= \int_{\mathbb{R}^d} \int_0^t p_{k-1}^{a,b}(t-s,x,z) b(z) \cdot \int_{\mathbb{R}^d} \nabla_z p^a(s,z,y) dy ds dz = 0.
  \end{align*}
   In view of (\ref{equ_pkupbd}) and the dominated convergence theorem, we have for all $t\in (0,t_*]$ and $x \in \mathbb{R}^d$,
  \begin{equation*}
    \int_{\mathbb{R}^d} p^{a,b}(t,x,y)dy = \sum_{k=0}^{\infty} \int_{\mathbb{R}^d} p_k^{a,b}(t,x,y)dy = \int_{\mathbb{R}^d} p^a(t,x,y)dy = 1,
  \end{equation*}
  which extends to all $t > 0$ by (\ref{equ_pablargedef}).
\end{proof}

For bounded measurable function $f$ on $\mathbb{R}^d$, $t > 0$ and $x\in \mathbb{R}^d$, we define  operator $P^{a,b}_t$
\begin{equation*}
  P^{a,b}_t f(x) = \int_{\mathbb{R}^d} p^{a,b}(t,x,y) f(y) dy.
\end{equation*}
It follows from \eqref{equ_CKsmall} that $P^{a,b}_sP^{a,b}_t = P^{a,b}_{t+s}$.

The following theorem tells us that the generator of $\{P_t^{a,b},t\ge 0\}$ is $ {\mathcal{L}}^{a,b}$ in the weak sense. The proof is almost the same to part of the proof of \cite[Theorem 1]{BogdanJakubowski.2007}. We give the details of the proof for completeness.
For any compact set $K \subset \mathbb{R}^d$ and $r > 0$, let $K^r = \{y\in \mathbb{R}^d: \exists x\in K \text{ such that } |x-y| < r\}$ be the $r$-neighborhood of $K$.
\begin{thm}\label{thm_generator}
  Suppose $M > 0$. For every $a \in (0,M]$ and  for all $f\in C_c^{\infty}(\mathbb{R}^d)$, $g \in C_\infty(\mathbb{R}^d)$,
  \begin{equation*}
    \lim_{t\rightarrow 0}\int_{\mathbb{R}^d} \frac{P^{a,b}_tf(x) - f(x)}{t} g(x) dx = \int_{\mathbb{R}^d} \mathcal{L}^{a,b}f(x) g(x) dx.
  \end{equation*}
\end{thm}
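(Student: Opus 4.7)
The idea is to reduce the statement via the Duhamel formula to two simpler limits, one involving only the Lévy semigroup $\{P^a_t\}$ and the other being a first-order approximation for the drift. For $f \in C_c^\infty(\R^d)$, integrate \eqref{equ_Duhamel} against $f(y)$: using the dominated convergence argument from the proof of Theorem \ref{thm_pabctscsvtv} together with $\int_{\R^d} \nabla_z p^a(s,z,y) f(y)\,dy = \nabla (P^a_s f)(z)$, one obtains
\begin{equation*}
  P^{a,b}_t f(x) - f(x) = \bigl(P^a_t f(x) - f(x)\bigr) + \int_0^t P^{a,b}_{t-s}\bigl(b\cdot \nabla P^a_s f\bigr)(x)\, ds, \quad t\in (0,t_*].
\end{equation*}
Multiplying by $g(x)$ and integrating, it suffices to verify
\begin{equation*}
  \lim_{t\to 0}\int g(x)\,\frac{P^a_t f(x)-f(x)}{t}\,dx = \int g(x)\,\mathcal{L}^a f(x)\,dx
\end{equation*}
and
\begin{equation*}
  \lim_{t\to 0}\frac{1}{t}\int_0^t\!\!\int g(x)\, P^{a,b}_{t-s}(b\cdot \nabla P^a_s f)(x)\,dx\,ds = \int g(x)\,b(x)\cdot \nabla f(x)\,dx.
\end{equation*}

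For the Lévy part, $\{P^a_t\}$ is the Feller semigroup of the Lévy process $Z^a$ with generator $\mathcal{L}^a = \Delta + a^\alpha \Delta^{\alpha/2}$ having $C_c^\infty(\R^d)\subset \mathrm{Dom}(\mathcal{L}^a)$ (a direct Fourier-analytic check using \eqref{equ_Xakernel}), hence $(P^a_t f - f)/t \to \mathcal{L}^a f$ uniformly on $\R^d$, with uniform bound $\|\mathcal{L}^a f\|_\infty$ for $t\in (0,1]$. For $|x|\ge 2R$ with $R$ large enough that $\mathrm{supp}(f)\subset B(0,R)$, $(P^a_t f - f)(x)/t = (1/t)\int p^a(t,x,y) f(y)\,dy$, and Theorem \ref{thm_pabound} combined with $p^a(t,x,y)\leq C a^\alpha t/|x-y|^{d+\alpha}$ gives $|(P^a_t f - f)(x)/t|\leq C/|x|^{d+\alpha}$, uniformly in $t\in (0,1]$. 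This envelope is integrable against bounded $g\in C_\infty(\R^d)$, and dominated convergence yields the first limit.

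For the drift part, Fubini (justified by Lemma \ref{thm_ace} and the bounds of Lemma \ref{thm_pabbounds}) together with $\nabla P^a_s f = P^a_s(\nabla f)$ (since $Z^a$ is a Lévy process) rewrites the pre-limit expression as
\begin{equation*}
  \frac{1}{t}\int_0^t \int \tilde g_{t-s}(z)\, b(z)\cdot P^a_s(\nabla f)(z)\,dz\,ds,\qquad \tilde g_r(z):=\int g(x)\, p^{a,b}(r,x,z)\,dx.
\end{equation*}
Three facts drive the convergence. First, by $p^{a,b}\leq C q_d^a$ (Lemma \ref{thm_pabbounds}) and the translation invariance of $q_d^a$, $\tilde g_r\in C_\infty(\R^d)$ with $\|\tilde g_r\|_\infty\leq C\|g\|_\infty$, and $\tilde g_r \to g$ uniformly as $r\to 0$ (standard approximate-identity argument using $\int_{|y|>\delta} q_d^a(r,y)\,dy\to 0$). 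Second, $P^a_s(\nabla f)\to \nabla f$ uniformly as $s\to 0$ by the Feller property applied to $\nabla f\in C_c^\infty\subset C_\infty$, together with the uniform tail bound $|P^a_s(\nabla f)(z)|\leq C a^\alpha s/(1+|z|)^{d+\alpha}$ for $|z|\ge 2R$ where $\mathrm{supp}(\nabla f)\subset B(0,R)$; this follows from the polynomial bound of $p^a$ in Theorem \ref{thm_pabound}. Third, splitting the $z$-integral into $B(0,2R)$ and its complement, the inner region converges by dominated convergence (local integrability of $b$) to $\int g(z)\,b(z)\cdot \nabla f(z)\,dz$, while the outer region contributes at most $C\|g\|_\infty t \int_{|z|>2R}|b(z)|/(1+|z|)^{d+\alpha}\,dz$ after time-averaging the extra factor of $s$, which vanishes as $t\to 0$ provided the weighted $z$-integral is finite; the latter is handled by combining the Kato-norm estimates of Lemma \ref{thm_HMb} with the decay furnished by the vanishing of $g$ at infinity and the polynomial tail of $P^a_s \nabla f$.

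The hardest step is the tail control in the drift part: one must reconcile possibly singular and/or mildly growing Kato-class drift $b$ with the merely-bounded test function $g\in C_\infty$. The compact support of $\nabla f$ is essential---it localizes the limiting integral to a bounded set where $b$ is just integrable---while the polynomial decay of $P^a_s(\nabla f)$ off that support, together with the vanishing of $g$ at infinity (hence of $\tilde g_{t-s}$ at infinity), suppresses the outer contribution. The same architecture is used in \cite[Theorem~1]{BogdanJakubowski.2007}, but the coexistence of diffusive and jumping components in $Z^a$ means that both the Gaussian bounds and the polynomial $a^\alpha t/|x-y|^{d+\alpha}$ bounds of Theorems \ref{thm_pabound} and \ref{thm_Gradpabound} must be used in tandem, together with the 3P-style estimate of Lemma \ref{thm_ace}.
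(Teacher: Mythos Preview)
Your proof is correct and, while resting on the same estimates as the paper's, is organized differently. The paper decomposes $p^{a,b} = p^a + p_1^{a,b} + \sum_{k\ge 2} p_k^{a,b}$ and treats the three pieces separately: the $k=1$ term is analyzed by a three-region split $J_1+J_2+J_3$ of the triple $(x,y,z)$-integral, and the $k\ge 2$ tail is dispatched using the extra factor $M_b(\sqrt t)$ from \eqref{equ_pk1up}. You instead keep the full Duhamel remainder, integrate out $x$ to form $\tilde g_{t-s}$ and $y$ to form $P^a_s(\nabla f)$, and reduce everything to two uniform-convergence statements plus a $z$-split near/away from $\mathrm{supp}(\nabla f)$. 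Your route is slightly more streamlined (no separate $k\ge 2$ step) and makes the perturbative structure more transparent; the paper's route is more hands-on and avoids introducing the adjoint-type quantity $\tilde g_r$.

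One point deserves tightening. Your claim that $\tilde g_r \to g$ uniformly is not quite a ``standard approximate-identity argument'': at this stage of the paper $p^{a,b}$ is not yet known to be nonnegative, and $\int_{\R^d} p^{a,b}(r,x,z)\,dx$ (integration in the \emph{first} variable) is not the conservativeness of Theorem~\ref{thm_pabctscsvtv}. The fix is one line: write $p^{a,b}=p^a+\sum_{k\ge 1}p_k^{a,b}$; then $\int g(x)\,p^a(r,x,z)\,dx = P^a_r g(z)\to g(z)$ uniformly by the Feller property of $Z^a$ and the symmetry of $p^a$, while $\int |g(x)|\sum_{k\ge 1}|p_k^{a,b}(r,x,z)|\,dx \le C\|g\|_\infty M_b(\sqrt r)\to 0$ by \eqref{equ_pk1up} and the translation invariance of $q^a_{d,\beta}$. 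Likewise, for the outer-region bound you do not need the vanishing of $g$ at infinity: the finiteness of $\int_{|z|>2R}|b(z)|\,|z|^{-d-\alpha}\,dz$ follows directly from Lemma~\ref{thm_HMb} with $\beta=(1+\alpha)/2$ (take $x=0$ and read off the tail part of $H^\beta_b$).
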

\begin{proof}
  Note that for all $t \in (0,t_*]$,
  \begin{align*}
    \int_{\mathbb{R}^d} \frac{P^{a,b}_tf(x) - f(x)}{t} g(x) dx =& \frac{1}{t}\int_{\mathbb{R}^d}\left(\int_{\mathbb{R}^d}p^{a,b}_0(t,x,y) f(y) dy - f(x)\right) g(x) dx \\
    &+ \frac{1}{t}\int_{\mathbb{R}^d}\int_{\mathbb{R}^d} \left(p^{a,b}_1(t,x,y) + \sum_{k = 2}^{\infty}p^{a,b}_k(t,x,y) \right) f(y) g(x) dy dx.
  \end{align*}
  Since $p^{a,b}_0(t,x,y) = p^a(t,x,y)$ we have
  \begin{equation*}
    \lim_{t \rightarrow 0} \frac{1}{t}\int_{\mathbb{R}^d}\left(\int_{\mathbb{R}^d}p^{a,b}_0(t,x,y) f(y) dy - f(x)\right) g(x) dx = \int_{\mathbb{R}^d}\left(\Delta + a^\alpha \Delta^{\alpha/2}\right)f(x)g(x) dx.
  \end{equation*}
  For $t \in (0,t_*]$, let $I(t) = t^{-1}\int_{\mathbb{R}^d}\int_{\mathbb{R}^d} p^{a,b}_1(t,x,y) f(y) g(x) dx$.
  We claim that $I(t)$ converges to
  $\int_{\mathbb{R}^d} ( b(x)\cdot \nabla f(x)) g(x) dx$
  as $t \rightarrow 0$. By (\ref{equ_pndef}), Fubini's theorem and integration by parts,
  \begin{equation*}
    I(t) = \int_{\mathbb{R}^d}\int_{\mathbb{R}^d}\int_{\mathbb{R}^d}\int_0^t \frac{1}{t} p^a(t-s,x,z)p^a(s,z,y) ds ( b(z)\cdot \nabla f(y)) g(x) dz dy dx.
  \end{equation*}
 Since $ g(x) \nabla f(y)$ is uniformly continuous and bounded, for every $\varepsilon > 0$, there is $\delta > 0$ so that
   $|g(x) \nabla f(y) - g(z) \nabla f(w)| < \varepsilon$ for
   $|x-z|<\delta$ and $|y-w|<\delta$.
   Let   $M_0 = \sup_{x,y\in \mathbb{R}^d} | g(x) \nabla f(y) | $,
  and $K$ be the support of $\nabla f$.
    Recall that $K^1$ denotes the $1$-neighborhood of $K$. Clearly
  \begin{align*}
    &|I(t) - \int_{\mathbb{R}^d} ( b(z)\cdot \nabla f(z)) g(z) dz|\\
    \le& \int_{\mathbb{R}^d}\int_{\mathbb{R}^d}\int_{\mathbb{R}^d} \int_0^t \frac{1}{t} p^a(t-s,x,z)p^a(s,z,y) ds|b(z)|\,
    |g(x) \nabla f(y) - g(z) \nabla f(z)|dx dy dz \\
    =& \left(\int_{(K^1)^c}\int_{\mathbb{R}^d\times  \mathbb{R}^d}\int_0^t + \int_{K^1}\int_{(B(z,\delta)\times B(z,\delta))^c}\int_0^t + \int_{K^1}\int_{B(z,\delta)\times B(z,\delta)}\int_0^t \right) \cdots ds dx dy dz\\
      =&: J_1 + J_2 + J_3.
  \end{align*}
  We  estimate $J_1,J_2$ and $J_3$ separately. Note that if $x \in K$ and $z \in (K^1)^c$, then $|x - z| \ge 1$ and so by Theorem \ref{thm_pabound}, for $x,y\in \mathbb{R}^d$ and $0<s < t$,
  \begin{equation*}
    p^a(t-s,x,z) \le C_{\CTpaUc}q_{d,C_{\CTpaUexp}}^a(t-s,x,z) \le C_{\CTpaUc}c_1\frac{t-s}{|x-z|^{d+\alpha}}.
  \end{equation*}
  where $c_1$ is a positive constant depending only on $d,\alpha,M$. Thus,
  \begin{align*}
    J_1
 &= \int_{(K^1)^c }\int_{K\times \mathbb{R}^d}\int_0^t \frac{1}{t} p^a(t-s,x,z)p^a(s,z,y) ds|b(z)|\,  |g(x) \nabla f(y) |dx dy dz  \\
  &\leq
  M_0\int_{(K^1)^c}\int_{K}\int_0^t \left(\int_{\mathbb{R}^d} p^a(s,z,y) dy\right) \frac{1}{t} p^a(t-s,x,z) |b(z)|ds dx dz\\
    &\leq
   M_0C_{\CTpaUc}c_1\int_{(K^1)^c}\int_{K}\int_0^t \frac{1}{t} \frac{t-s}{|x - z|^{d+\alpha}} |b(z)|ds dx dz\\
    &\le tM_0C_{\CTpaUc}c_1  |K|\sup_{x \in \mathbb{R}^d} \int_{|x - z| \ge 1}\frac{|b(z)|}{|x - z|^{d+\alpha}} dz \\
    & \le tM_0C_{\CTpaUc}c_1 |K|\sup_{x \in \mathbb{R}^d} H_b^{(3-\alpha)/2}(1,x) ,
  \end{align*}
 which tends to zero as $t\to 0$.
 Similarly, if $(x,y) \in (B(z,\delta)\times B(z,\delta))^c$, then $|x - z| \ge \delta$ or $|y - z|\ge \delta$. Since $b$ is locally integrable,  we have
  \begin{equation*}
    J_2 \le 2tM_0C_{\CTpaUc}c_1 \int_{K^1}\int_{|x - z| \ge \delta} |b(z)|\frac{1}{|x - z|^{d+\alpha}}dx dz \le 2tM_0C_{\CTpaUc}c_1\delta^{-d-2}\int_{K^1} |b(z)|dz \rightarrow 0
  \end{equation*}
  as $t \rightarrow 0$.
  \begin{equation*}
    J_3 \le \varepsilon \int_{K^1}\int_{\mathbb{R}^d}\int_{\mathbb{R}^d} \int_0^t \frac{1}{t} p^a(t-s,x,z)p^a(s,z,y) ds|b(z)|dx dy dz \le \varepsilon \int_{K^1} |b(z)| dz.
  \end{equation*}
  Since $\varepsilon$ is arbitrary, we have $\lim_{t\rightarrow 0} I(t) = \int_{\mathbb{R}^d} ( b(z) \cdot \nabla f(z)) g(z) dz$.

  By (\ref{equ_pndef}), Lemma \ref{thm_pabbounds} and the dominated convergence theorem, we have
  \begin{equation*}
    \sum_{k = 2}^{\infty} p^{a,b}_k(t,x,y) = \int_{\mathbb{R}^d} \int_0^t \left(\sum_{k = 1}^{\infty} p^{a,b}_k(t-s,x,z)\right) b(z)\cdot \nabla_z p^a(s,z,y) ds dz.
  \end{equation*}
  Similar  to the estimate of $I(t)$, by Fubini's theorem, integration by parts and (\ref{equ_pk1up}), we have for all $t \in (0,t_*]$
  \begin{align*}
    &\frac{1}{t}\left|\int_{\mathbb{R}^d}\int_{\mathbb{R}^d} \left(\sum_{k = 2}^{\infty}p^{a,b}_k(t,x,y) \right) f(y) g(x) dy dx\right|\\
    =& \frac{1}{t}\left|\int_{\mathbb{R}^d}\int_{\mathbb{R}^d} \int_{\mathbb{R}^d} \int_0^t \left(\sum_{k = 1}^{\infty} p^{a,b}_k(t-s,x,z)\right) p^a(s,z,y) (b(z) \cdot \nabla f(y)) g(x) ds dz dy dx\right|\\
    \le&\frac{2C_{\CTpaUc}C_{\CTnablapaU}C_{\CTtherepII}M_b(\sqrt{t})}{t}\int_{\mathbb{R}^d}\int_{\mathbb{R}^d} \int_{\mathbb{R}^d} \int_0^t q_{d,C_{\CTpaUexp}}^a(t-s,x,z) p^a(s,z,y) |b(z)| |g(x)\nabla f(y)| ds dz dy dx\\
    \le&2C_{\CTpaUc}C_{\CTnablapaU}C_{\CTtherepII}M_b(\sqrt{t})\left(\frac{2C_{\CTpaLexp}}{C_{\CTpaUexp}}\right)^{d/2}C_{\CTpaLc}^{-1}\\ &\times\frac{1}{t}\int_{\mathbb{R}^d}\int_{\mathbb{R}^d} \int_{\mathbb{R}^d} \int_0^t p^a(\frac{2C_{\CTpaLexp}}{C_{\CTpaUexp}}(t-s),x,z) p^a(s,z,y) |b(z)| |g(x)\nabla f(y)| ds dz dy dx,
  \end{align*}
  which goes to zero as $t \rightarrow 0$. This completes the proof.
\end{proof}

\section{Uniqueness and Positivity}
\const{\CTpabAbsUcp}
\const{\CTpabAbsUexp}
\const{\CTpabAbsUcq}

\begin{thm}\label{thm_pababsup}
  Suppose $M > 0$. There are constants $C_{\CTpabAbsUcp} = C_{\CTpabAbsUcp}(d,\alpha,M)$, $C_{\CTpabAbsUexp} = C_{\CTpabAbsUexp}(d,\alpha,M,b)$ such that for all $a \in (0,M]$,
  \begin{equation}\label{equ_pabAbsUp}
    \left|p^{a,b}(t,x,y) \right| \le C_{\CTpabAbsUcp} e^{C_{\CTpabAbsUexp}t} p^a({2C_{\CTpaLexp}t}/{C_{\CTpaUexp}},x,y), \quad t>0 \text{ and } x,y \in \mathbb{R}^d.
  \end{equation}
  Consequently, for any $T>0$, there is a constant $C_{\CTpabAbsUcq}=C_{\CTpabAbsUcq}(d,\alpha,M,T)$ such that
  \begin{equation*}
    \left|p^{a,b}(t,x,y) \right| \le C_{\CTpabAbsUcq} e^{C_{\CTpabAbsUexp}t} q_{d,C_{\CTpaUexp}^2/(2C_{\CTpaLexp})}^a(t,x,y),\quad t\in(0,T] \text{ and } x,y \in \mathbb{R}^d.
  \end{equation*}
\end{thm}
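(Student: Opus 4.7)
The plan is to first convert the short-time bound from Lemma \ref{thm_pabbounds} into one of the form $|p^{a,b}(t,x,y)| \le K_1\, p^a(\lambda t, x, y)$ on $(0, t_*]$ for a suitable time rescaling $\lambda > 1$, and then iterate this bound using the Chapman-Kolmogorov equation \eqref{e:1.8} together with the semigroup property of $Z^a$. Set $\lambda := 2 C_{\CTpaLexp}/C_{\CTpaUexp}$, chosen so that $C_{\CTpaLexp}/\lambda = C_{\CTpaUexp}/2$; from \eqref{equ_paSharpBnds}, $\lambda > 1$ and $\lambda$ depends only on $d$ and $\alpha$. Inspecting the two pieces of \eqref{e:1.11} and using $\min(\lambda^{-d/2} A, \lambda B) \ge \lambda^{-d/2} \min(A, B)$ for the nonlocal part, one checks directly that
\begin{equation*}
  q^a_{d, C_{\CTpaLexp}}(\lambda t, z) \ge \lambda^{-d/2}\, q^a_{d, C_{\CTpaUexp}/2}(t, z), \quad t > 0,\ z \in \R^d.
\end{equation*}
Combined with the lower bound $p^a(\lambda t, z) \ge C_{\CTpaLc}\, q^a_{d, C_{\CTpaLexp}}(\lambda t, z)$ of Theorem \ref{thm_pabound} (applied with $T = \lambda$, so $C_{\CTpaLc}$ depends only on $d, \alpha, M$) and with Lemma \ref{thm_pabbounds}, this yields a constant $K_1 = K_1(d, \alpha, M) \ge 1$ such that
\begin{equation*}
  |p^{a,b}(t,x,y)| \le K_1\, p^a(\lambda t, x, y), \quad (t,x,y) \in (0, t_*] \times \R^d \times \R^d.
\end{equation*}

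Next, I would propagate this by induction on $n \ge 0$, showing $|p^{a,b}(t,x,y)| \le K_1^{n+1}\, p^a(\lambda t, x, y)$ for every $t \in (n t_*, (n+1) t_*]$. The case $n = 0$ is the preceding step. For the inductive step ($n \ge 1$), write $t = t_* + s$ with $s \in ((n-1) t_*, n t_*]$; then \eqref{e:1.8} and the inductive hypothesis give
\begin{equation*}
  |p^{a,b}(t, x, y)| \le \int_{\R^d} |p^{a,b}(t_*, x, z)|\, |p^{a,b}(s, z, y)|\, dz \le K_1^{n+1} \int_{\R^d} p^a(\lambda t_*, x, z)\, p^a(\lambda s, z, y)\, dz,
\end{equation*}
and the last integral equals $p^a(\lambda t, x, y)$ by the Chapman-Kolmogorov identity for the L\'evy process $Z^a$. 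Since $K_1^{n+1} \le K_1 \exp\bigl((\log K_1 / t_*)\, t\bigr)$ for every $t > n t_*$, this yields \eqref{equ_pabAbsUp} with $C_{\CTpabAbsUcp} = K_1$ and $C_{\CTpabAbsUexp} = (\log K_1)/t_*$; the dependence on $b$ enters only through $t_*$, i.e.\ through the rate at which $M_b(r)$ goes to zero, as required.

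For the consequence, apply the upper bound in Theorem \ref{thm_pabound} to $p^a(\lambda t, x, y)$ on the interval $(0, \lambda T]$ to get $p^a(\lambda t, x, y) \le C\, q^a_{d, C_{\CTpaUexp}}(\lambda t, x-y)$ for some $C = C(d, \alpha, M, T)$, and then a symmetric rescaling computation gives $q^a_{d, C_{\CTpaUexp}}(\lambda t, z) \le \max(\lambda^{-d/2}, \lambda)\, q^a_{d, C_{\CTpaUexp}/\lambda}(t, z)$. Since $C_{\CTpaUexp}/\lambda = C_{\CTpaUexp}^2/(2 C_{\CTpaLexp})$, this produces the stated constant $C_{\CTpabAbsUcq}$. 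The main issue here is purely the bookkeeping of constants, in particular ensuring $K_1 \ge 1$ so that the bound $K_1^{n+1} \le K_1 e^{((\log K_1)/t_*)\, t}$ is nontrivial; all the analytic input is already packaged into Lemma \ref{thm_pabbounds} and Theorem \ref{thm_pabound}, so the remainder is essentially a semigroup extension argument.
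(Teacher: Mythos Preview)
Your proposal is correct and follows essentially the same approach as the paper: convert the small-time bound $|p^{a,b}|\le C_{\CTpnSum}\,q^a_{d,C_{\CTpaUexp}/2}$ from Lemma \ref{thm_pabbounds} into $|p^{a,b}(t,x,y)|\le K_1\,p^a(\lambda t,x,y)$ on $(0,t_*]$ via the lower bound in Theorem \ref{thm_pabound}, and then extend to all $t>0$ by the Chapman--Kolmogorov equation for $p^{a,b}$ together with the semigroup property of $p^a$. The only cosmetic difference is that the paper splits $t$ into $k=\lfloor t/t_*\rfloor+1$ equal pieces at once, while you peel off one interval of length $t_*$ at a time by induction; both give the same exponential growth $K_1^{\,1+t/t_*}$ and the same constants.
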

\begin{proof}
  Note that by the expression of $q_{d,C_{\CTpaUexp}/2}^a(t,x,y)$ and the lower bound of $p^a(t,x,y)$ in Theorem \ref{thm_pabound} with $T=1$,
  \begin{equation}\label{equ_qpup}
    q_{d,C_{\CTpaUexp}/2}^a(t,x,y) \le \left(\frac{2C_{\CTpaLexp}}{C_{\CTpaUexp}}\right)^{d/2} q_{d,C_{\CTpaLexp}}^a({2C_{\CTpaLexp}t}/{C_{\CTpaUexp}},x,y) \le \left(\frac{2C_{\CTpaLexp}}{C_{\CTpaUexp}}\right)^{d/2} C_{\CTpaLc}^{-1}p^a({2C_{\CTpaLexp}t}/{C_{\CTpaUexp}},x,y).
  \end{equation}
  Recall that $t_*$ is the constant in Lemma \ref{thm_pabbounds}. If $t < t_*$, by (\ref{equ_pabupbd}) and Theorem \ref{thm_pabound},
  \begin{equation*}
    \left|p^{a,b}(t,x,y)\right| \le C_{\CTpnSum}q_{d,C_{\CTpaUexp}/2}^a(t,x,y) \le c_1p^a({2C_{\CTpaLexp}t}/{C_{\CTpaUexp}},x,y),
  \end{equation*}
  where $c_1 = \frac{C_{\CTpnSum}(2C_{\CTpaLexp})^{d/2}}{C_{\CTpaLc}C_{\CTpaUexp}^{d/2}}$ depends only on $d,\alpha,M$. It remains to consider the case $t > t_*$. Let $k=\lfloor t/t_*\rfloor+1$, then $t/k \in (0,t_*)$. Combining (\ref{equ_qpup}), (\ref{e:1.8}) and (\ref{equ_pabupbd}), we have
  \begin{align*}
    \left|p^{a,b}(t,x,y)\right| &\le \int_{\mathbb{R}^{d(k-1)}} c_1^k p^a(\frac{2C_{\CTpaLexp}}{C_{\CTpaUexp}}\frac t k,x,x_1) \cdots p^a(\frac{2C_{\CTpaLexp}}{C_{\CTpaUexp}}\frac t k,x_{k-1},y) dx_1\cdots dx_{k-1}\\
    &= c_1^k p^a({2C_{\CTpaLexp}t}/{C_{\CTpaUexp}},x,y)\\
    &\le c_1 c_1^{\frac{t}{t_*}}p^a({2C_{\CTpaLexp}t}/{C_{\CTpaUexp}},x,y),
  \end{align*}
  which gives the first conclusion with $C_{\CTpabAbsUcp}=c_1$ and $C_{\CTpabAbsUexp} = \frac{1}{t_*}\ln c_1$. Furthermore, by the upper bound of $p^a(t,x,y)$ in Theorem \ref{thm_pabound}, for $t \in (0,T]$ and $x,y\in \mathbb{R}^d$
  \begin{equation*}
    p^a({2C_{\CTpaLexp}t}/{C_{\CTpaUexp}},x,y) \le C_{\CTpaUc}q_{d,C_{\CTpaUexp}/2}^a({2C_{\CTpaLexp}t}/{C_{\CTpaUexp}},x,y) \le \frac{2C_{\CTpaLexp}C_{\CTpaUc}}{C_{\CTpaUexp}} q_{d,C_{\CTpaUexp}^2/(2C_{\CTpaLexp})}^a(t,x,y).
  \end{equation*}
  Combining the last two displays, we finish the proof by setting $C_{\CTpabAbsUcq}= c_1((2C_{\CTpaLexp}C_{\CTpaUc}/C_{\CTpaUexp})\vee 1)$.
\end{proof}

\begin{thm}
  Suppose that $M > 0$ and $b \in \mathbb{K}_{d,1}$. For every $a \in (0,M]$, $p^{a,b}(t,x,y)$ satisfies (\ref{equ_Duhamel}) for all $t > 0$ and $x,y\in\mathbb{R}^d$.
\end{thm}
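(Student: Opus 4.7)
The plan is to extend Duhamel's formula, which holds for $t \in (0, t_*]$ by construction, to all $t > 0$ via induction on the intervals $(k t_*, (k+1) t_*]$, using the Chapman-Kolmogorov identity \eqref{e:1.8} as the bridge between short-time and long-time behavior.

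\textbf{Base case.} For $t \in (0, t_*]$ the series representation \eqref{equ_pabsmalldef} together with the recursive definition \eqref{equ_pndef} gives the identity termwise: interchanging summation and integration (justified by the bound \eqref{equ_pkupbd} and the 3P estimate \eqref{equ_3p} of Lemma \ref{thm_ace} applied to bound $\int_0^t\!\int q_{d,\beta_1}^a(t-s,x,z)|b(z)|q_{d+1,\beta_2}^a(s,z,y)\,dz\,ds$), one gets
\[
\int_0^t\!\int_{\R^d} p^{a,b}(t-s,x,z)\,b(z)\cdot\nabla_z p^a(s,z,y)\,dz\,ds=\sum_{k=0}^{\infty}p_{k+1}^{a,b}(t,x,y)=p^{a,b}(t,x,y)-p^a(t,x,y).
\]

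\textbf{Inductive step.} Suppose \eqref{equ_Duhamel} holds for every $t\le kt_*$; I will show it on $(kt_*,(k+1)t_*]$. Write $t=kt_*+s$ with $s\in(0,t_*]$. By Chapman-Kolmogorov,
\[
p^{a,b}(t,x,y)=\int_{\R^d}p^{a,b}(s,x,z)\,p^{a,b}(kt_*,z,y)\,dz .
\]
Expand $p^{a,b}(kt_*,z,y)$ via the inductive hypothesis, apply Fubini (justified below), and reassemble a Chapman-Kolmogorov identity $\int p^{a,b}(s,x,z)p^{a,b}(kt_*-v,z,w)\,dz=p^{a,b}(t-v,x,w)$ to obtain
\[
p^{a,b}(t,x,y)=\int p^{a,b}(s,x,z)p^a(kt_*,z,y)dz+\int_0^{kt_*}\!\!\int p^{a,b}(t-v,x,w)b(w)\cdot\nabla_w p^a(v,w,y)\,dw\,dv.
\]
For the first term, apply Duhamel's formula (base case) to $p^{a,b}(s,x,z)$, and use
\[
\int_{\R^d}\nabla_w p^a(u,w,z)\,p^a(kt_*,z,y)\,dz=\nabla_w p^a(u+kt_*,w,y),
\]
which follows from Chapman-Kolmogorov for $p^a$ combined with dominated convergence, using the gradient estimate of Theorem \ref{thm_Gradpabound} to pull the derivative through the integral. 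After the substitution $v=u+kt_*$, this yields precisely $p^a(t,x,y)+\int_{kt_*}^{t}\!\int p^{a,b}(t-v,x,w)b(w)\cdot\nabla_w p^a(v,w,y)\,dw\,dv$, and adding the two pieces produces \eqref{equ_Duhamel} on the interval $(kt_*,(k+1)t_*]$.

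\textbf{Main obstacle.} The substantive work lies in justifying the various interchanges. The Fubini step in the expansion above requires absolute integrability of the integrand; this follows by combining Theorem \ref{thm_pababsup} (to bound $|p^{a,b}(t-v,x,w)|$ by a multiple of $p^a$), Theorem \ref{thm_Gradpabound} (to bound $|\nabla_w p^a(v,w,y)|$ by $C_{\CTnablapaU}q^a_{d+1,3C_{\CTpaUexp}/4}(v,w,y)$), and Lemma \ref{thm_ace} to integrate out the spatial and temporal variables against $|b(w)|$. The same ingredients, together with the local integrability of $b$ and the smoothness of $p^a$ away from the diagonal, justify passing $\nabla_w$ under the $z$-integral. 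No new estimate beyond those already proved is needed; the argument is bookkeeping built on top of Lemma \ref{thm_ace}, Theorem \ref{thm_Gradpabound}, Theorem \ref{thm_pababsup}, and the Chapman-Kolmogorov equation \eqref{e:1.8}.
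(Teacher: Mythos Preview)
Your proposal is correct and follows essentially the same strategy as the paper: establish Duhamel on $(0,t_*]$ from the series expansion, then bootstrap to all $t>0$ via Chapman--Kolmogorov, the inductive hypothesis, Fubini (justified by Theorem~\ref{thm_pababsup}, Theorem~\ref{thm_Gradpabound}, and Lemma~\ref{thm_ace}), and the identity $\int \nabla_w p^a(u,w,z)\,p^a(r,z,y)\,dz=\nabla_w p^a(u+r,w,y)$. The only cosmetic difference is the induction scheme: the paper doubles the time window (passing from $(0,2^k t_*]$ to $(0,2^{k+1}t_*]$ by splitting $t$ as $t/2+t/2$), whereas you add $t_*$ at each step (splitting $t$ as $s+kt_*$); both work and rely on the same estimates.
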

\begin{proof}
  Recall that $t^*$ is the constant in Lemma \ref{thm_pabbounds}. We first prove that $p^{a,b}(t,x,y)$ satisfies (\ref{equ_Duhamel}) for all $t \in (0,t^*]$ and $x,y\in\mathbb{R}^d$. Indeed, by (\ref{equ_pabsmalldef}), (\ref{equ_pk1up}), Theorem \ref{thm_Gradpabound}, (\ref{equ_3p}) and the dominated convergence theorem, we have for all $t \in (0,t^*]$,
  \begin{align*}
    p^{a,b}(t,x,y) &= \sum_{n=0}^\infty p^{a,b}_n(t,x,y)\\
    &=
     p^a (t, x, y)
     + \sum_{n=1}^\infty \int_0^t\int_{\mathbb{R}^d} p^{a,b}_{n-1}(t-s,x,z) b(z) \nabla_z p^a(s,z,y) dzds\\
    &=
     p^a (t, x, y)
     + \int_0^t\int_{\mathbb{R}^d} \sum_{n=1}^\infty p^{a,b}_{n-1}(t-s,x,z) b(z) \nabla_z p^a(s,z,y) dzds\\
    &=
     p^a (t, x, y)
     + \int_0^t\int_{\mathbb{R}^d} p^{a,b}(t-s,x,z) b(z) \nabla_z p^a(s,z,y) dzds.
  \end{align*}
  Now, we use induction in $k$ to prove (\ref{equ_Duhamel}) for all $t > 0$. Suppose that (\ref{equ_Duhamel}) is true for $t \in (0,2^kt^*](k\ge 0)$ and for all $x,y\in \mathbb{R}^d$. We will prove (\ref{equ_Duhamel}) is true for $t \in (2^kt^*,2^{k+1}t^*]$. Setting $s = t/2 \in (2^{k-1}t^*,2^kt^*]$, by (\ref{e:1.8}), Theorem \ref{thm_pababsup}, (\ref{equ_3p}) and Fubini's theorem, we have
  \begin{align*}
    p^{a,b}(t,x,y) &= \int_{\mathbb{R}^d} p^{a,b}(s,x,z) p^{a,b}(s,z,y) dz\\
    &= \int_{\mathbb{R}^d} p^{a,b}(s,x,z) \left(
    p^{a}(s,z,y)
    + \int_0^s\int_{\mathbb{R}^d} p^{a,b}(s-r,z,w)b(w)\nabla_w p^a(r,w,y) dwdr\right)dz\\
    &= \int_{\mathbb{R}^d}
    p^{a}(s,x,z) p^{a}(s,z,y) dz\\
    &\quad + \int_{\mathbb{R}^d}\left(\int_0^s\int_{\mathbb{R}^d} p^{a,b}(s-r,x,u)b(u)\nabla_up^a(r,u,z) dudr\right) p^a(s,z,y) dz\\
    &\quad + \int_{\mathbb{R}^d} p^{a,b}(s,x,z) \left(\int_0^s\int_{\mathbb{R}^d} p^{a,b}(s-r,z,w)b(w)\nabla_wp^a(r,w,y) dwdr\right)dz\\
    &=
     p^{a}(t,x,y)
     +\int_0^s\int_{\mathbb{R}^d} p^{a,b}(s-r,x,u)b(u)\left(\int_{\mathbb{R}^d}\nabla_u p^a(r,u,z) p^a(s,z,y) dz\right) dudr\\
    &\quad + \int_0^s\int_{\mathbb{R}^d} \left(\int_{\mathbb{R}^d} p^{a,b}(s,x,z) p^{a,b}(s-r,z,w)dz\right) b(w)\nabla_w p^a(r,w,y) dwdr\\
    &=
     p^{a}(t,x,y)
     +\int_0^s\int_{\mathbb{R}^d} p^{a,b}(s-r,x,u)b(u) \nabla_u p^a(r+s,u,y) dudr\\
    &\quad + \int_0^s\int_{\mathbb{R}^d} p^{a,b}(2s-r,x,w) b(w)\nabla_wp^a(r,w,y) dwdr\\
    &=
     p^{a}(t,x,y)
     +\int_s^{2s}\int_{\mathbb{R}^d} p^{a,b}(2s-r,x,u)b(u) \nabla_u p^a(r,u,y) dudr\\
    &\quad + \int_0^s\int_{\mathbb{R}^d} p^{a,b}(2s-r,x,w) b(w)\nabla_wp^a(r,w,y) dwdr\\
    &=
     p^{a}(t,x,y)
     +\int_0^t\int_{\mathbb{R}^d} p^{a,b}(t-r,x,z)b(z) \nabla_z p^a(r,z,y) dzdr,
  \end{align*}
  where in the forth equality, we can change the order of integral and $\nabla$, since for any $t_1,t_2\in(0,\infty)$ and $x,y\in \mathbb{R}^d$,
  \begin{equation*}
    \nabla_x p^a(t_1+t_2,x,y) = \int_{\mathbb{R}^d} \nabla_x p^a(t_1,x,z)p^a(t_2,z,y) dz,
  \end{equation*}
  which can be proved by Theorem \ref{thm_Gradpabound} and the dominated convergence theorem.
\end{proof}

\begin{thm}\label{thm_hkUnique}
  Suppose that $M > 0$ and $b \in \mathbb{K}_{d,1}$. For every $a \in (0,M]$, $p^{a,b}(t,x,y)$ is the unique continuous heat kernel that satisfies the Chapman-Kolmogorov equation \eqref{e:1.8} on $(0,\infty)\times\mathbb{R}^d\times\mathbb{R}^d$, Duhamel's formula \eqref{equ_Duhamel}
on $(0,t_0]\times\mathbb{R}^d\times\mathbb{R}^d$ for some constant $t_0>0$ and that for some $c_1>0$,
  \begin{equation}\label{equ_qbcUpSmallTime}
    \left|p^{a,b}(t,x,y)\right| \le c_1 p^a(t,x,y)　\quad \hbox{for } t\in (0,  t_0] \hbox{ and } x, y\in \R^d.
  \end{equation}
\end{thm}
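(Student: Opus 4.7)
The plan is to show that if $\tilde p^{a,b}$ is any kernel satisfying the three hypotheses, then $\tilde p^{a,b}\equiv p^{a,b}$. Set $u:=\tilde p^{a,b}-p^{a,b}$, and let $t_0>0$ be as in the hypothesis for $\tilde p^{a,b}$ (the constructed $p^{a,b}$ already satisfies Duhamel and the bound $|p^{a,b}|\le c\, p^a$ on this range by earlier results in the excerpt, shrinking the constant if needed). Subtracting the two Duhamel identities yields the homogeneous integral equation
\[
u(t,x,y)=\int_0^t\!\!\int_{\mathbb{R}^d} u(t-s,x,z)\, b(z)\cdot\nabla_z p^a(s,z,y)\,dz\,ds,\qquad t\in(0,t_0].
\]
Combining the assumed bound with Theorem \ref{thm_pabound} and the fact that $\beta\mapsto q^a_{d,\beta}$ is decreasing, there is a constant $c_2$ with $|u(t,x,y)|\le c_2\, q^a_{d,C_{\CTpaUexp}/2}(t,x,y)$ on $(0,t_0]\times\mathbb{R}^d\times\mathbb{R}^d$.

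The heart of the argument is a geometric iteration. Plug the current bound on $|u|$ into the integral identity, estimate $|\nabla_z p^a(s,z,y)|\le C_{\CTnablapaU}\, q^a_{d+1,3C_{\CTpaUexp}/4}(s,z,y)$ via Theorem \ref{thm_Gradpabound}, and apply the 3P-type inequality \eqref{equ_3p} of Lemma \ref{thm_ace} with $\beta_1=C_{\CTpaUexp}/2<3C_{\CTpaUexp}/4=\beta_2$. Using that $r\mapsto M_b(r)$ is nondecreasing to replace $M_b(\sqrt{t-s})$ by $M_b(\sqrt{t})$, one iteration returns a bound of the same form multiplied by $C_{\CTnablapaU}\,C_{\CTtherepII}\,M_b(\sqrt{t})$. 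After $n$ iterations,
\[
|u(t,x,y)|\le c_2\bigl(C_{\CTnablapaU}\,C_{\CTtherepII}\,M_b(\sqrt{t})\bigr)^n q^a_{d,C_{\CTpaUexp}/2}(t,x,y).
\]
Since $b\in\mathbb{K}_{d,1}$ implies $M_b(\sqrt{t})\to 0$ as $t\downarrow 0$, choose $t_1\in(0,t_0]$ with $C_{\CTnablapaU}\,C_{\CTtherepII}\,M_b(\sqrt{t_1})<1$; letting $n\to\infty$ forces $u\equiv 0$ on $(0,t_1]\times\mathbb{R}^d\times\mathbb{R}^d$.

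To extend uniqueness to all $t>0$, for any $t>t_1$ pick an integer $k\ge 1$ with $t/k\le t_1$ and iterate Chapman-Kolmogorov \eqref{e:1.8} to write both $\tilde p^{a,b}(t,\cdot,\cdot)$ and $p^{a,b}(t,\cdot,\cdot)$ as the same $(k-1)$-fold spatial convolution of their common time-$t/k$ marginals; they must therefore agree. The main (essentially the only) obstacle is arranging the first iteration correctly: the Gaussian/jump exponents must be chosen so that Lemma \ref{thm_ace} reproduces a bound in the \emph{same} $q^a_{d,C_{\CTpaUexp}/2}$ on the right-hand side, enabling iteration, and the $M_b(\sqrt{t-s})$ factor must be pulled out by monotonicity. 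Once this bookkeeping is handled, the geometric shrinkage from $M_b(\sqrt{t})\to 0$ closes the argument cleanly.
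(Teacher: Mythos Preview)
Your proof is correct and follows essentially the same route as the paper. The only cosmetic difference is that you work directly with the difference $u=\tilde p^{a,b}-p^{a,b}$ and iterate the homogeneous Duhamel equation, whereas the paper iterates Duhamel for the candidate $\overline p$ to write $\overline p=\sum_{j=0}^{n-1}p_j^{a,b}+R_n$ with $|R_n|\le c_1(C_{\CTnablapaU}C_{\CTtherepII}M_b(\sqrt t))^n q^a_{d,C_{\CTpaUexp}/2}$; in both cases the same gradient estimate, the same inequality \eqref{equ_3p}, and the same Chapman--Kolmogorov extension do all the work.
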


\begin{proof}
  Suppose that $\overline{p}(t,x,y)$ is any continuous heat kernel that satisfies Duhamel's formula (\ref{equ_Duhamel}) and (\ref{equ_qbcUpSmallTime}) for $(t,x,y) \in (0,t_0]\times\mathbb{R}^d\times\mathbb{R}^d$. Without loss of generality, we may and do assume that $t_0 \le t^*$. Firstly, let $R_1(t,x,y) = \int_0^t\int_{\mathbb{R}^d} \overline{p}(t-s,x,z)b(z)\nabla_z p^a(s,z,y) dz ds$ and
  \begin{equation*}
    R_n(t,x,y) = \int_0^t\int_{\mathbb{R}^d} R_{n-1}(t-s,x,z)b(z)\nabla_z p^a(s,z,y) dz ds,\quad n\ge 2.
  \end{equation*}
  Similar to the arguments that lead to (\ref{equ_pkupbd}), by (\ref{equ_qbcUpSmallTime}), we can recursively verify that $R_n(t,x,y)$ is well defined. Furthermore, we have the upper bound of $\left|R_n(t,x,y)\right|$:
  \begin{equation*}
    \left|R_n(t,x,y)\right| \le  c_1(C_{\CTnablapaU}C_{\CTtherepII}M_b(\sqrt{t}))^n q_{d,C_{\CTpaUexp}/2}^a(t,x,y).
  \end{equation*}
  On the other hand, using Duhamel's formula (\ref{equ_Duhamel}) inductively, we have for every $n \ge 1$,
  \begin{equation*}
    \overline{p}(t,x,y) = \sum_{j=0}^{n-1} p^{a,b}_j(t,x,y) + R_n(t,x,y),
  \end{equation*}
  where $p^{a,b}_j(t,x,y)$ is defined by (\ref{equ_pndef}). Note that for all $(t,x,y) \in (0,t_0]\times\mathbb{R}^d\times\mathbb{R}^d$, by the proof of Lemma \ref{thm_pabbounds}, $C_{\CTnablapaU}C_{\CTtherepII}M_b(\sqrt{t})\le 1/2$ and so
  \begin{equation*}
    \left|R_n(t,x,y)\right| \le c_1 2^{-n} q_{d,C_{\CTpaUexp}/2}^a(t,x,y) < \infty,
  \end{equation*}
  which goes to zero as $n \rightarrow \infty$. Thus, we have
  \begin{equation*}
    \overline{p}(t,x,y) = \sum_{k=0}^\infty p^{a,b}_k(t,x,y) = p^{a,b}(t,x,y),\quad \text{ for all }(t,x,y) \in (0,t_0]\times\mathbb{R}^d\times\mathbb{R}^d.
  \end{equation*}
  Since both $\overline{p}$ and $p^{a,b}$ satisfy the Chapman-Kolmogorov equation (\ref{e:1.8}) on $(0,\infty)\times\mathbb{R}^d\times\mathbb{R}^d$, we have $\overline{p} = p^{a,b}$ on $(0,\infty)\times\mathbb{R}^d\times\mathbb{R}^d$.
\end{proof}

Unlike that  in  \cite{BogdanJakubowski.2007},
 it is not easy to show the positivity of $p^{a, b}(t, x, y)$ directly from its construction.
We show $p^{a,b}(t, x, y)\geq 0$ by adopting the approach from  \cite{ChenWang.2012},
using Hille-Yosida-Ray theorem when $b$ is bounded and continuous and  then using approximation for general $b$.
\begin{lem}\label{thm_feller}
  Suppose $M > 0$. For every $a\in (0,M]$ and every $t > 0$, $P_t^{a,b}$ maps bounded functions to continuous functions. Furthermore, $\{P_t^{a,b},t\ge 0\}$ is a strongly continuous semigroup in $C_{\infty}(\mathbb{R}^d)$.
\end{lem}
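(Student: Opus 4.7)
My plan has two components to establish: the strong Feller property (that $P_t^{a,b}$ sends bounded measurable functions to continuous ones) and the Feller property with strong continuity on $C_\infty(\mathbb{R}^d)$. The semigroup identity itself is already in hand from Theorem~\ref{thm_psmallsmgrp} together with the extension formula \eqref{equ_pablargedef}, so only the analytic properties need work.

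For the strong Feller part I fix $t>0$ and a bounded measurable $f$, and consider $x_n\to x_0$. Joint continuity of $p^{a,b}(t,\cdot,\cdot)$ from Theorem~\ref{thm_pabctscsvtv} yields $p^{a,b}(t,x_n,y)\to p^{a,b}(t,x_0,y)$ pointwise, and Theorem~\ref{thm_pababsup} supplies the pointwise bound $|p^{a,b}(t,x,y)|\le C_{\CTpabAbsUcq} e^{C_{\CTpabAbsUexp}t}\,q_{d,\beta}^a(t,x-y)$. Choosing a closed ball $K$ of radius $r$ around $x_0$ containing all $x_n$, I dominate $\sup_{x\in K} q_{d,\beta}^a(t,x-y)$ by $t^{-d/2}\mathbf{1}_{|y-x_0|\le 2r} + c\,q_{d,\beta}^a(t,(y-x_0)/2)\mathbf{1}_{|y-x_0|>2r}$, using $|x-y|\ge |y-x_0|/2$ when $|y-x_0|>2r$ together with radial monotonicity of the Gaussian and polynomial factors. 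This dominator is $y$-integrable, so dominated convergence gives $P_t^{a,b}f(x_n)\to P_t^{a,b}f(x_0)$.

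For $f\in C_\infty(\mathbb{R}^d)$, continuity of $P_t^{a,b}f$ comes from the previous step. For vanishing at infinity, given $\varepsilon>0$ choose $R$ with $|f|\le\varepsilon$ on $\{|y|>R\}$ and split
\[
|P_t^{a,b}f(x)|\le \varepsilon \int_{\mathbb{R}^d} |p^{a,b}(t,x,y)|\,dy + \|f\|_\infty \int_{|y|\le R} |p^{a,b}(t,x,y)|\,dy.
\]
The first integral is bounded in $x$ since $\int q_{d,\beta}^a(t,z)\,dz<\infty$; the second vanishes as $|x|\to\infty$ because $q_{d,\beta}^a(t,x-y)$ decays uniformly on the bounded set $\{|y|\le R\}$. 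Letting $\varepsilon\downarrow 0$ yields $P_t^{a,b}f\in C_\infty(\mathbb{R}^d)$. For strong continuity at $0$, I use conservativeness (Theorem~\ref{thm_pabctscsvtv}) to write
\[
P_t^{a,b}f(x)-f(x) = \int_{\mathbb{R}^d} p^{a,b}(t,x,y)\bigl(f(y)-f(x)\bigr)\,dy.
\]
Since $f\in C_\infty$ is uniformly continuous, picking $\delta$ so that $|f(y)-f(x)|<\varepsilon$ when $|y-x|<\delta$ and splitting the above integral at $|y-x|=\delta$ bounds the right-hand side by $C\varepsilon + 2\|f\|_\infty \int_{|z|\ge\delta} q_{d,\beta}^a(t,z)\,dz$. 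Both the Gaussian tail and $\int_{|z|\ge\delta} a^\alpha t/|z|^{d+\alpha}\,dz = C a^\alpha t\,\delta^{-\alpha}$ vanish as $t\downarrow 0$, yielding $\|P_t^{a,b}f-f\|_\infty\to 0$.

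I anticipate the main technical care is needed in the strong Feller step, where the pointwise upper bound from Theorem~\ref{thm_pababsup} has to be turned into a single $y$-integrable dominator as $x$ ranges over a compact set; this requires exploiting the translation structure of $q_{d,\beta}^a$ by splitting $\mathbb{R}^d$ into regions near and far from $x_0$ and using the monotonicity of both building blocks of $q_{d,\beta}^a$. Once this dominator is secured and the $L^1_y$ norm of $|p^{a,b}(t,x,\cdot)|$ is known to be bounded uniformly in $x$, the remaining Feller and strong continuity claims follow from standard manipulations using the a priori bounds and conservativeness.
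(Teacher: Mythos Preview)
Your proposal is correct and follows essentially the same approach as the paper: both arguments hinge on the joint continuity of $p^{a,b}$ from Theorem~\ref{thm_pabctscsvtv}, the pointwise domination by $q_{d,\beta}^a$ (or $p^a$) from Theorem~\ref{thm_pababsup}, conservativeness, and uniform continuity of $f$ for the strong continuity at $t=0$. The only cosmetic difference is in the $C_\infty$-preservation step: the paper substitutes $y\mapsto x+y$ to get $|P_t^{a,b}f(x)|\le C\int q_{d,\beta}^a(t,0,y)|f(x+y)|\,dy$ and applies dominated convergence directly, whereas you split on $\{|y|\le R\}$ versus $\{|y|>R\}$; both work, the paper's shortcut is slightly cleaner.
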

\begin{proof}
  By Theorem \ref{thm_pababsup} and Theorem \ref{thm_pabctscsvtv}, one can easily verify that $P_t^{a,b}$ maps bounded functions to continuous functions for every $t > 0$. For every $f \in C_{\infty}(\mathbb{R}^d)$ and $t>0$, by Theorem \ref{thm_pababsup},
  \begin{align*}
    \lim_{|x|\rightarrow \infty}\left|P^{a,b}_tf(x)\right|\le& \lim_{|x|\rightarrow \infty} \int_{\mathbb{R}^d} C_{\CTpabAbsUcq}e^{C_{\CTpabAbsUexp}t}q_{d,C_{\CTpaUexp}^2/(2C_{\CTpaLexp})}^a(t,x,y) f(y) dy\\
    \le& \lim_{|x|\rightarrow \infty} \int_{\mathbb{R}^d} C_{\CTpabAbsUcq}e^{C_{\CTpabAbsUexp}t}q_{d,C_{\CTpaUexp}^2/(2C_{\CTpaLexp})}^a(t,0,y) f(x+y) dy = 0,
  \end{align*}
  which shows $P^{a,b}_t f \in C_\infty(\mathbb{R}^d)$. Moreover, since $f$ is uniformly continuous on $\mathbb{R}^d$, for every $\varepsilon > 0$, there is a constant $\delta > 0$ such that $|f(x) - f(y)| \le \varepsilon$ for all $x,y\in \mathbb{R}^d$ with $|x-y| \le \delta$. And so by (\ref{equ_qup}),
  \begin{align*}
    &\lim_{t\rightarrow 0} \sup_{s\le t} \sup_{x\in \mathbb{R}^d} \int_{|x-y|\ge \delta} |p^{a,b}(s,x,y)| dy\\
    \le& \lim_{t\rightarrow 0} \sup_{s\le t} \sup_{x\in \mathbb{R}^d} \int_{|x-y|\ge \delta} C_{\CTpabAbsUcq}e^{C_{\CTpabAbsUexp}s} q_{d,C_{\CTpaUexp}^2/(2C_{\CTpaLexp})}^a(s,x,y) dy\\
    \le& \lim_{t\rightarrow 0}\sup_{x\in \mathbb{R}^d} \int_{|x-y|\ge \delta} C_{\CTpabAbsUcq}e^{C_{\CTpabAbsUexp}t} c_1\left(\frac{t}{|x-y|^{d+2}} + \frac{t}{|x-y|^{d+\alpha}}\right) dy = 0,
  \end{align*}
  where $c_1$ is some positive constant depending only on $d,\alpha,M$. Thus, we have
  \begin{align*}
    &\lim_{t\rightarrow 0} \sup_{s\le t} \sup_{x\in \mathbb{R}^d} |P^{a,b}_sf(x) - f(x)| = \lim_{t\rightarrow 0} \sup_{s\le t} \sup_{x\in \mathbb{R}^d} \left|\int_{\mathbb{R}^d} p^{a,b}(s,x,y) f(y)dy - f(x)\right| \\
    \le& \lim_{t\rightarrow 0} \sup_{s\le t} \sup_{x\in \mathbb{R}^d} \int_{|x-y| < \delta} |p^{a,b}(s,x,y)||f(x)-f(y)|dy\\
    \le& \lim_{t\rightarrow 0} \sup_{s\le t} \sup_{x\in \mathbb{R}^d} \int_{|x-y| < \delta} C_{\CTpabAbsUcp}e^{C_{\CTpabAbsUexp}t} p^a( {2C_{\CTpaLexp}^2s}/{C_{\CTpaUexp}^2} ,x,y) |f(x)-f(y)| dy\\
    \le& \varepsilon C_{\CTpabAbsUcp},
  \end{align*}
  which shows that $\lim_{t\rightarrow 0} \|P^{a,b}_t f - f\|_{\infty} = 0$.
\end{proof}

\begin{lem}\label{thm_pab'pos}
  Suppose $M > 0$ and the function $b$ is bounded and continuous on $\mathbb{R}^d$. Then, for every $a \in (0,M]$,
  \begin{equation*}
    p^{a,b}(t,x,y) \ge 0,\qquad t > 0 \text{ and } x,y\in \mathbb{R}^d.
  \end{equation*}
\end{lem}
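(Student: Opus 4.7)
The plan, following the strategy of \cite{ChenWang.2012}, is to produce a positivity-preserving Feller semigroup on $C_\infty(\R^d)$ via the Hille--Yosida--Ray theorem applied to $(\mathcal{L}^{a,b},C_c^\infty(\R^d))$, and then identify it with $\{P_t^{a,b}\}$ via the uniqueness result Theorem \ref{thm_hkUnique}. Positivity of $p^{a,b}$ follows because the constructed semigroup is automatically positivity preserving.

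First I verify that $(\mathcal{L}^{a,b}, C_c^\infty(\R^d))$ satisfies the positive maximum principle on $C_\infty(\R^d)$. If $f\in C_c^\infty(\R^d)$ attains a nonnegative global maximum at $x_0$, then $\Delta f(x_0)\le 0$ by the second derivative test, $\nabla f(x_0)=0$ so that $b(x_0)\cdot\nabla f(x_0)=0$, and from \eqref{equ_alphaexp} the integrand $(f(y)-f(x_0))/|x_0-y|^{d+\alpha}$ is nonpositive, giving $\Delta^{\alpha/2}f(x_0)\le 0$. Adding these yields $\mathcal{L}^{a,b}f(x_0)\le 0$.

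Next I check the range condition. Let $R_\lambda^a f(x):=\int_0^\infty e^{-\lambda t}P_t^a f(x)\,dt$ be the Feller resolvent of $Z^a$. The gradient estimate of Theorem \ref{thm_Gradpabound} combined with a direct computation gives $\|\nabla R_\lambda^a f\|_\infty\le c\lambda^{-1/2}\|f\|_\infty$ for $\lambda>0$, so for $\lambda$ sufficiently large the operator $K_\lambda:=b\cdot\nabla R_\lambda^a$ has operator norm strictly less than $1$ on $C_\infty(\R^d)$ (using $\|b\|_\infty<\infty$). A Neumann series argument then shows $R_\lambda^{a,b}:=R_\lambda^a(I-K_\lambda)^{-1}$ is a bounded linear operator on $C_\infty(\R^d)$ solving $(\lambda-\mathcal{L}^{a,b})R_\lambda^{a,b}h=h$ for $h\in C_\infty(\R^d)$, at least when interpreted on the graph closure; a smoothing-and-cutoff approximation then yields that $(\lambda-\mathcal{L}^{a,b})(C_c^\infty(\R^d))$ is dense in $C_\infty(\R^d)$. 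Hille--Yosida--Ray now furnishes a strongly continuous positivity-preserving contraction semigroup $\{T_t\}$ on $C_\infty(\R^d)$ whose generator extends the closure of $(\mathcal{L}^{a,b},C_c^\infty(\R^d))$.

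Finally I identify $T_t$ with $P_t^{a,b}$. Since the generators of $\{T_t\}$ and $\{P_t^a\}$ differ by $b\cdot\nabla$ on $C_c^\infty(\R^d)$, the semigroup Duhamel identity
\[
T_t f = P_t^a f + \int_0^t T_{t-s}(b\cdot\nabla)P_s^a f\, ds
\]
holds on a dense subset. Iterating and using the smooth density $p^a(t,x,y)$ and its gradient (Theorems \ref{thm_pabound}, \ref{thm_Gradpabound}), I obtain a kernel representation $T_t f(x)=\int \tilde p(t,x,y)f(y)\,dy$ with $\tilde p\ge 0$, which satisfies Duhamel's formula \eqref{equ_Duhamel} and, by the very estimates used in Lemma \ref{thm_pabbounds}, the small-time bound $\tilde p(t,x,y)\le c_1 p^a(t,x,y)$ on $(0,t_*]\times\R^d\times\R^d$. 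Theorem \ref{thm_hkUnique} now forces $\tilde p=p^{a,b}$, which yields $p^{a,b}\ge 0$. The main obstacles are the range-density step and the density extraction in the final step: for the former one must exploit the sharp gradient control of Theorem \ref{thm_Gradpabound} to make $\|K_\lambda\|<1$ rigorous on $C_\infty(\R^d)$; for the latter one must verify that the abstract semigroup $\{T_t\}$ admits a transition density satisfying exactly the hypotheses of Theorem \ref{thm_hkUnique}, which requires an iteration in parallel with the construction of $p^{a,b}$ itself.
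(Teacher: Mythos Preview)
Your approach differs from the paper's in a key structural way: you construct a \emph{new} semigroup $\{T_t\}$ from the operator $(\mathcal{L}^{a,b}, C_c^\infty(\R^d))$ via Hille--Yosida--Ray and then attempt to identify it with $\{P_t^{a,b}\}$ through Theorem~\ref{thm_hkUnique}. The paper instead applies Hille--Yosida--Ray \emph{directly to the already-constructed semigroup} $\{P_t^{a,b}\}$. Since Lemma~\ref{thm_feller} has already shown $\{P_t^{a,b}\}$ is strongly continuous on $C_\infty(\R^d)$, the paper proves (using boundedness and continuity of $b$) that $(P_t^{a,b}f - f)/t \to \mathcal{L}^{a,b}f$ \emph{uniformly} for $f\in C_c^2(\R^d)$, a strengthening of the weak convergence in Theorem~\ref{thm_generator}. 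This places $C_c^2(\R^d)$ inside the domain of the Feller generator $\widehat{\mathcal{L}}^{a,b}$ with $\widehat{\mathcal{L}}^{a,b}f=\mathcal{L}^{a,b}f$ there. The bound \eqref{equ_pabAbsUp} then shows the shifted generator has $(0,\infty)$ in its resolvent set, and the positive maximum principle for $\widehat{\mathcal{L}}^{a,b}$ is inherited from $C_c^2$ via \cite[Theorem~3.5.3]{Applebaum.2004}. Hille--Yosida--Ray now gives positivity of the shifted semigroup immediately, with no identification step needed.

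The paper's route sidesteps both obstacles you flag. Your range-density argument is genuinely incomplete: the Neumann series produces a bounded $R_\lambda^{a,b}$ on $C_\infty(\R^d)$, but its range need not lie in $C_c^\infty(\R^d)$ or even in the graph-closure of $(\mathcal{L}^{a,b},C_c^\infty(\R^d))$; establishing that $C_c^\infty$ is a core typically requires precisely the kind of uniform-convergence computation the paper performs. Your identification step is circular in flavor: extracting from the abstract $\{T_t\}$ a continuous density satisfying the pointwise Duhamel formula \eqref{equ_Duhamel} together with the a priori bound $\tilde p\le c_1 p^a$ amounts to reconstructing $p^{a,b}$ from scratch inside $T_t$. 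By working with $\{P_t^{a,b}\}$ from the outset, the paper never has to leave the concrete kernel and never needs Theorem~\ref{thm_hkUnique} for this lemma.
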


\begin{proof}
  Denote the Feller generator of $\{P^{a,b}_t, t\ge 0\}$ in $C_\infty(\mathbb{R}^d)$ by $\widehat{\mathcal{L}}^{a,b}$, which is a closed operator. For every $f \in C_c^2(\mathbb{R}^d)$, since $b$ is continuous, it is easy to see that
  $ {\mathcal{L}}^{a,b}f \in C_\infty(\mathbb{R}^d)$. Similar to Theorem \ref{thm_generator}, we claim that $(P^{a,b}_t f - f)/t$ uniformly converges to ${\mathcal{L}}^{a,b}f$ as $t \rightarrow 0$. Indeed, for any $t \in (0,t_*]$,
  \begin{align*}
    &\|(P^{a,b}_tf -f)/t - \mathcal{L}^{a,b} f\|_\infty \\
    =& \sup_{x\in\mathbb{R}^d} \left|\frac 1 t \left(\int_{\mathbb{R}^d} \sum_{k=0}^{\infty} p^{a,b}_k(t,x,y) f(y)dy - f(x)\right) - \left(\Delta + a^\alpha \Delta^{\alpha/2} + b\cdot\nabla\right)f(x) \right|\\
    \le& \sup_{x\in\mathbb{R}^d} \left|\frac{1}{t} \left(\int_{\mathbb{R}^d} p^{a,b}_0(t,x,y) f(y)dy - f(x)\right) - \left(\Delta + a^\alpha \Delta^{\alpha/2}\right)f(x) \right|\\
    &+ \sup_{x\in\mathbb{R}^d} \left|\frac{1}{t} \int_{\mathbb{R}^d} p^{a,b}_1(t,x,y) f(y)dy - b(x)\nabla f(x)\right|\\
    &+ \sup_{x\in\mathbb{R}^d} \bigg|\frac{1}{t} \int_{\mathbb{R}^d} \sum_{k=2}^{\infty} p^{a,b}_k(t,x,y) f(y)dy\bigg|\\
    =& I_1+I_2+I_3.
  \end{align*}
  It follows that $I_1$ goes to zero as $t \rightarrow 0$ since $\Delta + a^\alpha \Delta^{\alpha/2}$ is the generator of $Z^a$. We next treat $I_2$ as we did with $I$ in the proof of Theorem \ref{thm_generator}. Let $M_0 = \sup_{x\in\mathbb{R}^d}|b(x) \nabla f(x)|$. Since $f \in C_c^2(\mathbb{R}^d)$, for any $\varepsilon > 0$, there is constant $\delta > 0$ such that $|b(z)\nabla f(y) - b(x)\nabla f(x)| < \varepsilon$ for all $x\in \mathbb{R}^d$ and $(z,y) \in B(x,\delta) \times B(x,\delta)$.
 On the other hand, for  $(z,y) \in (B(x,\delta) \times B(x,\delta))^c$, either  $|z-x| \ge \delta$ or $|z-y| \ge \delta$.
 Hence by (\ref{equ_qup}),
   \begin{align*}
    I_2 \le& \sup_{x\in \mathbb{R}^d} \int_0^t \int_{\mathbb{R}^d}\int_{\mathbb{R}^d} \frac 1 t p^a(t-s,x,z) p^a(s,z,y) \left|b(z)\nabla_yf(y) - b(x)\nabla_xf(x)\right| dz dy ds\\
    \le& \sup_{x\in \mathbb{R}^d} \int_{B(x,\delta)\times B(x,\delta)} \int_0^t \frac 1 t p^a(t-s,x,z) p^a(s,z,y) |b(z)\nabla_yf(y) - b(x)\nabla_xf(x)| ds dz dy \\
    &+ \sup_{x\in \mathbb{R}^d}\int_{(B(x,\delta)\times B(x,\delta))^c} \int_0^t \cdots dsdzdy\\
    \le& \varepsilon + 2M_0\sup_{x\in \mathbb{R}^d}\int_{|x-z|\ge \delta}\int_0^t c_1\left(\frac{t-s}{|x-z|^{d+2}} + \frac{t-s}{|x-z|^{d+\alpha}}\right) ds dz\\
    \le& \varepsilon + M_0c_1\omega_d(\delta^{-2}/2+\delta^{\alpha}/\alpha) t.
  \end{align*}
  where $c_1$ is some positive constant depending only on $d,\alpha,M$. Since $\varepsilon$ is arbitrary, $I_2$ goes to zero as $t \rightarrow 0$. Similar to $I_2$, we can prove that $I_3$ goes to zero as $t \rightarrow 0$. Thus we have
  \begin{equation}\label{equ_firstcon}
    C_c^2(\mathbb{R}^d) \subset D(\widehat{\mathcal{L}}^{a,b}) \text{ and } \widehat{\mathcal{L}}^{a,b}f = \mathcal{L}^{a,b}f \quad \text{for all } f\in C_c^2(\mathbb{R}^d).
  \end{equation}
  On the other hand, for $\lambda > C_{\CTpabAbsUexp}$, by Theorem \ref{thm_pababsup},
  \begin{equation}
    \begin{split}
      \label{equ_rsvnt} \sup_{x\in \mathbb{R}^d} \int_0^{\infty} e^{-\lambda t} |P^{a,b}_tf(x)| dt \le& \sup_{x\in \mathbb{R}^d} \int_0^{\infty} e^{-\lambda t} \int_{\mathbb{R}^d} |p^{a,b}(t,x,y)| |f(y)| dy dt\\
      \le& \|f\|_\infty \int_0^\infty C_{\CTpabAbsUcp}e^{-(\lambda - C_{\CTpabAbsUexp}) t} dt = c_\lambda \|f\|_\infty,
    \end{split}
  \end{equation}
  where $c_\lambda = C_{\CTpabAbsUcp}/(\lambda - C_{\CTpabAbsUexp})$.  Consider the strongly continuous semigroup $\{e^{-C_{\CTpabAbsUexp}t}P^{a,b}_t,t \ge 0\}$ with its generator $\widehat{\mathcal{L}}^{a,b}- C_{\CTpabAbsUexp}$. By (\ref{equ_rsvnt}), the resolvent set $\rho(\widehat{\mathcal{L}}^{a,b} - C_{\CTpabAbsUexp})$ of $\widehat{\mathcal{L}}^{a,b} - C_{\CTpabAbsUexp}$ contains $(0,\infty)$. Moreover, $\widehat{\mathcal{L}}^{a,b} - C_{\CTpabAbsUexp}$ satisfies the positive
  maximum principle in view of (\ref{equ_firstcon}) and \cite[Theorem 3.5.3]{Applebaum.2004}. Therefore, $\{e^{-C_{\CTpabAbsUexp}t}P^{a,b}_t,t \ge 0\}$ is a positivity preserving semigroup on $C_\infty(\mathbb{R}^d)$ by
  Hille-Yosida-Ray theorem (see \cite[Theorem 3.5.1]{Applebaum.2004}).
  Since $\{e^{-C_{\CTpabAbsUexp}t}P^{a,b}_t,t \ge 0\}$ has a continuous kernel $e^{-C_{\CTpabAbsUexp}t}p^{a,b}(t,x,y)$, we have $p^{a,b}(t,x,y) \ge 0$ for all $(t,x,y) \in (0,\infty)\times \mathbb{R}^d\times \mathbb{R}^d$.
\end{proof}

In the rest of this section, we show by an approximation argument that Lemma \ref{thm_pab'pos} continues to hold for $b\in \mathbb{K}_{d,1}$. Let $\varphi$ be a non-negative function in $C_c^{\infty}(\mathbb{R}^d)$ with $supp(\varphi) \subset B(0,1)$ and $\int_{\mathbb{R}^d} \varphi (x) dx = 1$. For $n \ge 1$, define $\varphi_n(x) := n^d\varphi(nx)$ and
\begin{equation*}
  b_n(x) = \int_{\mathbb{R}^d} \varphi_n(x-y) b(y) dy, \quad x\in \mathbb{R}^d.
\end{equation*}
For any compact set $K \subset \mathbb{R}^d$ and $r > 0$, recall that $K^r$ is the $r$-neighborhood of $K$. For any $0 \le r_1 \le r_2 \le +\infty$ and $\beta \ge 0$, we have
\begin{equation}\label{equ_bnb}
  \begin{split}
    \sup_{x\in K} \int_{|x-y|\in [r_1,r_2)} \frac{|b_n(y)|}{|x-y|^{d-1+2\beta}} dy \le& \sup_{x\in K} \int_{|x-y|\in [r_1,r_2)} \int_{\mathbb{R}^d} \frac{\varphi_n(y-z)|b(z)|}{|x-y|^{d-1+2\beta}} dz dy\\
    =& \sup_{x\in K} \int_{|x-y|\in [r_1,r_2)} \int_{|z| < 1/n} \frac{\varphi_n(z)|b(y-z)|}{|x-y|^{d-1+2\beta}} dz dy\\ =& \sup_{x\in K} \int_{|z| < 1/n} \varphi_n(z)\int_{|x-z-y|\in [r_1,r_2)}\frac{|b(y)|}{|x-z-y|^{d-1+2\beta}} dydz\\
    \le&\int_{|z| < 1/n} \varphi_n(z) \sup_{x\in K^1} \int_{|x-y|\in [r_1,r_2)}\frac{|b(y)|}{|x-y|^{d-1+2\beta}} dydz\\ =&\sup_{x\in K^1} \int_{|x-y|\in [r_1,r_2)} \frac{|b(y)|}{|x-y|^{d-1+2\beta}} dy.
  \end{split}
\end{equation}
In particular, for every $r > 0$ and $n \ge 1$, by setting $r_1 = 0, r_2 = r$ and $\beta = 0$, we have
\begin{equation}\label{equ_MbnMb}
  M_{b_n}(r) \le M_b(r).
\end{equation}

Recall that $\gamma = (1+\alpha\wedge 1)/2$.
\begin{lem}\label{thm_Hbbn}
  $H_{b-b_n}^{\gamma}(t,x)$ converges to 0 uniformly on compact subsets of $(0,+\infty)\times \mathbb{R}^d$ as $n \rightarrow \infty$.
\end{lem}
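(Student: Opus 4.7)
The plan is to split the integral defining $H^{\gamma}_{b-b_n}(t,x) = \int_{\mathbb{R}^d}|b(y)-b_n(y)|H^{\gamma}(t,x-y)\,dy$ into three regions according to the size of $|x-y|$, make each piece small by choosing the cutoff parameters appropriately, and exploit the uniform bound \eqref{equ_MbnMb} on the Kato norms of $b_n$. Fix a compact $K\subset(0,\infty)\times\mathbb{R}^d$ contained in $[t_0,T]\times B(0,R)$, and let $\varepsilon>0$. I introduce parameters $\delta\in(0,\sqrt{t_0})$ (small) and $R_1\ge\sqrt{T}$ (large) to be chosen, and decompose over $A_1=\{|x-y|<\delta\}$, $A_2=\{\delta\le|x-y|<R_1\}$, and $A_3=\{|x-y|\ge R_1\}$.

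On $A_1$, the bound $H^{\gamma}(t,z)\le|z|^{-(d-1)}$ gives a contribution at most $M_{b-b_n}(\delta)\le M_b(\delta)+M_{b_n}(\delta)\le 2M_b(\delta)$ via \eqref{equ_MbnMb}, which is $<\varepsilon/3$ for small $\delta$ by the Kato condition, uniformly in $t,x,n$. On $A_3$, use $H^{\gamma}(t,z)\le T^{\gamma}/|z|^{d-1+2\gamma}$ (valid since $|z|\ge R_1\ge\sqrt{T}\ge\sqrt{t}$) and decompose into unit-width shells $A_k=\{R_1+k\le|x-y|<R_1+k+1\}$. Each $A_k$ can be covered by $\lesssim(R_1+k)^{d-1}$ balls of radius $1$; since $|z-y|^{d-1}\le 1$ on $B(z,1)$, the trivial estimate $\int_{B(z,1)}|b|\,dy\le M_b(1)$ together with \eqref{equ_MbnMb} yields $\int_{A_k}|b-b_n|\,dy\lesssim(R_1+k)^{d-1}M_b(1)$ uniformly in $x,n$. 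Summing gives the far-region bound
\[
\int_{A_3}|b-b_n|(y)H^{\gamma}(t,x-y)\,dy\lesssim T^{\gamma}M_b(1)\sum_{k\ge 0}(R_1+k)^{-2\gamma}\lesssim T^{\gamma}M_b(1)\,R_1^{1-2\gamma},
\]
which tends to $0$ as $R_1\to\infty$ because $\gamma=(1+\alpha\wedge 1)/2>1/2$, so $2\gamma>1$. Choose $R_1$ so this is $<\varepsilon/3$, uniformly in $t,x,n$.

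With $\delta$ and $R_1$ now fixed, on $A_2$ I would use $H^{\gamma}(t,z)\le\delta^{-(d-1)}$, dominating the middle contribution by $\delta^{-(d-1)}\int_{B(x,R_1)}|b-b_n|(y)\,dy\le\delta^{-(d-1)}\int_{B(0,R+R_1)}|b-b_n|(y)\,dy$. Setting $\tilde b:=b\,\mathbf{1}_{B(0,R+R_1+1)}\in L^1(\mathbb{R}^d)$ (possible since $b\in L^1_{\mathrm{loc}}$) and noting that $\operatorname{supp}\varphi_n\subset B(0,1/n)\subset B(0,1)$ for $n\ge 1$, one checks that $b_n=\varphi_n*\tilde b$ on $B(0,R+R_1)$, so the quantity equals $\delta^{-(d-1)}\|\tilde b-\varphi_n*\tilde b\|_{L^1(B(0,R+R_1))}$, which goes to $0$ as $n\to\infty$ by the classical $L^1$ convergence of mollifiers. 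Choose $n$ large enough that this is $<\varepsilon/3$, finishing the proof.

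The main obstacle is obtaining uniformity in $x\in\mathbb{R}^d$ for the tail estimate on $A_3$: one cannot simply invoke dominated convergence because $b$ need not decay at infinity. This is why I use the annular covering argument, which converts the local Kato estimate $M_b(1)<\infty$ into the uniform local-$L^1$ bound $\sup_z\int_{B(z,1)}|b|\,dy\le M_b(1)$; together with $2\gamma>1$ (from $\alpha>0$), this makes the sum over shells summable with a prefactor vanishing as $R_1\to\infty$.
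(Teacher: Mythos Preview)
Your proof is correct and uses the same three-region decomposition (near, intermediate, far in $|x-y|$) as the paper, with the near and intermediate regions handled identically. The only genuine difference is in the far region $A_3$: the paper exploits the spatial compactness of the set $K$ to translate everything to a single fixed center $x_0\in K^1$ and then simply uses that the tail $\int_{|x_0-y|\ge R'} |b(y)|\,|x_0-y|^{-(d-1+2\gamma)}\,dy$ of a finite integral (finite by Lemma~\ref{thm_HMb}) vanishes as $R'\to\infty$. Your annular-covering argument is more hands-on: it converts the Kato bound $M_b(1)<\infty$ into the uniform local $L^1$ control $\sup_z\int_{B(z,1)}|b|\le M_b(1)$ and then sums a convergent series using $2\gamma>1$. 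Your route yields a bound uniform over all $x\in\mathbb{R}^d$, which is stronger than needed (the lemma only asks for compact sets), at the cost of a slightly longer argument; the paper's route is shorter but uses the compactness of $K$ in an essential way.
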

\begin{proof}
  Let $[t_0,T_0]\times K \subset (0,+\infty)\times \mathbb{R}^d$ be an arbitrary compact set. Then, we have
  \begin{align*}
    \sup_{(t,x)\in [t_0,T_0]\times K} H_{b-b_n}^{\gamma}(t,x) \le& \sup_{x\in K} \int_{\mathbb{R}^d} \left(\frac{1}{|x-y|^{d-1}}\wedge \frac{T_0^{\gamma}}{|x-y|^{d-1+2\gamma}}\right)|b(y)-b_n(y)| dy\\
 \le& \sup_{x\in K} \left(\int_{|x-y| < r} +\int_{r\le |x-y|< R} +\int_{|x-y| \ge R} \right) \cdots dy\\
    =&: I_1+I_2+I_3,
  \end{align*}
where $0 < r < R< \infty$.
Taking $r_1 = 0, r_2 = r$ and $\beta = 0$ in (\ref{equ_bnb}), we have
  \begin{equation*}
    I_1 \le 2 \sup_{x\in K^1}
 \int_{|x-y|  < r} \frac{|b(y)|}{|x-y|^{d-1}} dy.
 \end{equation*}
  Since $b \in \mathbb{K}_{d,1}$, for any $\varepsilon > 0$, we can choose $r$ small enough such that
 $    I_1 \le 2\cdot \frac{\varepsilon}{4} = \frac{\varepsilon}{2}$.
   Taking $r_1 = R$, $ r_2 = \infty$
and $\beta = \gamma$ in (\ref{equ_bnb}), we have
  \begin{equation*}
  I_3 \le 2T_0^{\gamma} \sup_{x\in K^1} \int_{|x-y|\ge R}
 \frac{|b(y)|}{|x-y|^{d-1+2\gamma}} dy.
  \end{equation*}
 Fix a point $x_0 \in K^1$ and take $R > {\rm diam} (K^1)$. Note that for $x\in K^1$ with $|x-y|  \ge R$, we have
  \begin{align*}
    &|x_0 - y| \ge |x -y| - |x-x_0| \ge R - {\rm diam} (K^1),\\
    &\frac{|x_0-y|}{|x-y|} \le \frac{|x_0-x|+|x-y|}{|x-y|} \le \frac{{\rm diam} (K^1)}{R}+1 \le 2,
  \end{align*}
  and so
  \begin{equation*}
    \begin{split}
      I_3 \le 2^{d+2}T_0^{\gamma} \int_{|x_0-y|  \ge R-{\rm diam} (K^1)} \frac{|b(y)|}{|x_0-y|^{d-1+2\gamma}} dy.\\
    \end{split}
  \end{equation*}

  By Lemma \ref{thm_HMb} and the dominated convergence theorem, we can choose $R$ large enough such that
 $ I_3 < \frac{\varepsilon}{2}$.
  Now, we fix the above $r,R$.
Let $R_1>0$ so that $K^1 \subset B(0, R_1)$. Since $b\in L^1_{loc}(\R^d)$,
 \begin{equation*}
     \varlimsup_{n \rightarrow \infty} I_2 \le \varlimsup_{n \rightarrow \infty}
 r^{-(d-1)/2} \int_{|y|  < R_1+R} |b(y) - b_n(y)| dy = 0.
 \end{equation*}
  Then, we have
  \begin{equation*}
    \varlimsup_{n \rightarrow \infty} \sup_{(t,x)\in [t_0,T_0]\times K} H_{b-b_n}^{\gamma}(t,x) \le \frac{\varepsilon}{2} + \frac{\varepsilon}{2} + 0.
  \end{equation*}
  This proves the lemma since $\varepsilon$ is arbitrary.
\end{proof}

\const{\CTbnUc}
\const{\CTbnUexp}
\begin{lem}\label{thm_pkbn_b_upbd}
  Suppose $M > 0$ and $T > 0$. There exist positive constants $C_{\CTbnUc}=C_{\CTbnUc}(d,\alpha,M,T)$ and $C_{\CTbnUexp}=C_{\CTbnUexp}(d,\alpha,M,T)$ so that for every $n \ge 1$, $j \ge 1$ and all $a \in (0,M]$, $(t,x,y)\in (0,T]\times \mathbb{R}^d\times \mathbb{R}^d$,
  \begin{equation}\label{equ_pkbnb}
    |p_j^{a,b_n}(t,x,y)-p_j^{a,b}(t,x,y)| \le C_{\CTbnUc}\left(C_{\CTbnUexp} M_b(\sqrt{t})\right)^{j-1}\left( (H_{b-b_n}^{\gamma}(t,x) + H_{b-b_n}^{\gamma}(t,y)\right) q_{d,C_{\CTpaUexp}/2}^a(t,x,y).
  \end{equation}
\end{lem}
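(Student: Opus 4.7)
I prove \eqref{equ_pkbnb} by induction on $j$. For the base case $j=1$, I write $p_1^{a,b_n}-p_1^{a,b}=\int_0^t\!\int p^a(t-s,x,z)[b_n(z)-b(z)]\nabla_zp^a(s,z,y)\,dzds$, dominate $p^a$ and $|\nabla p^a|$ via Theorems \ref{thm_pabound} and \ref{thm_Gradpabound}, and apply \eqref{equ_3q} to the $s$-integral with the density $|b-b_n|$. Since $q_{d,C_{\CTpaUexp}}^a\le q_{d,C_{\CTpaUexp}/2}^a$, this yields the bound with constant $C_{\CTpaUc}C_{\CTnablapaU}C_{\CTtherepI}$ (the prefactor $(C_{\CTbnUexp}M_b(\sqrt t))^{j-1}=1$).

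For the inductive step from $j$ to $j+1$, decompose $p_{j+1}^{a,b_n}-p_{j+1}^{a,b}=A+B$, where $A:=\int_0^t\!\int[p_j^{a,b_n}-p_j^{a,b}](t-s,x,z)\,b(z)\,\nabla_zp^a(s,z,y)\,dzds$ and $B:=\int_0^t\!\int p_j^{a,b_n}(t-s,x,z)\,[b_n-b](z)\,\nabla_zp^a(s,z,y)\,dzds$. For $B$, the uniform bound \eqref{equ_pkupbd} on $p_j^{a,b_n}$ (valid because $M_{b_n}\le M_b$ by \eqref{equ_MbnMb}) followed by \eqref{equ_3q} with density $|b_n-b|$ gives a contribution of order $C_{\CTpaUc}C_{\CTnablapaU}C_{\CTtherepI}(C_{\CTnablapaU}C_{\CTtherepII}M_b(\sqrt t))^j[H_{b-b_n}^\gamma(t,x)+H_{b-b_n}^\gamma(t,y)]q_{d,C_{\CTpaUexp}/2}^a(t,x,y)$. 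For $A$, apply the inductive hypothesis and use monotonicity in the radial variable of both $M_b(\sqrt\cdot)$ and $H_f^\gamma(\cdot,\cdot)$ to replace $t-s$ by $t$ uniformly. This decomposes $A$ into an $H_{b-b_n}^\gamma(t,x)$-piece, which pulls out of the integral and is controlled by \eqref{equ_3p} applied with density $|b|$, and an $H_{b-b_n}^\gamma(t,z)$-piece.

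The $H_{b-b_n}^\gamma(t,z)$-piece is the main obstacle. Applying \eqref{equ_3q} with ``density'' $|b(z)|H_{b-b_n}^\gamma(t,z)$ and unfolding $H_{b-b_n}^\gamma(t,z)=\int|b-b_n|(w)H^\gamma(t,z-w)\,dw$, Fubini reduces the problem to bounding $\int|b(z)|H^\gamma(t,z-w)H^\gamma(t,x-z)\,dz$ (and its $y$-analog) for each $w$. Here I use the pointwise product estimate
\[
H^\gamma(t,z-w)\,H^\gamma(t,x-z)\le 2^{d-1+2\gamma}\,H^\gamma(t,x-w)\bigl[H^\gamma(t,z-w)+H^\gamma(t,x-z)\bigr],
\]
proved by the triangle inequality $|x-w|\le|x-z|+|z-w|$ (so that $\max(|x-z|,|z-w|)\ge|x-w|/2$), the radial monotonicity of $H^\gamma(t,\cdot)$, and the dyadic scaling $H^\gamma(t,y/2)\le 2^{d-1+2\gamma}H^\gamma(t,y)$. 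Integrating this against $|b(z)|$ and invoking Lemma \ref{thm_HMb} controls the inner $z$-integral by $c\,M_b(\sqrt t)\,H^\gamma(t,x-w)$; a second integration against $|b-b_n|(w)$ then produces $c\,M_b(\sqrt t)\,H_{b-b_n}^\gamma(t,x)$, and the symmetric computation with $H^\gamma(t,z-y)$ gives $c\,M_b(\sqrt t)\,H_{b-b_n}^\gamma(t,y)$.

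Assembling $A$ and $B$ yields a bound of the form $\bigl[C_{\CTbnUc}C^*(C_{\CTbnUexp})^{j-1}+C_{\CTpaUc}C_{\CTnablapaU}C_{\CTtherepI}(C_{\CTnablapaU}C_{\CTtherepII})^j\bigr]M_b(\sqrt t)^j[H_{b-b_n}^\gamma(t,x)+H_{b-b_n}^\gamma(t,y)]q_{d,C_{\CTpaUexp}/2}^a(t,x,y)$, where $C^*=C_{\CTnablapaU}C_{\CTtherepII}+2^{d+2\gamma}C_{\CTnablapaU}C_{\CTtherepI}C_{\CTHM}$ depends only on $d,\alpha,M,T$. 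Choosing $C_{\CTbnUexp}\ge 2C^*+2C_{\CTnablapaU}C_{\CTtherepII}$ (and keeping $C_{\CTbnUc}=C_{\CTpaUc}C_{\CTnablapaU}C_{\CTtherepI}$ from the base case) majorizes this by $C_{\CTbnUc}(C_{\CTbnUexp}M_b(\sqrt t))^j[H_{b-b_n}^\gamma(t,x)+H_{b-b_n}^\gamma(t,y)]q_{d,C_{\CTpaUexp}/2}^a(t,x,y)$, closing the induction. The chief difficulty throughout is the transfer of the $z$-dependent factor $H_{b-b_n}^\gamma(t,z)$ produced by the hypothesis to the endpoints $x$ and $y$; once the pointwise two-factor $H^\gamma$ estimate is available, the rest of the argument is bookkeeping with the 3P-type inequalities \eqref{equ_3q}--\eqref{equ_3p} and radial monotonicity of the Kato-type functionals.
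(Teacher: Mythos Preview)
Your proof is correct and follows essentially the same route as the paper's: induction on $j$, the same algebraic splitting of $p_{j+1}^{a,b_n}-p_{j+1}^{a,b}$ into a ``difference'' term and a ``$b_n-b$'' term, and the same key pointwise product inequality for $H^\gamma$ to transfer the $z$-dependent factor $H_{b-b_n}^\gamma(t,z)$ to the endpoints. The only cosmetic differences are that the paper puts $b_n$ (rather than $b$) in the difference term and $p_j^{a,b}$ (rather than $p_j^{a,b_n}$) in the other term, and records the slightly looser constant $2^{d+1}$ in place of your $2^{d-1+2\gamma}$; neither change affects the argument.
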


\begin{proof}
  We prove (\ref{equ_pkbnb}) inductively in $j$. Since $\beta \mapsto q^a_{d,\beta}(t,x,y)$ is decreasing, by Theorem \ref{thm_pabound}, for all $0<s<t\le T$ and $x,z\in \mathbb{R}^d$,
  \begin{equation*}
    p^a(t-s,x,z) \le C_{\CTpaUc}q_{d,C_{\CTpaUexp}}^a(t-s,x,z) \le C_{\CTpaUc} q_{d,C_{\CTpaUexp}/2}^a(t-s,x,z).
  \end{equation*}
We have by (\ref{equ_pndef}), Theorem \ref{thm_Gradpabound} and (\ref{equ_3q}) with $\beta_1=C_{\CTpaUexp}/2$ and $\beta_2 = 3C_{\CTpaUexp}/4$,
  \begin{eqnarray*}
    &&|p_1^{a,b_n}(t,x,y) - p_1^{a,b}(t,x,y)|\\
    &=& \bigg|\int_0^t\int_{\mathbb{R}^d} p^a(t-s,x,z)(b(z)-b_n(z))\nabla_z p^a(s,z,y) dz ds\bigg|\\
    &\le& \int_0^t\int_{\mathbb{R}^d} p^a(t-s,x,z)|b(z)-b_n(z)||\nabla_z p^a(s,z,y)| dz ds\\
    &\stackrel{C_{\CTpaUc}C_{\CTnablapaU}}{\lesssim}& \int_0^t \int_{\mathbb{R}^d} q_{d,C_{\CTpaUexp}/2}^a(t-s,x,z)|b(z)-b_n(z)|q_{d+1,3C_{\CTpaUexp}/4}^a(s,z,y) dz ds\\
    &=& \int_{\mathbb{R}^d} |b(z)-b_n(z)|\bigg(\int_0^t q_{d,C_{\CTpaUexp}/2}^a(t-s,x,z)q_{d+1,3C_{\CTpaUexp}/4}^a(s,z,y)ds \bigg) dz \\
    &\stackrel{C_{\CTtherepI}}{\lesssim}& \bigg(\int_{\mathbb{R}^d}|b(z)-b_n(z)| \left(H^{\gamma}(t,x,z) + H^{\gamma}(t,z,y) \right) dz\bigg)\cdot q_{d,C_{\CTpaUexp}/2}^a(t,x,y)\\
    &=& \left(H_{b-b_n}^{\gamma}(t,x) +H_{b-b_n}^{\gamma}(t,y)\right)q_{d,C_{\CTpaUexp}/2}^a(t,x,y).
  \end{eqnarray*}
  This proves \eqref{equ_pkbnb} for $j=1$.
  Assume (\ref{equ_pkbnb}) is true for $j=k\geq 1$.
   By (\ref{equ_pndef}),
  \begin{align*}
    &|p_{k+1}^{a,b_n}(t,x,y) - p_{k+1}^{a,b}(t,x,y)|\\
    \le& \int_0^t\int_{\mathbb{R}^d} |p_k^{a,b_n}(t-s,x,z) - p_k^{a,b}(t-s,x,z)| |b_n(z)| |\nabla_z p^a(s,z,y)| dzds\\
    &+ \int_0^t\int_{\mathbb{R}^d} |p_k^{a,b}(t-s,x,z)| |b(z) - b_n(z)| |\nabla_z p^a(s,z,y)| dzds\\
    =&: I_1 + I_2.
  \end{align*}
  Let $C_{\CTbnUc} = C_{\CTpaUc}C_{\CTnablapaU}C_{\CTtherepI}$ and $C_{\CTbnUexp} = 2^{d+3}C_{\CTHM} C_{\CTnablapaU} C_{\CTtherepI}$. Then
  \begin{eqnarray*}
    I_1 &\stackrel{C_{\CTbnUc}C_{\CTnablapaU}}{\lesssim}& \int_0^t \int_{\mathbb{R}^d} \left(C_{\CTbnUexp} M_b(\sqrt{t-s})\right)^{k-1}\left(H_{b-b_n}^{\gamma}(t-s,x) + H_{b-b_n}^{\gamma}(t-s,z)\right)\\
    && \times ~ q_{d,C_{\CTpaUexp}/2}^a(t-s,x,z) |b_n(z)| q_{d+1,3C_{\CTpaUexp}/4}^a(s,z,y) dz ds\\
    &\le& \left(C_{\CTbnUexp} M_b(\sqrt{t})\right)^{k-1} \int_{\mathbb{R}^d} \left(H_{b-b_n}^{\gamma}(t,x) + H_{b-b_n}^{\gamma}(t,z)\right) |b_n(z)| \\
    && \times \left(\int_0^t q_{d,C_{\CTpaUexp}/2}^a(t-s,x,z) q_{d+1,3C_{\CTpaUexp}/4}^a(s,z,y)  ds\right)dz\\
    &\stackrel{C_{\CTtherepI}}{\lesssim}& \left(C_{\CTbnUexp} M_b(\sqrt{t})\right)^{k-1}\int_{\mathbb{R}^d} \left(H_{b-b_n}^{\gamma}(t,x) + H_{b-b_n}^{\gamma}(t,z)\right) |b_n(z)| \\
    && \times \left(H^{\gamma}(t,x,z) + H^{\gamma}(t,z,y)\right)q_{d,C_{\CTpaUexp}/2}^a(t,x,y) dz\\
    &\le& \left(C_{\CTbnUexp} M_b(\sqrt{t})\right)^{k-1} \Big[H_{b-b_n}^{\gamma}(t,x) \left(H_{b_n}^{\gamma}(t,x) + H_{b_n}^{\gamma}(t,y)\right)\\
    &&+ \int_{\mathbb{R}^d} H_{b-b_n}^{\gamma}(t,z) |b_n(z)| \left(H^{\gamma}(t,x,z) + H^{\gamma}(t,z,y)\right)dz \Big]q_{d,C_{\CTpaUexp}/2}^a(t,x,y) \\
    &\le& \left(C_{\CTbnUexp} M_b(\sqrt{t})\right)^{k-1} \Big[2C_{\CTHM} M_b(\sqrt{t}) H_{b-b_n}^{\gamma}(t,x) + \int_{\mathbb{R}^d} \int_{\mathbb{R}^d} |b(w) - b_n(w)| |b_n(z)|\\
    &&\times H^{\gamma}(t,z,w)\left(H^{\gamma}(t,x,z) + H^{\gamma}(t,z,y)\right)dz dw\Big]q_{d,C_{\CTpaUexp}/2}^a(t,x,y)
  \end{eqnarray*}
  Note that
  \begin{eqnarray*}
    &&H^{\gamma}(t,z,w) \wedge H^{\gamma}(t,x,z)\\
    &=& \left(\frac{1}{|w-z|^{d-1}} \wedge \frac{t^{\gamma}}{|w - z|^{d-1+2\gamma}}\right) \wedge \left(\frac{1}{|z-x|^{d-1}} \wedge \frac{t^{\gamma}}{|z - x|^{d-1+2\gamma}}\right)\\
    &\stackrel{2^{d+1}}{\lesssim}& \left(\frac{1}{|w-x|^{d-1}} \wedge \frac{t^{\gamma}}{|w-x|^{d-1+2\gamma}}\right) = H^{\gamma}(t,x,w).
  \end{eqnarray*}
  Similarly,
  \begin{equation*}
    H^{\gamma}(t,z,w) \wedge H^{\gamma}(t,y,z) \stackrel{2^{d+1}}{\lesssim} H^{\gamma}(t,y,w).
  \end{equation*}
  Thus
  \begin{eqnarray*}
    &&\int_{\mathbb{R}^d} \int_{\mathbb{R}^d} |b(w) - b_n(w)| |b_n(z)| H^{\gamma}(t,z,w)\left(H^{\gamma}(t,x,z) + H^{\gamma}(t,z,y)\right)dz dw\\
    &\stackrel{2^{d+1}}{\lesssim}& \int_{\mathbb{R}^d} \int_{\mathbb{R}^d} |b(w) - b_n(w)| |b_n(z)|\Big[ H^{\gamma}(t,x,w) \left(H^{\gamma}(t,z,w) + H^{\gamma}(t,x,z)\right)\\
    && + H^{\gamma}(t,y,w)\left(H^{\gamma}(t,z,w) + H^{\gamma}(t,y,z)\right)\Big] dz dw\\
    &=& \int_{\mathbb{R}^d}|b(w) - b_n(w)| \Big[H^{\gamma}(t,x,w)\left(H_{b_n}^{\gamma}(t,w) + H_{b_n}^{\gamma}(t,x)\right)\\
    && + H^{\gamma}(t,y,w)\left(H_{b_n}^{\gamma}(t,w) + H_{b_n}^{\gamma}(t,y)\right)\Big] dw\\
    &\stackrel{2C_{\CTHM}}{\lesssim}& M_b(\sqrt{t})\int_{\mathbb{R}^d}|b(w) - b_n(w)| \left( H^{\gamma}(t,x,w)+ H^{\gamma}(t,y,w) \right) dw\\
    &=& M_b(\sqrt{t}) \left(H_{b-b_n}^{\gamma}(t,x) + H_{b-b_n}^{\gamma}(t,y) \right).
  \end{eqnarray*}
  Therefore
  \begin{eqnarray*}
    I_1 &\stackrel{C_{\CTbnUc}}{\lesssim}&\left(C_{\CTbnUexp} M_b(\sqrt{t})\right)^{k-1} 2C_{\CTHM}C_{\CTnablapaU}C_{\CTtherepI} M_b(\sqrt{t}) \\
    &&\qquad \times \left[H_{b-b_n}^{\gamma}(t,x) + 2^{d+1}\left(H_{b-b_n}^{\gamma}(t,x) + H_{b-b_n}^{\gamma}(t,y) \right)\right] q_{d,C_{\CTpaUexp}/2}^a(t,x,y)\\
    &\le&\left(2C_{\CTHM} C_{\CTnablapaU} C_{\CTtherepI} M_b(\sqrt{t})\right)^{k} 2^{(d+2)(k-1)}(2^{d+1}+1) \left(H_{b-b_n}^{\gamma}(t,x) + H_{b-b_n}^{\gamma}(t,y) \right) q_{d,C_{\CTpaUexp}/2}^a(t,x,y).
  \end{eqnarray*}
  On the other hand, by (\ref{equ_pkupbd}),
  \begin{eqnarray*}
    I_2 &\stackrel{C_{\CTpaUc}C_{\CTnablapaU}}{\lesssim}& \left(2C_{\CTHM} C_{\CTnablapaU} C_{\CTtherepI} M_b(\sqrt{t})\right)^{k}\int_0^t \int_{\mathbb{R}^d} q_{d,C_{\CTpaUexp}/2}^a(t-s,x,z) |b(z) - b_n(z)| q_{d+1,3C_{\CTpaUexp}/4}^a(s,z,y) dz ds\\
    &\stackrel{C_{\CTtherepI}}{\lesssim}& \int_0^t \int_{\mathbb{R}^d} |b(z) - b_n(z)| \left(H^{\gamma}(t,x,z) + H^{\gamma}(t,x,z) \right) q_{d,C_{\CTpaUexp}/2}^a(t,x,y) dz\\
    &=& \left(H_{b-b_n}^{\gamma}(t,x) + H_{b-b_n}^{\gamma}(t,y) \right) q_{d,C_{\CTpaUexp}/2}^a(t,x,y).
  \end{eqnarray*}
  Thus
  \begin{eqnarray*}
    &&|p_{k+1}^{a,b_n}(t,x,y) - p_{k+1}^{a,b}(t,x,y)|\\
    &\stackrel{C_{\CTbnUc}}{\lesssim}& \left(2C_{\CTHM} C_{\CTnablapaU} C_{\CTtherepI} M_b(\sqrt{t})\right)^{k} \left(2^{(d+2)(k-1)}(2^{d+1}+2) \right) \left(H_{b-b_n}^{\gamma}(t,x) + H_{b-b_n}^{\gamma}(t,y) \right) q_{d,C_{\CTpaUexp}/2}^a(t,x,y)\\
    &\le& \left(C_{\CTbnUexp} M_b(\sqrt{t})\right)^{k} \left(H_{b-b_n}^{\gamma}(t,x) + H_{b-b_n}^{\gamma}(t,y) \right) q_{d,C_{\CTpaUexp}/2}^a(t,x,y).
  \end{eqnarray*}
  This completes the proof of the lemma.
\end{proof}

\begin{lem}\label{thm_pabncnvg}
  Suppose $M > 0$ and $T > 0$. For every $a\in (0,M]$, $0 < T_0 < T$ and compact set $K \subset \mathbb{R}^d$, $p^{a,b_n}(t,x,y)$ converges to $p^{a,b}(t,x,y)$ uniformly in $[T_0, T] \times K \times K$ as $n \rightarrow \infty$.
\end{lem}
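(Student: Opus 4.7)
I would establish uniform convergence first on a small time slab via the series representation in Lemma \ref{thm_pabbounds}, and then propagate it to arbitrary finite time by induction using the Chapman-Kolmogorov equation \eqref{e:1.8}. Specifically, choose $\tau\in(0,t_*]$ small enough that $C_{\CTbnUexp}M_b(\sqrt{\tau})\le 1/2$; this is possible since $b\in\mathbb{K}_{d,1}$, and by \eqref{equ_MbnMb} the same inequality automatically holds with $b$ replaced by any $b_n$. I would then prove by induction in $k\ge 1$ that for every $T_0\in(0,k\tau)$ and every compact $K\subset\R^d$, $p^{a,b_n}\to p^{a,b}$ uniformly on $[T_0,k\tau]\times K\times K$.

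For the base case $k=1$, both series $p^{a,b}=\sum_j p_j^{a,b}$ and $p^{a,b_n}=\sum_j p_j^{a,b_n}$ converge absolutely on $(0,\tau]\times\R^d\times\R^d$ by Lemma \ref{thm_pabbounds}. Since $p_0^{a,b}=p_0^{a,b_n}=p^a$, Lemma \ref{thm_pkbn_b_upbd} and summation of the geometric series yield, for $t\in(0,\tau]$,
\begin{equation*}
|p^{a,b}(t,x,y)-p^{a,b_n}(t,x,y)|\le 2C_{\CTbnUc}\bigl(H_{b-b_n}^{\gamma}(t,x)+H_{b-b_n}^{\gamma}(t,y)\bigr)q_{d,C_{\CTpaUexp}/2}^a(t,x,y).
\end{equation*}
The factor $q_{d,C_{\CTpaUexp}/2}^a$ is bounded on $[T_0,\tau]\times K\times K$, and Lemma \ref{thm_Hbbn} delivers $H_{b-b_n}^{\gamma}\to 0$ uniformly on this set, handling the base case.

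For the inductive step $k\to k+1$, only the new range $t\in[k\tau,(k+1)\tau]$ needs attention. For such $t$, $s:=t/2\in[k\tau/2,(k+1)\tau/2]\subset[k\tau/2,k\tau]$ (using $k\ge 1$). Applying \eqref{e:1.8} to both $p^{a,b}$ and $p^{a,b_n}$ and subtracting, one writes
\begin{align*}
p^{a,b}(t,x,y)-p^{a,b_n}(t,x,y)&=\int_{\R^d}\bigl[p^{a,b}(s,x,z)-p^{a,b_n}(s,x,z)\bigr]p^{a,b}(s,z,y)\,dz\\
&\quad+\int_{\R^d}p^{a,b_n}(s,x,z)\bigl[p^{a,b}(s,z,y)-p^{a,b_n}(s,z,y)\bigr]\,dz,
\end{align*}
and splits each integral into the regions $\{|z|\le R\}$ and $\{|z|>R\}$. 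On $\{|z|\le R\}$ the induction hypothesis, applied with $T'_0=k\tau/2$ and $K'=K\cup\overline{B(0,R)}$, drives the difference factor to zero uniformly, while the remaining factor has $L^1$-norm uniformly bounded in $n$ via Theorem \ref{thm_pababsup} together with $\int p^a(s,x,z)\,dz=1$. On $\{|z|>R\}$, Theorem \ref{thm_pababsup} dominates the integrands by $c\,p^a(cs,x,z)p^a(cs,z,y)$; using $p^a(cs,x,z)\le c_1 s^{-d/2}$ together with the uniform tail decay $\sup_{y\in K}\int_{|z|>R}p^a(cs,z,y)\,dz\to 0$ as $R\to\infty$, this tail can be made arbitrarily small uniformly in $(x,y)\in K\times K$.

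The key technical point I must monitor throughout is that every constant appearing in Theorems \ref{thm_pabound}, \ref{thm_Gradpabound}, \ref{thm_pababsup}, and in Lemmas \ref{thm_pabbounds} and \ref{thm_pkbn_b_upbd}, depends on the drift only through the rate at which $M_b(r)\to 0$, so that \eqref{equ_MbnMb} renders all the estimates uniform in $n$. This is precisely what permits both the geometric-series summation in the base case and the tail control in the inductive step. Granted this, the induction yields uniform convergence on $[T_0,T]\times K\times K$ for arbitrary $0<T_0<T$ and compact $K\subset\R^d$, as claimed.
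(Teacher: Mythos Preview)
Your proposal is correct and follows essentially the same route as the paper: establish the small-time case from the series expansion and Lemma \ref{thm_pkbn_b_upbd}, then bootstrap via Chapman--Kolmogorov and a tail/core splitting, using throughout that all constants depend on the drift only through $M_b$ so that \eqref{equ_MbnMb} gives uniformity in $n$. The only cosmetic differences are that the paper uses a fixed offset $t_1=T_1/2$ and inducts over the intervals $[kT_1/2,(k+1)T_1/2]$, whereas you halve $t$ and induct over $[T_0,k\tau]$; both organizations are equivalent.
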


\begin{proof}
 We divide the proof into two parts. In the first part, we show this lemma holds on $[T_0,T_1]\times K\times K$ for some $T_1 > 0$.
  In the second part,
we prove that for any $k \ge 1$, $p^{a,b_n}(t,x,y)$ converges to $p^{a,b}(t,x,y)$ uniformly in $[kT_1/2, (k+1)T_1/2] \times K \times K$ as $n \rightarrow \infty$.
Taking $k = 2\lfloor T/T_1\rfloor + 1$ then yields the claim of the lemma.

 (i)  By (\ref{equ_pabsmalldef}) and Lemma \ref{thm_pkbn_b_upbd} with $T = 1$, for all $t \in (0,t_*]$ and $x,y \in K$,
  \begin{eqnarray*}
    && |p^{a,b_n}(t,x,y) - p^{a,b}(t,x,y)|
    \le \sum_{k=1}^{\infty} |p_k^{a,b_n}(t,x,y) - p_k^{a,b}(t,x,y)|\\
    &\le& C_{\CTbnUc}\sum_{k=1}^{\infty} \left(C_{\CTbnUexp} M_b(\sqrt{t})\right)^{k} \left(H_{b-b_n}^{\gamma}(t,x) + H_{b-b_n}^{\gamma}(t,y) \right) q_{d,C_{\CTpaUexp}/2}^a(t,x,y).
  \end{eqnarray*}
  Since $b\in \mathbb{K}_{d,1}$, there is a constant $0<T_1<t_*$ so that
   $ C_{\CTbnUexp} M_b(\sqrt{T_1}) \le 1/2$.
  Then for all $t \le T_1$ and $x,y \in \mathbb{R}^d$,
  \begin{equation}\label{equ_pbn_b_upbd}
    \begin{split}
      |p^{a,b_n}(t,x,y) - p^{a,b}(t,x,y)| &\le C_{\CTbnUc} \sum_{k=1}^\infty 2^{-(k-1)}\left(H_{b-b_n}^{\gamma}(t,x) + H_{b-b_n}^{\gamma}(t,y)\right) q_{d,C_{\CTpaUexp}/2}^a(t,x,y) \\
      &\le 2C_{\CTbnUc} \left(H_{b-b_n}^{\gamma}(t,x) + H_{b-b_n}^{\gamma}(t,y)\right) q_{d,C_{\CTpaUexp}/2}^a(t,x,y).
    \end{split}
  \end{equation}
  Without loss of generality, we may and do assume $T_0 < T_1/2$. Note that $q_{d,C_{\CTpaUexp}/2}^a(t,x,y) \le 2T_0^{-d/2}$ for $T_0 \le t \le T_1$ and
   $x,y \in \mathbb{R}^d$. By (\ref{equ_pbn_b_upbd}) and Lemma \ref{thm_Hbbn},
  \begin{equation}\label{equ_T_1}
    \begin{split}
      &\limsup_{n\rightarrow \infty} \sup_{t\in [T_0,T_1]}\sup_{x,y \in K}|p^{a,b_n}(t,x,y) - p^{a,b}(t,x,y)|\\
      &\le 4T_0^{-d/2}C_{\CTbnUc} \limsup_{n\rightarrow \infty} \sup_{x,y \in K} \left(H_{b-b_n}^{\gamma}(T_1,x) + H_{b-b_n}^{\gamma}(T_1,y)\right)
       = 0.
    \end{split}
  \end{equation}

(ii)  We prove this part inductively in $k$. Indeed, it is true when $k=1$ by Step I. Assume that for any compact set $\widetilde{K}$ and $1\le k \le j$, $p^{a,b_n}(t,x,y)$ converges to $p^{a,b}(t,x,y)$ uniformly in $[kT_1/2, (k+1)T_1/2] \times \widetilde{K} \times \widetilde{K}$ as $n \rightarrow \infty$.

  Let $t_1 = T_1/2$. By Chapman-Kolmogorov equation (\ref{e:1.8}), For every $t \in [(j+1)T_1,(j+2)T_1/2]$,
  \begin{align*}
    |p^{a,b_n}(t,x,y) - p^{a,b}(t,x,y)| \le& \int_{\mathbb{R}^d} |p^{a,b_n}(t-t_1,x,z) - p^{a,b}(t-t_1,x,z)||p^{a,b_n}(t_1,z,y)|dz\\
    &\quad+ \int_{\mathbb{R}^d} |p^{a,b}(t-t_1,x,z)| |p^{a,b_n}(t_1,z,y) - p^{a,b}(t_1,z,y)|dz\\
    =&:I_1 + I_2.
  \end{align*}
  By Theorem \ref{thm_pababsup} and (\ref{equ_MbnMb}), for every $t \in [(j+1)T_1,(j+2)T_1/2]$,
  \begin{align*}
    &|p^{a,b_n}(t-t_1,x,z)| \le C_{\CTpabAbsUcq}e^{C_{\CTpabAbsUexp}(t-t_1)}q_{d,C_{\CTpaUexp}^2/(2C_{\CTpaLexp})}^a(t-t_1,x,z) \le 2C_{\CTpabAbsUcq}e^{C_{\CTpabAbsUexp}(j+1)T_1/2}(jT_1/2)^{-d/2},\\
    &|p^{a,b}(t-t_1,x,z)| \le 2C_{\CTpabAbsUcq}e^{C_{\CTpabAbsUexp}(j+1)T_1/2}(jT_1/2)^{-d/2},
  \end{align*}
  and, for any $\varepsilon > 0$, there is a constant $R_0 > 0$ such that for all $n \ge 1$ and $y \in \mathbb{R}^d$,
  \begin{equation*}
    \int_{|z-y| \ge R_0} |p^{a,b_n}(t_1,z,y)| dz < \frac{\varepsilon}{8C_{\CTpabAbsUcq}e^{C_{\CTpabAbsUexp}(j+1)T_1/2}(jT_1/2)^{-d/2}}.
  \end{equation*}
  On the other hand, $p^{a,b_n}(t,x,y)$ converges to $p^{a,b}(t,x,y)$ uniformly in $[jT_1/2, (j+1)T_1/2] \times K^{R_0} \times K^{R_0}$ as $n \rightarrow \infty$ since $K^{R_0}$ is bounded. Note that $t - t_1 \in [jT_1/2, (j+1)T_1/2]$. Then for all large enough $n$,
  \begin{equation*}
    \sup_{x,z\in K^{R_0}} |p^{a,b_n}(t-t_1,x,z) - p^{a,b}(t-t_1,x,z)| < \frac{\varepsilon}{2C_{\CTpabAbsUcp}e^{C_{\CTpabAbsUexp}t_1}},
  \end{equation*}
  while by Theorem \ref{thm_pababsup},
  \begin{equation*}
    \int_{\mathbb{R}^d} |p^{a,b_n}(t_1,z,y)| dz \le C_{\CTpabAbsUcp}e^{C_{\CTpabAbsUexp}t_1} \int_{\mathbb{R}^d}p^a({2C_{\CTpaLexp}t_1}/{C_{\CTpaUexp}},z,y) dz = C_{\CTpabAbsUcp}e^{C_{\CTpabAbsUexp}t_1}
  \end{equation*}
   for all $y \in \mathbb{R}^d$. Thus we have for all $(x,y)\in K\times K$
  \begin{align*}
    I_1 \le& \left( \int_{|z-y| \ge R_0}+ \int_{|z-y|< R_0} \right)
    |p^{a,b_n}(t-t_1,x,z) - p^{a,b}(t-t_1,x,z)| |p^{a,b_n}(t_1,z,y)|dz \\
    \le& 4C_{\CTpabAbsUcq}e^{C_{\CTpabAbsUexp}(j+1)T_1/2}(jT_1/2)^{-d/2} \cdot \frac{\varepsilon}{8C_{\CTpabAbsUcq}e^{C_{\CTpabAbsUexp}(j+1)T_1/2}(jT_1/2)^{-d/2}} \\
    & + \int_{K^{R_0}}
    |p^{a,b_n}(t-t_1,x,z) - p^{a,b}(t-t_1,x,z)| |p^{a,b_n}(t_1,z,y)|dz \\
    \le& \frac{\varepsilon}{2} + C_{\CTpabAbsUcp}e^{C_{\CTpabAbsUexp}t_1}\cdot \frac{\varepsilon}{2C_{\CTpabAbsUcp}e^{C_{\CTpabAbsUexp}t_1}}\\
    =& \,\varepsilon.
  \end{align*}
  Similarly, we can get $I_2 < \varepsilon$ for large enough $n$. Thus, we have proved that $p^{a,b_n}(t,x,y)$ converges to $p^{a,b}(t,x,y)$ uniformly in $[(j+1)T_1, (j+2)T_1/2] \times K \times K$ as $n \rightarrow \infty$.
\end{proof}

Lemma \ref{thm_pab'pos} and Lemma \ref{thm_pabncnvg} immediately yield  the following.

\begin{lem}\label{thm_pabpositive}
  Let $M > 0$. For every $a\in (0,M]$,
  \begin{equation*}
    p^{a,b}(t,x,y) \ge 0,\quad t > 0 \text{ and } x,y\in \mathbb{R}^d.
  \end{equation*}
\end{lem}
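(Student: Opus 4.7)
The plan is to combine the two previous lemmas, which effectively do all the work: Lemma \ref{thm_pab'pos} gives the positivity for bounded continuous drift, and Lemma \ref{thm_pabncnvg} provides the approximation needed to pass to general $b \in \mathbb{K}_{d,1}$. There is no real obstacle, just a verification that the mollifications $b_n$ fall under the hypothesis of Lemma \ref{thm_pab'pos}.

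First I would check that each $b_n = \varphi_n * b$ is bounded and continuous on $\mathbb{R}^d$. Smoothness (and hence continuity) is standard for a convolution with a compactly supported mollifier. For boundedness, the Kato hypothesis gives
\begin{equation*}
   |b_n(x)| \le \|\varphi_n\|_\infty \int_{|x-y|<1/n} |b(y)|\, dy
   \le n^d \|\varphi\|_\infty (1/n)^{d-1} \sup_{x\in \R^d}\int_{|x-y|<1/n} \frac{|b(y)|}{|x-y|^{d-1}} dy = n\|\varphi\|_\infty M_b(1/n),
\end{equation*}
which is finite since $b\in\K_{d,1}$. Therefore Lemma \ref{thm_pab'pos} applies to $b_n$ and yields
\begin{equation*}
  p^{a,b_n}(t,x,y) \ge 0 \quad \text{for all } t>0 \text{ and } x,y \in \mathbb{R}^d.
\end{equation*}

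Next, by Lemma \ref{thm_pabncnvg}, for any fixed $(t,x,y) \in (0,\infty)\times \R^d\times \R^d$ one can take a compact neighborhood $[T_0, T]\times K\times K$ containing $(t,x,y)$ and obtain $p^{a,b_n}(t,x,y) \to p^{a,b}(t,x,y)$ as $n\to \infty$. Passing to the limit in the inequality $p^{a,b_n}(t,x,y)\ge 0$ yields $p^{a,b}(t,x,y)\ge 0$, which completes the proof. Since every step is a direct invocation of an already-proved result, no technical obstacle remains; the real work was done in establishing Lemmas \ref{thm_pab'pos} and \ref{thm_pabncnvg}.
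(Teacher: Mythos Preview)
Your proof is correct and follows exactly the approach of the paper, which simply states that Lemmas \ref{thm_pab'pos} and \ref{thm_pabncnvg} immediately yield the result. Your additional verification that each mollification $b_n$ is bounded and continuous (so that Lemma \ref{thm_pab'pos} applies) is a helpful detail that the paper leaves implicit.
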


\medskip

\renewcommand{\proofname}{\bf{Proof of Theorem \ref{T:main}}}
\begin{proof}
  Theorem \ref{T:main} follows from (\ref{equ_pabsmalldef}), Theorem \ref{thm_pabctscsvtv}, Lemma \ref{thm_pabpositive}, Theorem \ref{thm_generator} and Theorem \ref{thm_hkUnique}.
\end{proof}
\renewcommand{\proofname}{Proof}

\section{Lower bound estimates}

In this section, we derive the sharp lower bound of the heat kernel $p^{a,b}(t,x,y)$.
By Lemmas \ref{thm_feller} and \ref{thm_pabpositive},
$P^{a,b}$ is a Feller semigroup in $C_{\infty}(\mathbb{R}^d)$.
Therefore there is
a conservative Feller process $X^{a,b}=\{X_t^{a,b},t\ge 0,\mathbb{P}_x, x\in \mathbb{R}^d\}$
so that
$$
\mathbb{E}_x \left[ f(X_t^{a,b}) \right]
= P_t^{a,b}f(x) = \int_{\R^d} p^{a, b}(t, x, y) f(y) dy
\quad \hbox{for } x\in \mathbb{R}^d \hbox{ and }f\in C_\infty(\mathbb{R}^d).
$$

 The following lemmas will be used to derive the L\'{e}vy system of $X^{a,b}$.

\begin{lem}\label{thm_mar_P}
  For every $f \in \mathbb{K}_{d,1}$,
    $ \lim_{t\rightarrow 0}
    \sup_{x\in \mathbb{R}^d} \int_0^t P^{a,b}_s|f|(x) ds = 0$.
\end{lem}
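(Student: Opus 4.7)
The strategy is to replace $P^{a,b}_s|f|(x)$ by a purely analytic bound and then show the resulting time--space integral tends to $0$ uniformly in $x$. By Theorem \ref{thm_pababsup} (used with $T=1$, say), there is a constant $c_1$ such that for all $s\in(0,1]$ and $x\in\R^d$,
\begin{equation*}
  P^{a,b}_s |f|(x)\;\le\; c_1\int_{\R^d} q^a_{d,\beta}(s,x-y)\,|f(y)|\,dy
  \;=\; c_1\int_{\R^d}\bigl(g_{d,\beta}(s,x-y)+J^a(s,x-y)\bigr)|f(y)|\,dy,
\end{equation*}
where $\beta=C_{\CTpaUexp}^2/(2C_{\CTpaLexp})$ and $J^a(s,z):=s^{-d/2}\wedge(a^\alpha s/|z|^{d+\alpha})$. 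Thus it suffices to show that each of $\sup_x\int_0^t\!\!\int g_{d,\beta}(s,x-y)|f(y)|dyds$ and $\sup_x\int_0^t\!\!\int J^a(s,x-y)|f(y)|dyds$ vanishes as $t\downarrow 0$.

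For the Gaussian piece, I would use the elementary bound $s^{-d/2}=\sqrt{s}\cdot s^{-(d+1)/2}\le\sqrt{t}\,s^{-(d+1)/2}$ for $0<s\le t$, giving
\begin{equation*}
  \int_0^t g_{d,\beta}(s,z)\,ds\;\le\;\sqrt{t}\int_0^t s^{-(d+1)/2}e^{-\beta|z|^2/s}\,ds\;=\;\sqrt{t}\,N^{\beta}(t,z).
\end{equation*}
Fubini and Lemma \ref{thm_Kequi} then yield $\sup_x\int_0^t\!\!\int g_{d,\beta}(s,x-y)|f(y)|dyds\le \sqrt{t}\sup_x\int |f(y)|N^{\beta}(t,x-y)dy\to 0$.

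For the jump piece, I would split the domain according to whether $|x-y|^2<s$ or $|x-y|^2\ge s$, using in the first region that $J^a\le s^{-d/2}$ and in the second that $J^a\le a^\alpha s/|x-y|^{d+\alpha}$. In the first region, $1\le s^{(d-1)/2}|x-y|^{-(d-1)}$, so
\begin{equation*}
\int_0^t s^{-d/2}\!\!\int_{|x-y|^2<s}|f(y)|\,dy\,ds\;\le\;\int_0^t s^{-1/2}M_f(\sqrt s)\,ds\;\le\;2\sqrt{t}\,M_f(\sqrt{t}).
\end{equation*}
In the second region, after swapping order of integration and carrying out the $s$-integral on $[0,t\wedge|x-y|^2]$, I get $\tfrac{a^\alpha}{2}\int|f(y)|(t\wedge|x-y|^2)^2/|x-y|^{d+\alpha}dy$. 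Splitting $|x-y|<\sqrt t$ vs $|x-y|\ge\sqrt t$ and comparing with $H^{(1+\alpha)/2}(t,x-y)=|x-y|^{-(d-1)}\wedge t^{(1+\alpha)/2}/|x-y|^{d+\alpha}$, a direct computation (using $\alpha<2$ so $5-\alpha>0$) gives $(t\wedge|x-y|^2)^2/|x-y|^{d+\alpha}\le t^{(3-\alpha)/2}H^{(1+\alpha)/2}(t,x-y)$ for $t\le 1$. Then Lemma \ref{thm_HMb} yields the second region bound by $c\,a^\alpha t^{(3-\alpha)/2}M_f(\sqrt{t})$.

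Since $f\in\mathbb{K}_{d,1}$ implies $M_f(\sqrt{t})\to 0$ as $t\downarrow 0$, all pieces vanish uniformly in $x$, proving the lemma. The only place one must be a little careful is the jump-piece computation on $\{|x-y|<\sqrt t\}$, where the apparent singularity at the origin must be absorbed into the Kato quantity $M_f$; everything else is routine bookkeeping using the tools already developed.
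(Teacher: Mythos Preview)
Your proof is correct and follows essentially the same approach as the paper: bound $p^{a,b}$ via Theorem~\ref{thm_pababsup}, handle the Gaussian part with $\sqrt{t}\,N^\beta$ and Lemma~\ref{thm_Kequi}, and reduce the jump part on $\{|x-y|^2\ge s\}$ to $t^{(3-\alpha)/2}H^{(1+\alpha)/2}_{|f|}$ and Lemma~\ref{thm_HMb}. The only cosmetic difference is that the paper invokes the equivalence~(\ref{equ_qeqvlnt}) to absorb the region $\{|x-y|^2<s\}$ into the Gaussian term up front, whereas you estimate that region directly by $2\sqrt{t}\,M_f(\sqrt{t})$; both are fine.
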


\begin{proof}
  By Theorem \ref{thm_pababsup}
 and (\ref{equ_qeqvlnt}), for $0<t<1$ and $x\in \mathbb{R}^d$,
  \begin{align*}
    \int_0^t P^{a,b}_s|f|(x)ds \le&  C_{\CTpabAbsUcq} e^{C_{\CTpabAbsUexp}t} \int_0^t \int_{\mathbb{R}^d} q_{d,C_{\CTpaUexp}^2/(2C_{\CTpaLexp})}^a(s,x,y) f(y) dy ds\\
    \le& C_{\CTqEqv}C_{\CTpabAbsUcq} e^{C_{\CTpabAbsUexp}t}\left(\!\!\sqrt{t}\!\! \int_{\mathbb{R}^d} |f(y)|N^{C_{\CTpaUexp}^2/(2C_{\CTpaLexp})}(t,x,y)dy + \!\!\int_0^t\!\! \int_{|x-y|^2\ge s} \!\!\frac{a^\alpha s|f|(y)}{|x-y|^{d+\alpha}} dy ds\right)\\
    \le& C_{\CTqEqv}C_{\CTpabAbsUcq} e^{C_{\CTpabAbsUexp}t} \left(\!\!\sqrt{t}\!\! \sup_{x\in\mathbb{R}^d}\!\!\int_{\mathbb{R}^d} |f(y)|N^{C_{\CTpaUexp}^2/(2C_{\CTpaLexp})}(t,x,y)dy + t^{(3-\alpha)/2} H_{|f|}^{(1+\alpha)/2}(t,x)\right).
  \end{align*}
  Thus, by Lemma \ref{thm_Kequi}
  \begin{equation*}
    \lim_{t\rightarrow 0} \sup_{x\in \mathbb{R}^d} \int_0^t P^{a,b}_s|f|(x) ds \le \lim_{t\rightarrow 0} C_{\CTqEqv}C_{\CTpabAbsUcq} \left(\sqrt{t}N_{|f|}^{C_{\CTpaUexp}^2/(2C_{\CTpaLexp})}(t) + t^{(3-\alpha)/2}\sup_{x\in \mathbb{R}^d} H_{|f|}^{(1+\alpha)/2}(t,x)\right) = 0.
  \end{equation*}
\end{proof}

Using Lemma \ref{thm_feller}, Lemma \ref{thm_mar_P} and Theorem
\ref{thm_generator}, the proof of the following result is very similar to that
of \cite[Theorem 2.5]{ChenKimSong.2012} so it is omitted.

\begin{lem}\label{thm_martingale1}
  Suppose $M > 0$. For every $a\in (0,M]$, $x\in \mathbb{R}^d$ and every $f\in C_c^\infty(\mathbb{R}^d)$,
  \begin{equation*}
    M_t^f:= f(X_t^{a,b}) - f(X_0^{a,b}) - \int_0^t \mathcal{L}^{a,b}f(X_s^{a,b}) ds
  \end{equation*}
  is a martingale under $\mathbb{P}_x$.
\end{lem}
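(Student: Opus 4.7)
The plan is to first establish the integrated Kolmogorov forward equation
\[
P_t^{a,b}f(x) - f(x) = \int_0^t P_s^{a,b}(\mathcal{L}^{a,b}f)(x)\,ds, \qquad t>0,\ x\in\R^d,
\]
for every $f\in C_c^\infty(\R^d)$, and then deduce the martingale property of $M^f$ by the Markov property of $X^{a,b}$. As a preliminary, observe that since $f \in C_c^\infty(\R^d)$, both $\Delta f$ and $\Delta^{\alpha/2}f$ are bounded and vanish at infinity, while $b \cdot \nabla f$ is dominated by $\|\nabla f\|_\infty\, |b|\,\mathbf{1}_{\mathrm{supp}(f)}$, which lies in $\K_{d,1}$. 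Lemma \ref{thm_mar_P} then yields $\int_0^t P_s^{a,b}|\mathcal{L}^{a,b}f|(x)\,ds < \infty$ for every $(t,x)$; in particular $\int_0^t|\mathcal{L}^{a,b}f(X_s^{a,b})|\,ds < \infty$ $\P_x$-a.s., so $M_t^f$ is well defined.

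For the forward equation, I would fix $g\in C_c^\infty(\R^d)$ and set $F(t):=\int_{\R^d} P_t^{a,b}f(x)\,g(x)\,dx$. By the Chapman--Kolmogorov identity \eqref{e:1.8} and Fubini,
\[
\frac{F(t+h)-F(t)}{h}=\int_{\R^d}\frac{P_h^{a,b}f(y)-f(y)}{h}\,\widetilde{g}_t(y)\,dy, \qquad \widetilde{g}_t(y):=\int_{\R^d} p^{a,b}(t,x,y)\,g(x)\,dx.
\]
The heat kernel bounds of Theorem \ref{thm_pababsup}, together with the compact support of $g$, imply $\widetilde{g}_t\in C_\infty(\R^d)$. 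Theorem \ref{thm_generator}, applied to $f\in C_c^\infty$ against the test function $\widetilde{g}_t$, followed by another use of Fubini, then gives
\[
\lim_{h\to 0}\frac{F(t+h)-F(t)}{h}=\int_{\R^d}\mathcal{L}^{a,b}f(y)\,\widetilde{g}_t(y)\,dy=\int_{\R^d} g(x)\,P_t^{a,b}(\mathcal{L}^{a,b}f)(x)\,dx.
\]
So $F$ is differentiable on $(0,\infty)$, with $F'\in L^1_{\mathrm{loc}}([0,\infty))$ (using Theorem \ref{thm_pababsup} and Lemma \ref{thm_mar_P} to control the behavior near $0$). Integrating this identity and using strong continuity of $P_t^{a,b}$ in $C_\infty$ (Lemma \ref{thm_feller}) to pass to the limit at the left endpoint yields
\[
\int_{\R^d} g(x)\bigl[P_t^{a,b}f(x)-f(x)\bigr]\,dx=\int_{\R^d} g(x)\int_0^t P_s^{a,b}(\mathcal{L}^{a,b}f)(x)\,ds\,dx.
\]
Since $g\in C_c^\infty$ is arbitrary and both sides are continuous in $x$, the pointwise forward equation follows.

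For the martingale property, I would combine the Markov property of $X^{a,b}$ with the forward equation: for $0\le s \le t$,
\[
\E_x\!\left[f(X_t^{a,b})\,\big|\,\mathcal{F}_s\right]=P_{t-s}^{a,b}f(X_s^{a,b})=f(X_s^{a,b})+\int_0^{t-s} P_r^{a,b}(\mathcal{L}^{a,b}f)(X_s^{a,b})\,dr,
\]
which, after another application of the Markov property to the time integral $\int_s^t\mathcal{L}^{a,b}f(X_u^{a,b})\,du$, rearranges to $\E_x[M_t^f\mid\mathcal{F}_s]=M_s^f$. The main obstacle is the differentiation step: Theorem \ref{thm_generator} provides only a weak generator relation for test functions in $C_c^\infty$, while $P_t^{a,b}f$ generally belongs merely to $C_\infty$, so one cannot differentiate $t\mapsto P_t^{a,b}f$ directly. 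The device above---transferring the difference quotient from $P_{t+h}^{a,b}f-P_t^{a,b}f$ onto the auxiliary $\widetilde{g}_t\in C_\infty$ via Chapman--Kolmogorov---circumvents this issue, letting one invoke Theorem \ref{thm_generator} directly at $f\in C_c^\infty$.
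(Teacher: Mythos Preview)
Your proposal is correct and follows essentially the same route as the paper's (omitted) proof, which cites \cite[Theorem 2.5]{ChenKimSong.2012} and relies on exactly the three ingredients you use: Lemma \ref{thm_mar_P} for integrability of $\mathcal{L}^{a,b}f$ along paths, Theorem \ref{thm_generator} for the weak generator identity, and Lemma \ref{thm_feller} for the Feller/Markov structure. The only minor imprecision is that your Chapman--Kolmogorov trick yields the \emph{right} derivative $D^+F(t)=G(t)$; to integrate you should note that $G$ is continuous on $(0,\infty)$ (dominated convergence, using that $b\cdot\nabla f$ is supported in $\mathrm{supp}(f)$ and $b\in L^1_{\mathrm{loc}}$) and locally integrable at $0$ by Lemma \ref{thm_mar_P}, which then justifies $F(t)-F(0)=\int_0^t G$.
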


Using above lemma, the proof of next theorem is very similar to that
for \cite[Lemma 4.7]{ChenKumagai.2003} and \cite[Appendix A]{ChenKumagai.2008};
see \cite[Theorem 2.6]{ChenKimSong.2012} for some details.

\begin{thm}\label{thm_levysystem}
  For $M > 0$ and every $a\in (0,M]$, $X^{a,b}$ has the same L\'{e}vy system as $Z^a$, that is for any $x \in \mathbb{R}^d$, any non-negative measure function $f$ on $\mathbb{R}_+\times \mathbb{R}^d\times \mathbb{R}^d$ vanishing on $\{(s,x,y) \in \mathbb{R}_+\times \mathbb{R}^d\times \mathbb{R}^d: x=y\}$ and stopping time $S$ (with respect to the filtration of $X^{a,b}$),
  \begin{equation*}
    \mathbb{E}_x\left[\sum_{s\le S} f(s,X_{s-}^{a,b},X_{s}^{a,b})\right] = \mathbb{E}_x \left[\int_0^S \int_{\mathbb{R}^d} f(s,X_{s}^{a,b},y) J^a(X_s^{a,b},y)dy ds\right],
  \end{equation*}
\end{thm}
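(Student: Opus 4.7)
My plan is to follow the standard approach for identifying Lévy systems from martingale characterizations, as developed in \cite{ChenKumagai.2003, ChenKumagai.2008} and recalled in \cite[Theorem 2.6]{ChenKimSong.2012}. The essential input is Lemma \ref{thm_martingale1} together with the explicit form of the non-local part of $\mathcal{L}^{a,b}$: for any $\phi \in C_c^\infty(\mathbb{R}^d)$ and any open set $U$ with $\mathrm{supp}(\phi) \cap U = \emptyset$, the local terms $\Delta \phi$ and $b \cdot \nabla \phi$ vanish identically on $U$, so for every $x \in U$,
\begin{equation*}
\mathcal{L}^{a,b} \phi(x) = a^\alpha \Delta^{\alpha/2}\phi(x) = \int_{\mathbb{R}^d} \phi(y)\, J^a(x,y)\, dy.
\end{equation*}
This cleanly isolates the jump kernel of the process.

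Next, I would fix two disjoint bounded open sets $E_1, E_2$ with positive distance, and a cutoff $\phi \in C_c^\infty(\mathbb{R}^d)$ which equals $1$ on a neighborhood of $\overline{E_2}$ and vanishes on a neighborhood of $\overline{E_1}$. Applying optional stopping to the martingale of Lemma \ref{thm_martingale1} at the successive exit/entrance times of the excursions of $X^{a,b}$ between $E_1$ and $\mathbb{R}^d\setminus E_1$, using the identity above for $\mathcal{L}^{a,b}\phi$ on $E_1$, yields (by the Benveniste--Jacod/Chen--Kumagai scheme)
\begin{equation*}
\mathbb{E}_x\!\left[\sum_{s\le t} \mathbf{1}_{E_1}(X_{s-}^{a,b})\mathbf{1}_{E_2}(X_s^{a,b})\right] = \mathbb{E}_x\!\left[\int_0^t \mathbf{1}_{E_1}(X_s^{a,b}) \int_{E_2} J^a(X_s^{a,b},y)\,dy\,ds\right].
\end{equation*}
A standard monotone class / predictable approximation argument then upgrades this to general non-negative Borel $f(s,x,y)$ vanishing on the diagonal at a deterministic time $t$, and a further optional stopping argument replaces $t$ by the stopping time $S$.

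The main obstacle I anticipate is not algebraic but regulatory: the kernel $J^a(x,y)$ is non-integrable at $y = x$, so the ``jump martingale'' side of the identity is only a priori finite for $f$ supported away from the diagonal. I would handle this by first proving the identity for $f$ of the form $f(s,x,y) = g(s)\mathbf{1}_{\{|x-y|>\varepsilon\}}\psi(x)\chi(y)$, where the compensator is bounded, and then letting $\varepsilon \downarrow 0$ using monotone convergence together with the fact that $X^{a,b}$ has only finitely many jumps of size greater than $\varepsilon$ in any bounded interval (a consequence of being a Feller process with finite Lévy measure on $\{|y|>\varepsilon\}$). A secondary subtlety is verifying the excursion argument for $E_1$ bounded but with $x \notin E_1$, where one must show that the expected number of entrances into $E_1$ before time $t$ is finite; this follows from the upper heat kernel bound in Theorem \ref{thm_pababsup} together with the fact that $\int_{E_1} J^a(z,y)dy$ is bounded uniformly in $z$ away from $E_1$.
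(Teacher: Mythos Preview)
Your proposal is correct and follows essentially the same approach as the paper, which simply refers to \cite[Lemma 4.7]{ChenKumagai.2003}, \cite[Appendix A]{ChenKumagai.2008} and \cite[Theorem 2.6]{ChenKimSong.2012} for the standard excursion/monotone class argument. Your key observation---that on $U$ disjoint from $\mathrm{supp}(\phi)$ the local terms $\Delta\phi$ and $b\cdot\nabla\phi$ vanish so that $\mathcal{L}^{a,b}\phi(x)=\int_{\mathbb{R}^d}\phi(y)J^a(x,y)\,dy$---is exactly the input needed to run that scheme from Lemma \ref{thm_martingale1}.
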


For an open set  $U\subset \mathbb{R}^d$, define
$$
\tau^{a,b}_U := \inf\{t>0:X^{a,b}_t \notin U\} \quad \hbox{and} \quad
 \sigma_{U}^{a,b} = \inf\{t \ge 0: X_t^{a,b} \in U\}.
$$

\begin{lem}\label{thm_exittime}
  For each $M > 0$ and $R_0 > 0$, there is a constant $\kappa = \kappa(d,\alpha,M,R_0,b) < 1$ depending on $b$ only via the rate at which $M_b(r)$ goes to zero such that for all $a \in (0,M]$, $r \in (0,R_0]$ and all $x\in \mathbb{R}^d$,
  \begin{equation}\label{e:5.3}
    \mathbb{P}_x\left(\tau_{B(x,r)}^{a,b} \le \kappa r^2 \right) \le \frac{1}{2}.
  \end{equation}
\end{lem}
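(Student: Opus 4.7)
The plan is to combine a Dynkin--optional-stopping argument against a radial bump test function with a L\'{e}vy-system estimate on large jumps out of $B(x,r)$. Fix once and for all a cutoff $\psi \in C_c^{\infty}(\mathbb{R})$ with $0 \le \psi \le 1$, $\psi(u)=0$ for $u \le 1/2$ or $u \ge 4$, and $\psi(u)=1$ on $[1,3]$. For $x\in\mathbb{R}^d$ and $r>0$ set $f_{x,r}(y):=\psi(|y-x|/r)\in C_c^{\infty}(\mathbb{R}^d)$; by scaling,
\begin{equation*}
\|\nabla f_{x,r}\|_{\infty}\le c_1/r,\qquad \|\Delta f_{x,r}\|_{\infty}\le c_2/r^2,\qquad \|\Delta^{\alpha/2}f_{x,r}\|_{\infty}\le c_3/r^{\alpha},
\end{equation*}
with $\nabla f_{x,r}$ supported in $\{r/2\le|y-x|\le r\}\cup\{3r\le|y-x|\le 4r\}$. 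Put $\tau:=\tau_{B(x,r)}^{a,b}\wedge t$ for $t>0$ to be chosen. Since $f_{x,r}(x)=0$, Lemma \ref{thm_martingale1} together with optional stopping gives
\begin{equation*}
\mathbb{E}_x\bigl[f_{x,r}(X_{\tau}^{a,b})\bigr]=\mathbb{E}_x\!\left[\int_0^{\tau}\mathcal{L}^{a,b}f_{x,r}(X_s^{a,b})\,ds\right].
\end{equation*}

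On $\{\tau_{B(x,r)}^{a,b}\le t\}\cap\{|X_{\tau_{B(x,r)}^{a,b}}^{a,b}-x|\le 3r\}$ we have $f_{x,r}(X_{\tau}^{a,b})=1$, whereas the complementary event forces a jump of size $>2r$ (the pre-jump location lies in $\overline{B(x,r)}$), which is handled by Theorem \ref{thm_levysystem}:
\begin{equation*}
\mathbb{P}_x\bigl(\tau_{B(x,r)}^{a,b}\le t,\; |X_{\tau_{B(x,r)}^{a,b}}^{a,b}-x|>3r\bigr)\le \mathbb{E}_x\!\left[\int_0^t\!\!\int_{|X_s^{a,b}-y|>2r}\!J^a(X_s^{a,b},y)\,dy\,ds\right]\le c_4 a^{\alpha} t r^{-\alpha}.
\end{equation*}
Since $X_s^{a,b}\in B(x,r)$ for $s<\tau$, one has $|\mathcal{L}^{a,b}f_{x,r}(X_s^{a,b})|\le c_2/r^2+c_3 a^{\alpha}/r^{\alpha}+(c_1/r)|b(X_s^{a,b})|$, and taking expectations yields
\begin{equation*}
\mathbb{P}_x(\tau_{B(x,r)}^{a,b}\le t)\le \frac{c_2 t}{r^2}+\frac{(c_3+c_4)a^{\alpha}t}{r^{\alpha}}+\frac{c_1}{r}\int_0^t P_s^{a,b}|b|(x)\,ds.
\end{equation*}

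Set $t=\kappa r^2$ with $\kappa\in(0,1)$ to be chosen. Using $a\le M$, $r\le R_0$, and $\alpha<2$, the first two terms are bounded by $C_1(d,\alpha,M,R_0)\kappa$. The proof of Lemma \ref{thm_mar_P} in fact delivers the quantitative bound $\int_0^t P_s^{a,b}|b|(x)\,ds\le c_5(d,\alpha,M,R_0)\sqrt{t}\,M_b(\sqrt{t})$ for $t\le R_0^2$, so after dividing by $r=\sqrt{t/\kappa}$ the drift contribution is at most $C_2(d,\alpha,M,R_0)\sqrt{\kappa}\,M_b(\sqrt{\kappa}\,R_0)$. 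Because $b\in\mathbb{K}_{d,1}$, $M_b(\sqrt{\kappa}\,R_0)\to 0$ as $\kappa\downarrow 0$; hence $\kappa=\kappa(d,\alpha,M,R_0,b)\in(0,1)$ may be chosen small enough to make the right-hand side at most $1/2$, and by construction the dependence on $b$ enters only through the function $M_b$. The main technical obstacle is that, since $b$ may be singular, the drift term is not trivially bounded by $c\,t\|b\|_{\infty}/r$; the Kato-class estimate from Lemma \ref{thm_mar_P} is precisely what supplies the factor $\sqrt{t}\,M_b(\sqrt{t})$ needed to match the critical scale $t\asymp r^2$.
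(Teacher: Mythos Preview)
Your argument is correct and does not suffer from circularity: Lemma~\ref{thm_martingale1} and Theorem~\ref{thm_levysystem}, which you invoke, are established in the paper before Lemma~\ref{thm_exittime} and depend only on the heat kernel upper bound and the generator identification, not on any exit-time estimate. The optional-stopping step is justified because $\sup_{s\le t}|M_s^{f_{x,r}}|\le 2\|f_{x,r}\|_\infty+\int_0^t|\mathcal{L}^{a,b}f_{x,r}(X_s^{a,b})|\,ds$ has finite expectation by the very Kato estimate you quote from Lemma~\ref{thm_mar_P}. The L\'evy-system bound for the overshoot event is also clean: on $\{|X_{\tau}^{a,b}-x|>3r\}$ the jump at $\tau$ has size exceeding $2r$, and the expected number of such jumps up to time $t$ is exactly $c_4 a^\alpha t r^{-\alpha}$ by Theorem~\ref{thm_levysystem}. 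Your reading of the proof of Lemma~\ref{thm_mar_P} is accurate: combining \eqref{equ_Nupper} and Lemma~\ref{thm_HMb} with the display there yields $\int_0^t P_s^{a,b}|b|(x)\,ds\le c_5(\sqrt{t}+t^{(3-\alpha)/2})M_b(\sqrt{t})\le c_5'\sqrt{t}\,M_b(\sqrt{t})$ for $t\le R_0^2$.

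The paper, however, proceeds quite differently. It never touches a test function or the L\'evy system here; instead it uses the elementary strong-Markov inequality
\[
\mathbb{P}_x\big(\tau_{B(x,r)}^{a,b}\le t\big)\le 2\sup_{s\le t}\sup_{z\in\mathbb{R}^d}\mathbb{P}_z\big(|X_s^{a,b}-z|\ge r/2\big),
\]
and then bounds the right-hand side directly by integrating the heat kernel upper estimate of Theorem~\ref{thm_pababsup} over $\{|y-z|\ge r/2\}$, which after the substitution $t=\kappa r^2$ gives a tail integral in the single variable $\rho=|z-y|/\sqrt{t}$ that vanishes as $\kappa\downarrow 0$. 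This is shorter and uses only the pointwise heat kernel bound already in hand. Your Dynkin-plus-L\'evy-system route is more ``generator-level'': it avoids the explicit two-sided kernel estimate and would transfer to situations where one has the martingale problem and a L\'evy system but no sharp transition density bound. Within this paper, though, the direct heat-kernel approach is the more economical choice.
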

\begin{proof}
  By the strong Markov property of $X^{a,b}$ (See \cite[Exercise (8.17), pp. 43-44]{BlumenthalGetoor.1968}), for $x\in \mathbb{R}^d$ and $t>0$, we have
  \begin{align*}
    \mathbb{P}_x\left(\tau_{B(x,r)}^{a,b} \le t\right) =& \mathbb{P}_x\left(\tau_{B(x,r)}^{a,b} \le t, X_t^{a,b} \in B(x,r/2)\right) + \mathbb{P}_x\left(\tau_{B(x,r)}^{a,b} \le t, X_t^{a,b} \in B(x,r/2)^c\right)\\
    \le&\mathbb{E}_x\Big[\mathbb{P}_{X_{\tau_{B(x,r)}^{a,b}}^{a,b}}\left(\left|X_{t-\tau_{B(x,r)}^{a,b}}^{a,b} - X_0^{a,b} \right| \ge r/2\right), \tau_{B(x,r)}^{a,b} \le t\Big]\\
    &\quad + \mathbb{P}_x\left(\left|X_{t}^{a,b} - X_0^{a,b} \right| \ge r/2\right)\\
    \le& 2\sup_{s\le t}\sup_{x\in \mathbb{R}^d} \mathbb{P}_x\left(\left|X_{s}^{a,b} - X_0^{a,b} \right| \ge r/2\right).
  \end{align*}
   By \eqref{equ_pabAbsUp}
  with $T=R_0^2$, for $t \in (0,R_0^2]$, there are positive constants $c_i,i=1,2,3$ depending only on $d,\alpha,M,R_0$ such that
  \begin{eqnarray*}
    &&\sup_{s\le t}\sup_{x\in \mathbb{R}^d} \mathbb{P}_x\left(\left|X_{s}^{a,b} - X_0^{a,b} \right| \ge r/2\right) \\
    &\stackrel{c_1e^{c_2R_0^2}}{\lesssim}&  \sup_{s\le t}\sup_{x\in \mathbb{R}^d} \int_{|x-y|\ge r/2} \left(s^{-d/2}\exp\left(-\frac{c_3|x-y|^2}{s}\right) + s^{-d/2}\wedge \frac{a^\alpha s}{|x-y|^{d+\alpha}} \right) dy\\
    &\stackrel{\omega_d}{\lesssim}& \sup_{s\le t}\int_{\frac{r}{2\sqrt{s}}}^\infty \left( e^{-c_3\rho^2} + 1 \wedge \frac{M^\alpha s^{1-\alpha/2}}{\rho^{d+\alpha}} \right)\rho^{d-1} d\rho \\
    &\le& \int_{\frac{r}{2\sqrt{t}}}^\infty \left( e^{-c_3\rho^2} + 1 \wedge \frac{M^\alpha R_0^{2-\alpha}}{\rho^{d+\alpha}} \right)\rho^{d-1} d\rho.
  \end{eqnarray*}
  Setting $t =\kappa r^2$ in the last display, where $\kappa \in (0,1)$ is undetermined, we have
  \begin{align*}
    \mathbb{P}_x\left(\tau_{B(x,r)}^{a,b} \le \kappa r^2\right) \le& 2c_1e^{c_2R_0^2}\omega_d \int_{\frac{1}{2\sqrt{\kappa}}}^\infty \left( e^{-c_3\rho^2} + 1 \wedge \frac{M^\alpha R_0^{2-\alpha}}{\rho^{d+\alpha}} \right)\rho^{d-1} d\rho,
  \end{align*}
  which goes to $0$ as $\kappa \rightarrow 0$. Thus we can choose $\kappa < 1$
  so that \eqref{e:5.3} holds.
\end{proof}

 \begin{lem}\label{thm_entrytime}
  For each $M > 0$ and $R_0 > 0$, there is a constant $c_1= c_1(d,\alpha,M,R_0,b)$ depending on $b$ only via the rate at which $M_b(r)$ goes to zero, such that for all $r \in (0,R_0]$ and $x,y \in \mathbb{R}^d$ with $|x-y| \ge 2r$,
  \begin{equation*}
    \mathbb{P}_x\left(\sigma_{B(y,r)}^{a,b} < \kappa r^2 \right) \ge c_1r^{d+2} \frac{a^\alpha}{|x-y|^{d+\alpha}}.
  \end{equation*}
\end{lem}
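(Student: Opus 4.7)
\textbf{Proof plan for Lemma \ref{thm_entrytime}.} The idea is to estimate the probability that $X^{a,b}$ reaches $B(y,r)$ by making a single jump from $B(x,r)$ directly into $B(y,r)$ before time $\kappa r^2$, and to bound this via the Lévy system of $X^{a,b}$ identified in Theorem \ref{thm_levysystem}.

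Set $T := \tau_{B(x,r)}^{a,b}\wedge \kappa r^2$. Since $|x-y|\ge 2r$, the balls $B(x,r)$ and $B(y,r)$ are disjoint, so any $s\le T$ for which $X^{a,b}_{s-}\in B(x,r)$ and $X^{a,b}_s\in B(y,r)$ must be the moment at which $X^{a,b}$ exits $B(x,r)$; in particular there is at most one such $s$ and on the event that one exists we have $\sigma^{a,b}_{B(y,r)}\le T\le \kappa r^2$. Consequently
\begin{equation*}
  \sum_{s\le T}\mathbf{1}_{B(x,r)}(X^{a,b}_{s-})\,\mathbf{1}_{B(y,r)}(X^{a,b}_s)\ \le\ \mathbf{1}_{\{\sigma^{a,b}_{B(y,r)}\le \kappa r^2\}} .
\end{equation*}
Applying Theorem \ref{thm_levysystem} with the stopping time $S=T$ and the function $f(s,u,v)=\mathbf{1}_{B(x,r)}(u)\mathbf{1}_{B(y,r)}(v)$ yields
\begin{equation*}
  \mathbb{P}_x\!\left(\sigma^{a,b}_{B(y,r)}\le \kappa r^2\right)
   \ \ge\ \mathbb{E}_x\!\left[\int_0^T \mathbf{1}_{B(x,r)}(X^{a,b}_s)\int_{B(y,r)} J^a(X^{a,b}_s,z)\,dz\,ds\right] .
\end{equation*}

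For $s<\tau_{B(x,r)}^{a,b}$ one has $X^{a,b}_s\in B(x,r)$, and for every such $X^{a,b}_s$ and every $z\in B(y,r)$ the triangle inequality together with $|x-y|\ge 2r$ gives $|X^{a,b}_s-z|\le |x-y|+2r\le 2|x-y|$. Thus by \eqref{equ_Levydensity},
\begin{equation*}
  J^a(X^{a,b}_s,z)\ \ge\ \frac{\mathcal{A}(d,-\alpha)\,a^\alpha}{(2|x-y|)^{d+\alpha}}\ =\ \frac{c_0\,a^\alpha}{|x-y|^{d+\alpha}}
\end{equation*}
for a constant $c_0=c_0(d,\alpha)>0$. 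Integrating $z$ over $B(y,r)$ contributes a factor $\omega_d r^d/d$, so
\begin{equation*}
  \mathbb{P}_x\!\left(\sigma^{a,b}_{B(y,r)}\le \kappa r^2\right)\ \ge\ c_1 r^d\,\frac{a^\alpha}{|x-y|^{d+\alpha}}\,\mathbb{E}_x[T] .
\end{equation*}

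Finally, Lemma \ref{thm_exittime} applied with the same constant $\kappa=\kappa(d,\alpha,M,R_0,b)$ gives $\mathbb{P}_x(\tau^{a,b}_{B(x,r)}>\kappa r^2)\ge 1/2$, whence $\mathbb{E}_x[T]\ge (\kappa r^2)/2$. Combining the two inequalities produces the desired lower bound with $c_1=c_1(d,\alpha,M,R_0,b)$ depending on $b$ only through the rate at which $M_b(r)\to 0$ (this dependence enters solely through $\kappa$). The only subtlety is the observation that on $\{s\le T\}$ any jump into $B(y,r)$ from inside $B(x,r)$ can occur at most once and forces $\sigma^{a,b}_{B(y,r)}\le\kappa r^2$; with this in hand the argument is a clean Lévy-system/occupation-time estimate and should present no further obstacle.
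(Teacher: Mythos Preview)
Your argument is correct and follows essentially the same route as the paper's own proof: both use the L\'evy system of $X^{a,b}$ with the stopping time $\tau^{a,b}_{B(x,r)}$ truncated at a multiple of $r^2$, bound $J^a(X^{a,b}_s,z)$ via $|X^{a,b}_s-z|\le 2|x-y|$, and invoke Lemma~\ref{thm_exittime} to lower bound the expected occupation time. The only cosmetic difference is that the paper truncates at $\kappa r^2/2$ rather than $\kappa r^2$, which directly yields the strict inequality $\sigma^{a,b}_{B(y,r)}<\kappa r^2$ in the statement; your version gives $\le\kappa r^2$, but this is immediately repaired by the same substitution.
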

\begin{proof}
  By Lemma \ref{thm_exittime},
  \begin{equation*}
    \mathbb{E}_x\left[\frac{\kappa r^2}{2}\wedge \tau_{B(x,r)}^{a,b}\right] \ge \frac{\kappa r^2}{2} \mathbb{P}_x\left(\tau_{B(x,r)}^{a,b} \ge \frac{\kappa r^2}{2}\right) \ge \frac{\kappa r^2}{4}.
  \end{equation*}
  By Theorem \ref{thm_levysystem}, we have
  \begin{align*}
    \mathbb{P}_x\left(\sigma_{B(y,r)}^{a,b} < \kappa r^2 \right) &\ge \mathbb{P}_x\left(X_{\frac{\kappa r^2}{2} \wedge \tau_{B(x,r)}^{a,b}}^{a,b} \in B(y,r)\right)\\
    &= \mathbb{E}_x\left(\int_0^{\frac{\kappa r^2}{2} \wedge \tau_{B(x,r)}^{a,b}}\int_{B(y,r)} J^{a}(X_s^{a,b},u) du ds\right)\\
    &\ge 2^{-(d+\alpha)}\mathbb{E}_x\left[\frac{\kappa r^2}{2} \wedge \tau_{B(x,r)}^{a,b}\right] \int_{B(y,r)} \frac{a^\alpha}{|x-y|^{d+\alpha}} du\\
    &\ge \frac{\Omega_d}{4\cdot2^{d+\alpha}} \kappa r^{d+2} \frac{a^\alpha}{|x-y|^{d+\alpha}},
  \end{align*}
  where $\Omega_d$ is the volume of unit ball in $\mathbb{R}^d$, and in the second to the last inequality, we have used the fact that for $u \in B(y,r)$, $|u - X_s^{a,b}| \le 2r+|x-y| \le 2|x-y|$.
\end{proof}

\const{\CTpabSmallLc}
\begin{lem}\label{thm_pablowpoly}
  For every $M > 0$, there is a constant $C_{\CTpabSmallLc} = C_{\CTpabSmallLc}(d,\alpha,M,b)$ depending on $b$ only via the rate at which $M_b(r)$ goes to zero, such that for all $t\in (0,t_*], a\in (0,M]$ and $x,y\in \mathbb{R}^d$
  \begin{equation*}
    p^{a,b}(t,x,y) \ge C_{\CTpabSmallLc} \left(t^{-d/2}\wedge\frac{a^\alpha t}{|x-y|^{d+\alpha}}\right).
  \end{equation*}
\end{lem}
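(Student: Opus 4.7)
The plan is to split the proof into two regimes according to whether $|x-y|^2$ is at most or exceeds a multiple $C_0 t$, with $C_0 = 2/\kappa$ where $\kappa$ is the constant from Lemma \ref{thm_exittime} applied with some fixed $R_0 \ge \sqrt{t_*/(2\kappa)}$. In the near-diagonal regime $|x-y|^2 \le C_0 t$ one has $t^{-d/2} \wedge (a^\alpha t/|x-y|^{d+\alpha}) \le t^{-d/2}$, so it suffices to show $p^{a,b}(t,x,y) \ge c\,t^{-d/2}$; in the far-field regime $|x-y|^2 > C_0 t$ the minimum equals the polynomial term, and one shows $p^{a,b}(t,x,y) \gtrsim a^\alpha t/|x-y|^{d+\alpha}$ via the L\'evy system of $X^{a,b}$ identified in Theorem \ref{thm_levysystem}.

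For the near-diagonal extension, I first bootstrap Lemma \ref{thm_pabbounds}, which gives the lower bound only on $\{|x-y|^2 < t\}$, to the larger set $\{|x-y|^2 \le C_0 t\}$ by a standard chaining argument. Given $|x-y|^2 \le C_0 t$, choose an integer $N = N(C_0)$ large enough that, upon partitioning $\overline{xy}$ into $N$ equal segments with interpolating points $x = x_0, x_1, \dots, x_N = y$, the triangle inequality $|z_i - z_{i+1}|^2 < t/N$ holds whenever $z_i \in B(x_i, c_1\sqrt{t/N})$ for a suitably small constant $c_1$. Iterating the Chapman--Kolmogorov identity \eqref{e:1.8} with time increment $t/N$, restricting each intermediate integral to $B(x_i, c_1\sqrt{t/N})$, and invoking Lemma \ref{thm_pabbounds} on each of the $N$ factors, one obtains $p^{a,b}(t,x,y) \ge c_{C_0}\, t^{-d/2}$ with $c_{C_0}$ depending only on $d, \alpha, M, b, C_0$.

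For the far-field case $|x-y|^2 > C_0 t$, set $r := \sqrt{t/(2\kappa)}$, so that $\kappa r^2 = t/2$ and $|x-y| > 2r$. By \eqref{e:1.8},
\[ p^{a,b}(t,x,y) \ge \int_{B(y,2r)} p^{a,b}(t/2,x,z)\, p^{a,b}(t/2,z,y)\, dz. \]
For $z \in B(y,2r)$ the ratio $|z-y|^2/(t/2) \le 4/\kappa = 2 C_0$, so the extended near-diagonal bound yields $p^{a,b}(t/2,z,y) \ge c\,(t/2)^{-d/2}$. It remains to estimate $\mathbb{P}_x(X_{t/2}^{a,b} \in B(y,2r))$. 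Set $\sigma := \sigma_{B(y,r)}^{a,b}$. Lemma \ref{thm_entrytime} gives $\mathbb{P}_x(\sigma < \kappa r^2) \ge c\, r^{d+2} a^\alpha/|x-y|^{d+\alpha}$; on $\{\sigma < \kappa r^2 = t/2\}$ we have $t/2 - \sigma \in (0, \kappa r^2]$, so the strong Markov property combined with Lemma \ref{thm_exittime} applied to $B(X_\sigma, r)$ shows $\mathbb{P}_{X_\sigma}(\tau_{B(X_\sigma, r)}^{a,b} > t/2 - \sigma) \ge 1/2$, forcing $X_{t/2}^{a,b} \in B(X_\sigma, r) \subset B(y, 2r)$ with conditional probability at least $1/2$. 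Combining these pieces,
\[ \mathbb{P}_x(X_{t/2}^{a,b} \in B(y,2r)) \ge \tfrac{1}{2} c\, r^{d+2} \frac{a^\alpha}{|x-y|^{d+\alpha}} \asymp t^{(d+2)/2} \frac{a^\alpha}{|x-y|^{d+\alpha}}, \]
and then with the near-diagonal bound for $p^{a,b}(t/2,z,y)$ we obtain $p^{a,b}(t,x,y) \gtrsim a^\alpha t/|x-y|^{d+\alpha}$.

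The main obstacle is that $\kappa < 1$ in Lemma \ref{thm_exittime}, which prevents directly combining a single Chapman--Kolmogorov split with Lemma \ref{thm_pabbounds}: the process enters $B(y,r)$ by time $\kappa r^2 < t/2$, yet then must stay near this entry point for a duration of order $t/2$, which forces the containment ball to have radius $\sim \sqrt{t/\kappa} > \sqrt{t/2}$. This mismatch is precisely what necessitates the chaining-based near-diagonal extension, after which the L\'evy-system argument closes smoothly.
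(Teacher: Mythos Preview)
Your argument is correct, but the paper sidesteps the chaining step entirely by an asymmetric time split. Instead of cutting at $t/2$ with $r=\sqrt{t/(2\kappa)}$, the paper takes $r=\sqrt{t}/4$ and splits the Chapman--Kolmogorov identity at the small time $\kappa t/16$. The hitting--staying argument then yields $\mathbb{P}_x\bigl(X^{a,b}_{\kappa t/16}\in B(y,\sqrt{t}/2)\bigr)\gtrsim t^{(d+2)/2}a^\alpha/|x-y|^{d+\alpha}$, while for the remaining factor $p^{a,b}\bigl((1-\kappa/16)t,z,y\bigr)$ one has $|z-y|^2<t/4<(1-\kappa/16)t$, so Lemma~\ref{thm_pabbounds} applies \emph{directly} without any enlargement of the near-diagonal region. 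Thus the ``main obstacle'' you identify is not intrinsic: it is an artifact of symmetrizing the split. Your route works, but the paper's choice of parameters is cleaner and avoids the auxiliary chaining lemma altogether.
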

\begin{proof}
  By (\ref{equ_pablow}), for $t \in (0,t_*]$ and $x,y\in \mathbb{R}^d$, with $|x-y|^2 \le t$
  \begin{equation*}
    p^{a,b}(t,x,y) \ge C_{\CTpnSum}^{-1}t^{-d/2}\ge C_{\CTpnSum}^{-1}\left(t^{-d/2}\wedge\frac{a^\alpha t}{|x-y|^{d+\alpha}}\right).
  \end{equation*}
  It remains to consider the case $|x-y|^2 > t$. For any $t \in (0,t_*]$, by the strong Markov property, Lemma \ref{thm_exittime} and Lemma \ref{thm_entrytime} with $R_0 = \sqrt{t_*}$ and $r= \sqrt{t}/4$, we have $|x-y| > \sqrt{t} > 2r$ and
  \begin{align*}
    &~\mathbb{P}_x\left(X_{\kappa t/16}^{a,b} \in B(y, \sqrt{t}/2)\right)\\
    \ge&~ \mathbb{P}_x\left(X^{a,b} \text{ hits } B(y,\sqrt{t}/2) \text{ before time } \kappa t/16 \text{ and stays there for at least } \kappa t/16 \text{ units of time}\right)\\
    \ge&~ \mathbb{P}_x\left(\sigma^{a,b}_{B(y,\sqrt{t}/4)} \le \kappa t/16,\tau^{a,b}_{B(y,\sqrt{t}/2)}\circ \theta_{\sigma^{a,b}_{B(y,\sqrt{t}/4)}} \ge \kappa t/16\right) \\
    \ge&~ \mathbb{P}_x\left(\sigma_{B(y,\sqrt{t}/4)}^{a,b} < \kappa t/16\right) \inf_{z \in B(y,\sqrt{t}/4)}
    \mathbb{P}_z\left(\tau_{B(y,\sqrt{t}/2)}^{a,b} \ge \kappa t/16\right)\\
    \ge&~ \mathbb{P}_x\left(\sigma_{B(y,\sqrt{t}/4)}^{a,b} < \kappa t/16\right) \inf_{z \in B(y,\sqrt{t}/4)} \mathbb{P}_z\left(\tau_{B(z,\sqrt{t}/4)}^{a,b} \ge \kappa t/16\right)\\
    \ge&~ c_1t^{(d+2)/2}\frac{a^\alpha}{|x-y|^{d+\alpha}},
  \end{align*}
  for some constant $c_1 = c_1(d,\alpha,M,b) > 0$. Combining this with Lemma \ref{thm_exittime} and Chapman-Kolmogorov equation (\ref{e:1.8}), we have
  for $t\in (0,t_*]$
  \begin{align*}
    p^{a,b}(t,x,y) =& \int_{\mathbb{R}^d} p^{a,b}(\kappa t/16,x,z)p^{a,b}((1-\kappa/16)t,z,y) dz\\
    \ge& \int_{B(y,\sqrt{t}/2)} p^{a,b}(\kappa t/16,x,z)p^{a,b}((1-\kappa/16)t,z,y) dz\\
    \ge& \inf_{z \in B(y,\sqrt{t}/2)} p^{a,b}((1-\kappa/16)t,z,y)\mathbb{P}_x\left(X_{\kappa t/16}^{a,b}\in B(y,\sqrt{t}/2)\right)\\
    \ge& c_2t^{-d/2} t^{(d+2)/2} \frac{a^\alpha}{|x-y|^{d+\alpha}}\\
    =& c_2 \frac{a^\alpha t}{|x-y|^{d+\alpha}}\ge c_2\left(t^{-d/2}\wedge\frac{a^\alpha t}{|x-y|^{d+\alpha}}\right),
  \end{align*}
  where $c_2 = c_2(d,\alpha,M,b)$ is a positive constant and in the third to the last  inequality, we have used the fact that $\kappa < 1$ and for $z\in B(y,\sqrt{t}/2)$, $|z-y|^2 < t/ 4 < (1-{\kappa/ 16}) t$.
\end{proof}

\const{\CTpabSmallLexpI}
\const{\CTpabSmallLexpII}
\begin{lem}\label{thm_pablowexp}
  Suppose $M > 0$. For all $a \in (0,M]$, $t \in (0,t_*]$ and $x,y\in \mathbb{R}^d$, there are constants $C_i=C_i(d,\alpha,M) > 0,i=\CTpabSmallLexpI,\CTpabSmallLexpII$ such that
  \begin{equation*}
    p^{a,b}(t,x,y) \ge C_{\CTpabSmallLexpI}t^{-d/2} \exp\left(-\frac{C_{\CTpabSmallLexpII}|x-y|^2}{t}\right).
  \end{equation*}
\end{lem}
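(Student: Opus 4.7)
The plan is to use a standard chaining argument based on the diagonal lower bound already established in Lemma \ref{thm_pabbounds}, namely that $p^{a,b}(s,u,v)\ge C_{\CTpnSum}^{-1}s^{-d/2}$ whenever $|u-v|^2<s\le t_*$, combined with the Chapman--Kolmogorov equation \eqref{e:1.8}.

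First, I would dispose of the near-diagonal regime $|x-y|^2\le t$. Here \eqref{equ_pablow} gives $p^{a,b}(t,x,y)\ge C_{\CTpnSum}^{-1}t^{-d/2}$, and since $|x-y|^2/t\le 1$ one has $e^{-C_{\CTpabSmallLexpII}|x-y|^2/t}\ge e^{-C_{\CTpabSmallLexpII}}$, so the desired bound holds for any choice of $C_{\CTpabSmallLexpII}$ provided $C_{\CTpabSmallLexpI}\le C_{\CTpnSum}^{-1}e^{-C_{\CTpabSmallLexpII}}$.

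For the far-diagonal regime $|x-y|^2 > t$, I would set $n=\lceil 4|x-y|^2/t\rceil$, which guarantees $|x-y|/n\le \tfrac12\sqrt{t/n}$. Place $n-1$ equally spaced intermediate points $w_i=x+(i/n)(y-x)$ for $i=1,\dots,n-1$ on the segment joining $x$ and $y$, let $w_0=x$, $w_n=y$, and set $r=\tfrac14\sqrt{t/n}$. For any $z_i\in B(w_i,r)$ and $z_{i+1}\in B(w_{i+1},r)$ the triangle inequality yields
\[
|z_i-z_{i+1}|\le |x-y|/n+2r\le \sqrt{t/n},
\]
so $|z_i-z_{i+1}|^2<t/n\le t_*$. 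Iterating \eqref{e:1.8} and restricting to the product of balls $B(w_i,r)$ for $1\le i\le n-1$ gives
\[
p^{a,b}(t,x,y)\ge \int_{B(w_1,r)}\!\!\cdots\!\!\int_{B(w_{n-1},r)}\prod_{i=0}^{n-1}p^{a,b}(t/n,z_i,z_{i+1})\,dz_1\cdots dz_{n-1},
\]
and applying \eqref{equ_pablow} to each factor gives each one at least $C_{\CTpnSum}^{-1}(t/n)^{-d/2}$.

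Carrying out the arithmetic, each of the $n-1$ integrals contributes a factor $\Omega_d r^d=\Omega_d 4^{-d}(t/n)^{d/2}$, while the product of the $n$ heat kernels contributes $C_{\CTpnSum}^{-n}(t/n)^{-nd/2}$. Multiplying, the $(t/n)^{d(n-1)/2}$ and $(t/n)^{-nd/2}$ combine to $(t/n)^{-d/2}$, so
\[
p^{a,b}(t,x,y)\ge c_1 c_2^{\,n-1}(t/n)^{-d/2},
\]
where $c_1=C_{\CTpnSum}^{-1}$ and $c_2=C_{\CTpnSum}^{-1}\Omega_d 4^{-d}$ depend only on $d,\alpha,M$. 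Since $n\le 4|x-y|^2/t+1\le 5|x-y|^2/t$ in this regime and $(t/n)^{-d/2}=n^{d/2}t^{-d/2}\ge t^{-d/2}$, we obtain $p^{a,b}(t,x,y)\ge c_1 t^{-d/2}\exp\bigl(-5\log(1/c_2)\,|x-y|^2/t\bigr)$ provided $c_2<1$ (if not, shrink it). Choosing $C_{\CTpabSmallLexpII}=5\log(1/c_2)$ and $C_{\CTpabSmallLexpI}$ as the minimum of $c_1$ and $C_{\CTpnSum}^{-1}e^{-C_{\CTpabSmallLexpII}}$ produces a single bound valid in both regimes.

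The argument contains no serious obstacle: the bookkeeping that the resulting constants depend only on $d,\alpha,M$ (not on $b$) is automatic because $C_{\CTpnSum}$ in Lemma \ref{thm_pabbounds} has this property, while the $b$-dependence is absorbed entirely into $t_*$ which only restricts the range of $t$. The only points that require minor care are (i) the hypothesis $t/n\le t_*$ used to invoke \eqref{equ_pablow} on each short step, which holds because $n\ge 1$ and $t\le t_*$, and (ii) the strict inequality $|z_i-z_{i+1}|^2<t/n$, which is why I chose the safety factor $1/4$ in both the choice of $n$ and the choice of $r$.
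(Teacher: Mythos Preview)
Your proposal is correct and follows essentially the same chaining argument as the paper: both use the diagonal lower bound \eqref{equ_pablow}, iterate Chapman--Kolmogorov over equally spaced points on the segment $[x,y]$, and integrate over small balls around those points to produce the Gaussian exponent. The only differences are cosmetic choices of safety factors (you use $4$ and $r=\tfrac14\sqrt{t/n}$ where the paper uses $9$ and radius $\tfrac{1}{3}\sqrt{t/k}$), and your observation that $C_{\CTpnSum}$ depends only on $d,\alpha,M$ is exactly what makes the constants independent of $b$.
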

\begin{proof}
  By (\ref{equ_pablow}), for all $t \in (0,t_*]$ and $x,y\in \mathbb{R}^d$ with $|x-y|^2 < t$, we have
  \begin{equation}\label{equ_pablowcase1}
    p^{a,b}(t,x,y) \ge C_{\CTpnSum}^{-1}t^{-d/2}.
  \end{equation}
  Next, we consider the case $|x-y|^2 > t$. We fix $x,y \in \mathbb{R}^d$ with $|x-y|^2 \ge t$. Let $k$ be the smallest integer such that $9|x-y|^2/t < k$. Set $\xi_j = x+\frac{j-1}{k}(y-x), 1\le j \le k-1$ and $A = \prod_{j=1}^{k-1}B(\xi_j, \frac{\sqrt{t}}{3\sqrt{k}})$. For any $(x_1,\cdots,x_{k-1}) \in A$, we have
  $|x - x_1| < \frac{\sqrt{t}}{3\sqrt{k}} < \frac{\sqrt{t}}{\sqrt{k}}$,
$$ \max_{1<j\le k-1}|x_j - x_{j-1}| = \max_{1<j\le k-1}\left| x_j-\xi_j + \xi_{j-1}-x_{j-1} + \frac{y-x}{k}\right|  < \frac{\sqrt{t}}{3\sqrt{k}} + \frac{\sqrt{t}}{3\sqrt{k}} + \frac{\sqrt{t}}{3\sqrt{k}} = \frac{\sqrt{t}}{\sqrt{k}}
$$
and $|x_{k-1}-y| = |x_{k-1} - \xi_{k-1} + \xi_{k-1} - y| < \frac{\sqrt{t}}{\sqrt{k}}$.
  Hence by Lemma \ref{thm_pabpositive}, Chapman-Kolmogorov equation (\ref{e:1.8}) and (\ref{equ_pablowcase1}),
  \begin{align*}
    p^{a,b}(t,x,y) &= \int_{\mathbb{R}^{d(k-1)}} p^{a,b}(\frac{t}{k},x,x_1) \cdots p^{a,b}(\frac{t}{k},x_{k-1},y)
    dx_1dx_2\cdots dx_{k-1}\\
    &\ge \int_{A} p^{a,b}(\frac{t}{k},x,x_1) \cdots p^{a,b}(\frac{t}{k},x_{k-1},y) dx_1dx_2\cdots dx_{k-1}\\
    &\ge C_{\CTpnSum}^{-k}\left(\frac{t}{k}\right)^{-dk/2} \Omega_d^{k-1}\left(\frac{\sqrt{t}}{3\sqrt{k}}\right)^{d(k-1)}\\
    &= t^{-d/2}\frac{k^{d/2}}{C_{\CTpnSum}} \left(\frac{k^{d/2}}{C_{\CTpnSum}}\frac{\Omega_d}{3^dk^{d/2}}\right)^{k-1}\\
    &= \frac{k^{d/2}}{C_{\CTpnSum}} t^{-d/2} \left(\frac{\Omega_d}{C_{\CTpnSum}3^d}\right)^{k-1}\\
    &\ge \frac{3^d}{C_{\CTpnSum}} t^{-d/2} \exp\left(-\ln\frac{C_{\CTpnSum}3^d}{\Omega_d} \frac{9|x-y|^2}{t}\right),
  \end{align*}
  where $\Omega_d$ is the volume of unit ball in $\mathbb{R}^d$. This together with (\ref{equ_pablowcase1}) proves the lemma  with $C_{\CTpabSmallLexpI} := \frac{3^d}{C_{\CTpnSum}}$ and $C_{\CTpabSmallLexpII} := 9\ln\frac{C_{\CTpnSum}3^d}{\Omega_d}$.
\end{proof}

\renewcommand{\proofname}{\bf{Proof of Theorem \ref{T:1.3}}}
\begin{proof}
  The upper bound of $p^{a,b}(t,x,y)$ is shown
     by Theorem \ref{thm_pababsup} and Lemma \ref{thm_pabpositive}.
   We need only to show the lower bound. Without loss of generality, we assume $T > t_*$. If $t \in (0,t_*]$, by Lemma \ref{thm_pablowpoly} and Lemma \ref{thm_pablowexp}, there is a constant $c_1=c_1(d,\alpha,M,b) > 0$ such that for $x,y \in \mathbb{R}^d$
  \begin{equation}\label{equ_pablower1}
    \begin{split}
      p^{a,b}(t,x,y) \ge& \frac{1}{2} \left(C_{\CTpabSmallLexpI}t^{-d/2}\exp\left(-\frac{C_{\CTpabSmallLexpII}|x-y|^2}{t}\right) + C_{\CTpabSmallLc}\left(t^{-d/2}\wedge \frac{a^\alpha t}{|x-y|^{d+\alpha}}\right)\right)\\
      \ge&c_1 \left(t^{-d/2}\exp\left(-\frac{C_{\CTpabSmallLexpII}|x-y|^2}{t}\right) +t^{-d/2}\wedge \frac{a^\alpha t}{|x-y|^{d+\alpha}}\right)\\
      =& c_1q_{d,C_{\CTpabSmallLexpII}}^a(t,x,y).
    \end{split}
  \end{equation}
  If $t \ge t_*$, we let $k$ be the smallest integer such that $t_*k \ge t >(k-1)t_*$. Note that by Theorem \ref{thm_pabound}, for $t\in (0,T]$ and $x,y\in \mathbb{R}^d$,
  \begin{equation*}
    q_{d,C_{\CTpabSmallLexpII}}^a(t,x,y) \ge \left(\frac{C_{\CTpaUexp}}{C_{\CTpabSmallLexpII}}\right)^{d/2} q_{d,C_{\CTpaUexp}}^a(\frac{C_{\CTpaUexp}t}{C_{\CTpabSmallLexpII}},x,y) \ge \left(\frac{C_{\CTpaUexp}}{C_{\CTpabSmallLexpII}}\right)^{d/2}C_{\CTpaUc}^{-1}p^a(\frac{C_{\CTpaUexp}t}{C_{\CTpabSmallLexpII}},x,y).
  \end{equation*}
  Using this,  (\ref{e:1.8}), (\ref{equ_pablower1}) and Theorem \ref{thm_pabound}, we have
  \begin{align*}
    p^{a,b}(t,x,y) &\ge c_1^{-k} \int_{\mathbb{R}^{d(k-1)}} q_{d,C_{\CTpabSmallLexpII}}^a(\frac{t}{k},x,x_1) \cdots q_{d,C_{\CTpabSmallLexpII}}^a(\frac{t}{k},x_{k-1},y) dx_1\cdots dx_{k-1}\\
    &\ge c_1^{-k} \left(\frac{C_{\CTpaUexp}}{C_{\CTpabSmallLexpII}}\right)^{dk/2}C_{\CTpaUc}^{-k}\int_{\mathbb{R}^{d(k-1)}} p^a(\frac{C_{\CTpaUexp}}{C_{\CTpabSmallLexpII}}\frac{t}{k},x,x_1) \cdots p^a(\frac{C_{\CTpaUexp}}{C_{\CTpabSmallLexpII}}\frac{t}{k},x_{k-1},y) dx_1\cdots dx_{k-1}\\
    &= c_1^{-k}\left(\frac{C_{\CTpaUexp}}{C_{\CTpabSmallLexpII}}\right)^{dk/2}C_{\CTpaUc}^{-k} p^a(\frac{C_{\CTpaUexp}t}{C_{\CTpabSmallLexpII}},x,y)\\
    &\ge \frac{C_{\CTpaUexp}C_{\CTpaLc}}{c_1C_{7}}\left(\frac{C_{\CTpaUexp}}{c_1C_{7}}\right)^{d(k-1)/2} q_{d,C_{\CTpaLexp}}^a(\frac{C_{\CTpaUexp}t}{C_{\CTpabSmallLexpII}},x,y)\\
    &\ge \frac{C_{\CTpaUexp}C_{\CTpaLc}c_2}{c_1C_{7}}\left(\frac{C_{\CTpaUexp}}{c_1C_{7}}\right)^{dt/(2t_*)} q_{d,C_{\CTpaLexp}C_{\CTpabSmallLexpII}/C_{\CTpaUexp}}^a(t,x,y)\\
    &\ge \frac{C_{\CTpaUexp}C_{\CTpaLc}c_2}{c_1C_{7}}\left(\frac{C_{\CTpaUexp}}{c_1C_{7}}\right)^{dT/(2t_*)} q_{d,C_{\CTpaLexp}C_{\CTpabSmallLexpII}/C_{\CTpaUexp}}^a(t,x,y).
  \end{align*}
  where $c_2=c_2(d,\alpha,M,b)$ is a positive constant. This completes the proof.
\end{proof}
\renewcommand{\proofname}{Proof}

\section{Martingale problem and L\'evy process with drift}

Following the approach in \cite{ChenWang.2013a}, we can show that the martingale problem for $(\LL^{a,b}, C^\infty_c(\R^d))$ is well-posed, and there is a unique weak solution to SDE \eqref{e:1.1}.

\const{\CTulambda}
For $a > 0$ and $\lambda > 0$, define
\begin{equation*}
  u^a_\lambda(x) = \int_0^\infty e^{-\lambda t} p^a(t,x)dt,\quad x\in \mathbb{R}^d.
\end{equation*}

\begin{lem}\label{thm_uaUp}
  There is a constants $C_{\CTulambda} = C_{\CTulambda}(d)$ such that for all $a > 0$, $\lambda \ge 1$ and $x\in \mathbb{R}^d$, we have
  \begin{equation}\label{eq_uaUp}
    u^a_\lambda(x) \le C_{\CTulambda} (1\vee a^\alpha)\left\{
      \begin{aligned}
        &\frac{1}{|x|^{d-1}}\wedge \frac{\lambda^{-\frac{\alpha+1}{2}}}{|x|^{d+\alpha}},\quad d = 2,\\
        &\frac{1}{|x|^{d-2}}\wedge \frac{\lambda^{-\frac{\alpha+2}{2}}}{|x|^{d+\alpha}},\quad d > 2,
      \end{aligned}\right.
  \end{equation}
  and
  \begin{equation}\label{eq_graduaUp}
    \left|\nabla u^a_\lambda(x)\right| \le C_{\CTulambda} (1\vee a^\alpha) \left(\frac{1}{|x|^{d-1}}\wedge \frac{\lambda^{-\frac{\alpha+2}{2}}}{|x|^{d+1+\alpha}}\right).
  \end{equation}
\end{lem}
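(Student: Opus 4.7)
The plan is to decompose $p^a(t,x)$ via Theorem \ref{thm_pabound} into a Gaussian and a stable-type piece, integrate each against $e^{-\lambda t}$, and obtain two complementary estimates for $u^a_\lambda(x)$: a ``near $x=0$'' bound (using $e^{-\lambda t}\le 1$) and a polynomial-tail bound (using $e^{-\beta|x|^2/t}\le C_k(t/|x|^2)^k$). For the gradient estimate \eqref{eq_graduaUp} I would apply exactly the same scheme to the pointwise bound $|\nabla_x p^a(t,x)|\le C_{\CTnablapaU}\,q^a_{d+1,3C_{\CTpaUexp}/4}(t,x)$ of Theorem \ref{thm_Gradpabound}, which has the same structural form but with $d$ replaced by $d+1$.

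Concretely, from $p^a(t,x)\le C\bigl(g_{d,\beta}(t,x)+(t^{-d/2}\wedge\tfrac{a^\alpha t}{|x|^{d+\alpha}})\bigr)$ with absolute constants $C,\beta>0$, I split $u^a_\lambda=U_1+U_2$. For $U_1$, the choice $k=(d+\alpha)/2$ together with the elementary gamma-integral $\int_0^\infty e^{-\lambda t}t^{k-d/2}\,dt=\Gamma(k-d/2+1)\lambda^{d/2-1-k}$ yields the far-field bound $U_1(x)\lesssim \lambda^{-(\alpha+2)/2}|x|^{-(d+\alpha)}$, which is at least as strong as either claimed far-field bound. The near-field bound comes from $e^{-\lambda t}\le 1$: the substitution $s=\beta|x|^2/t$ gives $\int_0^\infty t^{-d/2}e^{-\beta|x|^2/t}\,dt=c|x|^{2-d}$ when $d>2$; for $d=2$ the integral diverges logarithmically at $t=\infty$, so the hypothesis $\lambda\ge 1$ is used to truncate (split at $t=1$ and, for $|x|<1$, at $t=|x|^2$), producing a $\log(1/|x|)$ estimate that is absorbed into the coarser $C/|x|$. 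For $U_2$, the far-field bound $U_2(x)\le \tfrac{a^\alpha}{|x|^{d+\alpha}}\int_0^\infty e^{-\lambda t}t\,dt=\tfrac{a^\alpha}{\lambda^2|x|^{d+\alpha}}$ is of the required form since $\lambda^{-2}\le \lambda^{-(\alpha+2)/2}$ when $\lambda\ge 1$ and $\alpha<2$. The near-field bound exploits the minimum structure itself: splitting at $t_0:=(|x|^{d+\alpha}/a^\alpha)^{2/(d+2)}$ yields $U_2(x)\lesssim a^{\alpha(d-2)/(d+2)}|x|^{-(d-2)(d+\alpha)/(d+2)}$, whose $x$-singularity is milder than $|x|^{2-d}$ (since $(d+\alpha)/(d+2)<1$) and whose $a$-prefactor is bounded by $1\vee a^\alpha$.

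For the gradient bound, I would first justify differentiation under the integral away from the origin using the local boundedness of $|\nabla_x p^a(t,x)|$ from Theorem \ref{thm_Gradpabound}, together with the $a^\alpha t/|x|^{d+1+\alpha}$ bound handling integrability as $t\to 0^+$ and the $e^{-\lambda t}$ factor handling $t\to\infty$. Then the argument above repeats verbatim with $q^a_{d+1,3C_{\CTpaUexp}/4}$ in place of $q^a_{d,C_{\CTpaUexp}}$ and $k=(d+1+\alpha)/2$: this produces the far-field bound $\lambda^{-(\alpha+2)/2}|x|^{-(d+1+\alpha)}$ and, via $\int_0^\infty t^{-(d+1)/2}e^{-\beta'|x|^2/t}\,dt=c|x|^{1-d}$ (convergent at both ends for all $d\ge 2$, so no special-casing of $d=2$ is required), the near-field bound $|x|^{1-d}$. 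The only real obstacle is organizational: one must verify in each region of $(x,\lambda)$-space that the sum of the individual bounds is dominated by the claimed minimum, and that all $a$-dependence is absorbed into $1\vee a^\alpha$; the borderline role of $d=2$ in \eqref{eq_uaUp} reflects nothing more than the logarithmic failure of integrability of $t^{-1}$ at infinity, which forces the use of $\lambda\ge 1$ in that one estimate.
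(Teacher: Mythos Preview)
Your strategy is the same as the paper's---a near-field/far-field split of $\int_0^\infty e^{-\lambda t}p^a(t,x)\,dt$ using the Gaussian and polynomial-tail pieces of $p^a$---but two points need attention.

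First, the bounds you quote from Theorems~\ref{thm_pabound} and~\ref{thm_Gradpabound} are stated only on $(0,T]$ with $T$-dependent constants, whereas $u^a_\lambda$ integrates over all $t>0$. For $p^a$ this is harmless: from the global estimate \eqref{equ_paSharpBnds} and the elementary inequality $A\wedge(B+C)\le B+(A\wedge C)$ one does get $p^a(t,x)\le C\,q^a_{d,\beta}(t,x)$ for all $t>0$ with absolute constants, so your $U_1+U_2$ decomposition is valid. For $|\nabla p^a|$ you should rather invoke the identity \eqref{equ_ghk}, $\nabla_x p^a(t,x)=-2\pi x\,p^a_{(d+2)}(t,\tilde x)$, which is what the paper does: then $|\nabla u^a_\lambda(x)|=2\pi|x|\,u^{a,(d+2)}_\lambda(\tilde x)$ and \eqref{eq_graduaUp} follows in one line from \eqref{eq_uaUp} in dimension $d+2\ge4$, avoiding any repetition.

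Second, your near-field bound for $U_2$ is wrong when $d=2$. After dropping $e^{-\lambda t}$ the integral $\int_{t_0}^\infty t^{-d/2}\,dt=\int_{t_0}^\infty t^{-1}\,dt$ diverges, so your formula $U_2\lesssim a^{\alpha(d-2)/(d+2)}|x|^{-(d-2)(d+\alpha)/(d+2)}$ (which would give a constant) cannot hold; indeed $U_2(x)\to\infty$ as $|x|\to0$ when $d=2$. You flagged this phenomenon for $U_1$ but not for $U_2$. The paper avoids separating $U_1$ from $U_2$ and instead splits the full near-field integral at $t=|x|^2$: for $t\le|x|^2$ it uses the polynomial tail $p^a\lesssim t|x|^{-(d+2)}+a^\alpha t|x|^{-(d+\alpha)}$, and for $t\ge|x|^2$ the on-diagonal bound $p^a\le C_1 t^{-d/2}$ together with the observation $t^{-1}\le|x|^{-1}t^{-1/2}$ on $\{t\ge|x|^2\}$, so that $\int_{|x|^2}^\infty e^{-\lambda t}t^{-1}\,dt\le|x|^{-1}\int_0^\infty e^{-t}t^{-1/2}\,dt=\sqrt\pi/|x|$. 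This one-line trick disposes of the $d=2$ case for both pieces simultaneously.
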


\begin{proof}
  Note that for each $\theta > 0$, the function $\psi(t) = t^\theta e^{-t}$ on $[0, \infty)$  is bounded by $\theta^\theta e^{-\theta}$. By (\ref{equ_paSharpBnds}), we have
  \begin{equation}\label{eq_uaUp1}
    \begin{split}
      u^a_\lambda(x) &\le C_{\CTpaBndI} \int_0^\infty e^{-\lambda t} \left(t^{-d/2}e^{-C_{\CTpaBndII}|x|^2/t}+(a^\alpha t)^{-d/\alpha}\wedge \frac{a^\alpha t}{|x|^{d+\alpha}}\right) dt\\
      &\le c_1  \int_0^\infty e^{-\lambda t}\left(\frac{t}{|x|^{d+2}} + \frac{a^\alpha t}{|x|^{d+\alpha}}\right) dt\\
      &= c_1 \lambda^{-2}\left(\frac{1}{|x|^{d+2}} + \frac{a^\alpha}{|x|^{d+\alpha}}\right)\\
      &\le c_1 (1\vee a^\alpha)\lambda^{-2}\left(\frac{1}{|x|^{d+2}} + \frac{1}{|x|^{d+\alpha}}\right).
    \end{split}
  \end{equation}
  Since $\lambda \ge 1$, if $|x|^2 \ge  1/\lambda$,
  \begin{equation}\label{eq_uaUp2}
    u^a_\lambda(x) \le 2c_1 (1\vee a^\alpha)\frac{\lambda^{-\frac{\alpha+2}{2}}}{|x|^{d+\alpha}}.
  \end{equation}
  When $|x|^2 < 1/\lambda$, similar to (\ref{eq_uaUp1}), we have
  \begin{equation}\label{eq_uaUp3}
      \int_0^{|x|^2} e^{-\lambda t} p^a(t,x)dt \le c_1 \int_0^{|x|^2} \left(\frac{t}{|x|^{d+2}} + \frac{a^\alpha t}{|x|^{d+\alpha}}\right) dt \le \frac{c_1}{2} \left(\frac{1}{|x|^{d-2}} + \frac{a^\alpha}{|x|^{d-4+\alpha}}\right) \le  \frac{c_1 (1\vee a^\alpha)}{|x|^{d-2}}
  \end{equation}
  and
  \begin{equation}\label{eq_uaUp4}
    \begin{split}
      \int_{|x|^2}^\infty e^{-\lambda t} p^a(t,x)dt &\le C_{\CTpaBndI} \int_{|x|^2}^\infty e^{-\lambda t}t^{-d/2} dt\\
      &\le C_{\CTpaBndI} \left\{
        \begin{aligned}
          \frac{1}{|x|} \int_{|x|^2}^\infty e^{-t}t^{-1/2} dt \le \frac{\sqrt{\pi}}{|x|} \quad \text{ if } d = 2,\\
          \int_{|x|^2}^\infty t^{-d/2} dt = \frac{2}{d-2}\frac{1}{|x|^{d-2}}
           \quad \text{ if } d > 2 .
        \end{aligned}\right.
    \end{split}
  \end{equation}
  Therefore, (\ref{eq_uaUp}) follows from (\ref{eq_uaUp2})-(\ref{eq_uaUp4}). Finally, (\ref{eq_graduaUp}) follows from (\ref{equ_ghk}) and (\ref{eq_uaUp}).
\end{proof}

For $a > 0$ and $\lambda > 0$, define the resolvent operator $U^a_\lambda$ by
\begin{equation*}
  U^a_\lambda g(x) = \int_{\mathbb{R}^d} u^a_\lambda(x-y) g(y) dy = \int_{\mathbb{R}^d} u^a_\lambda(y) g(x-y) dy, \quad g\in C_b(\mathbb{R}^d), x\in \mathbb{R}^d.
\end{equation*}

\medskip

 Let $C_\infty^\infty (\mathbb{R}^d)$ be the collection of the smooth functions on $\mathbb{R}^d$ that together with their partial derivatives of any order vanish at infinity.

\begin{lem}\label{thm_UaBound}
  For every $a > 0$ and $\lambda \ge 1$, $U^a_\lambda$ and $\nabla U^a_\lambda$ are bounded operators on $C_\infty(\mathbb{R}^d)$. Moreover, $U^a_\lambda f \in C_\infty^\infty(\mathbb{R}^d)$ for every $f\in C_\infty^\infty(\mathbb{R}^d)$.
\end{lem}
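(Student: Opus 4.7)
The plan is to exploit two elementary facts: that the resolvent kernel $u^a_\lambda$ is integrable, and that the bounds in Lemma \ref{thm_uaUp} force $|\nabla u^a_\lambda|$ to be integrable as well, at which point standard convolution theory takes over.

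First I would observe that since $\int_{\mathbb{R}^d} p^a(t,x) dx = 1$ for every $t>0$, Fubini gives $\|u^a_\lambda\|_{L^1} = 1/\lambda$, and therefore $\|U^a_\lambda g\|_\infty \le \lambda^{-1} \|g\|_\infty$ for every bounded $g$. The decay $U^a_\lambda g(x)\to 0$ as $|x|\to \infty$ for $g\in C_\infty(\mathbb{R}^d)$ is the standard argument that the convolution of an $L^1$ function with a function vanishing at infinity again vanishes at infinity: split $U^a_\lambda g(x) = \int_{|y|\le R} u^a_\lambda(y) g(x-y) dy + \int_{|y|>R} \cdots$, and make the tail small using $u^a_\lambda \in L^1$ and the bulk small using the uniform continuity/decay of $g$.

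Second, I would show that $\nabla u^a_\lambda \in L^1(\mathbb{R}^d)$ with $\|\nabla u^a_\lambda\|_{L^1}\le C (1\vee a^\alpha)\lambda^{-1/2}$. Using (\ref{eq_graduaUp}), split the integration at the crossover radius $|x|=\lambda^{-1/2}$, where the two terms in the minimum balance. For $|x|\le \lambda^{-1/2}$ the bound $|x|^{-(d-1)}$ integrates to $O(\lambda^{-1/2})$, while for $|x|>\lambda^{-1/2}$ the bound $\lambda^{-(\alpha+2)/2}|x|^{-(d+1+\alpha)}$ integrates to $O(\lambda^{-(\alpha+2)/2}\lambda^{(1+\alpha)/2})=O(\lambda^{-1/2})$. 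With an integrable gradient, differentiation under the integral sign is justified by dominated convergence, so
\[
  \nabla U^a_\lambda g(x) = \int_{\mathbb{R}^d} \nabla u^a_\lambda(x-y) g(y) dy,
\]
and the same convolution-at-infinity argument shows this belongs to $C_\infty(\mathbb{R}^d)$ with norm bounded by $C(1\vee a^\alpha)\lambda^{-1/2}\|g\|_\infty$.

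Finally, for $f\in C_\infty^\infty(\mathbb{R}^d)$ I would iterate the identity $\partial^\beta U^a_\lambda f = U^a_\lambda (\partial^\beta f)$ for every multi-index $\beta$. Writing $U^a_\lambda f(x)=\int u^a_\lambda(z) f(x-z)dz$ and using that $u^a_\lambda \in L^1$ while every $\partial^\beta f$ is bounded and continuous, differentiation under the integral is legitimate and commutes with the convolution. Since $\partial^\beta f\in C_\infty^\infty \subset C_\infty$, the first step of the lemma gives $\partial^\beta U^a_\lambda f=U^a_\lambda(\partial^\beta f)\in C_\infty(\mathbb{R}^d)$ for every $\beta$, hence $U^a_\lambda f\in C_\infty^\infty(\mathbb{R}^d)$.

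The only mildly delicate point is the $L^1$-bound on $\nabla u^a_\lambda$; everything else is standard convolution calculus. The $L^1$ estimate, however, is an immediate book-keeping consequence of Lemma \ref{thm_uaUp}, so I do not anticipate any genuine obstruction.
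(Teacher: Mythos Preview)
Your proposal is correct and follows essentially the same route as the paper: both argue that $u^a_\lambda$ and $\nabla u^a_\lambda$ are in $L^1(\mathbb{R}^d)$ (the latter via the pointwise bound \eqref{eq_graduaUp}), invoke dominated convergence to differentiate under the integral, and then pass higher derivatives onto $f$ via the convolution structure. You supply sharper quantitative bookkeeping (the exact $\lambda^{-1}$ and $\lambda^{-1/2}$ scalings for the $L^1$ norms), but the underlying argument is the same.
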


\begin{proof}
  By (\ref{eq_graduaUp}), we have for every $a > 0$, $\lambda\ge 1$, $f\in C_\infty(\mathbb{R}^d)$ and $x \in \mathbb{R}^d$,
  \begin{equation*}
    \int_{\mathbb{R}^d} \left|\nabla u^a_\lambda(y)\right| |f(x-y)|dy \le C_{\CTulambda}(1\vee a^\alpha)\|f\|_\infty\int_{\mathbb{R}^d} \frac{1}{|y|^{d-1}}\wedge \frac{\lambda^{-\frac{\alpha+2}{2}}}{|y|^{d+1+\alpha}}dy < \infty.
  \end{equation*}
  Combining this with the fact that $u^a_\lambda$ in continuously differentiable off the origin and the dominated convergence theorem, we have
  \begin{equation*}
    \nabla U^a_\lambda f(x) = \int_{\mathbb{R}^d} \nabla u^a_\lambda(x-y) f(y) dy = \int_{\mathbb{R}^d} \nabla u^a_\lambda(y) f(x-y) dy.
  \end{equation*}
  Since both $u^a_\lambda$ and $\nabla u^a_\lambda$ are integrable over $\mathbb{R}^d$ and $f(x-y)$ converges to $0$ as $|x|\rightarrow \infty$, we have that both $U^a_\lambda f$ and $\nabla U^a_\lambda f$ are in $C_\infty(\mathbb{R}^d)$ and
  \begin{equation*}
    \left\|U^a_\lambda f\right\|_\infty \le C_{\CTulambda}(1\vee a^\alpha)\|f\|_\infty, \text{ and } \left\|\nabla U^a_\lambda f\right\|_\infty \le C_{\CTulambda}(1\vee a^\alpha)\|f\|_\infty,
  \end{equation*}
  where $C_{\CTulambda}$ is the constant from Lemma \ref{thm_uaUp}.
  Similarly, by the dominated convergence theorem, for $f \in C_\infty^\infty(\mathbb{R}^d)$, we have
  \begin{equation*}
    \partial^{k_1}_{x_1}\cdots \partial^{k_d}_{x_d} U^a_\lambda f(x) = \int_{\mathbb{R}^d} u^a_\lambda(y) \partial^{k_1}_{x_1}\cdots \partial^{k_d}_{x_d} f(x-y) dy,
  \end{equation*}
  which shows that $U^a_\lambda f\in C_\infty^\infty(\mathbb{R}^d)$.
\end{proof}

\begin{lem}\label{thm_nablaUa}
  Suppose that $M > 0$ and $b \in \mathbb{K}_{d,1}$. There is a constant $\lambda_0 = \lambda_0(d,\alpha,M,b)\ge 1$ with the dependence on $b$ only via the rate at which $M_b(r)$ goes to zero such that for every $a \in (0,M]$, $\lambda \ge \lambda_0$ and $f \in C_\infty(\mathbb{R}^d)$,
  \begin{equation*}
    \left\|\nabla U^a_\lambda(bf)\right\|_\infty \le \frac{1}{2} \|f\|_\infty.
  \end{equation*}
\end{lem}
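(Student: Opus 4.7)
The plan is to pass the gradient onto $u^a_\lambda$, bound $|\nabla u^a_\lambda|$ pointwise via Lemma \ref{thm_uaUp}, and then estimate the resulting convolution against $|b|$ using the Kato-class machinery from Lemma \ref{thm_HMb}.

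First I would write, using the representation from the proof of Lemma \ref{thm_UaBound},
\[
 |\nabla U^a_\lambda (bf)(x)| \le \|f\|_\infty \int_{\mathbb{R}^d} |\nabla u^a_\lambda (x-y)|\, |b(y)|\, dy.
\]
By \eqref{eq_graduaUp} and the assumption $a\in (0,M]$,
\[
 |\nabla u^a_\lambda (z)| \le C_{\CTulambda}(1\vee M^\alpha)\left(\frac{1}{|z|^{d-1}} \wedge \frac{\lambda^{-(\alpha+2)/2}}{|z|^{d+1+\alpha}}\right).
\]
Setting $\beta := (\alpha+2)/2$ and $r:=\lambda^{-1}$, one checks that $d-1+2\beta = d+1+\alpha$ and $r^\beta = \lambda^{-(\alpha+2)/2}$, so the right-hand side is exactly $C_{\CTulambda}(1\vee M^\alpha)\, H^{(\alpha+2)/2}(\lambda^{-1},z)$.

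Combining these,
\[
 |\nabla U^a_\lambda (bf)(x)| \le C_{\CTulambda}(1\vee M^\alpha)\, \|f\|_\infty\, H_{|b|}^{(\alpha+2)/2}(\lambda^{-1},x).
\]
Since $\beta = (\alpha+2)/2 > 1/2$, Lemma \ref{thm_HMb} applies and yields
\[
 H_{|b|}^{(\alpha+2)/2}(\lambda^{-1},x) \le C_{\CTHM}\, M_b(\lambda^{-1/2}) \quad \text{for all } x\in\mathbb{R}^d.
\]
Hence
\[
 \|\nabla U^a_\lambda (bf)\|_\infty \le C_{\CTulambda}(1\vee M^\alpha)\, C_{\CTHM}\, M_b(\lambda^{-1/2})\, \|f\|_\infty.
\]

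Because $b\in\mathbb{K}_{d,1}$, we have $M_b(r)\downarrow 0$ as $r\downarrow 0$, so we may choose $\lambda_0=\lambda_0(d,\alpha,M,b)\ge 1$ large enough that
\[
 C_{\CTulambda}(1\vee M^\alpha)\, C_{\CTHM}\, M_b(\lambda_0^{-1/2}) \le \tfrac{1}{2},
\]
and then monotonicity of $r\mapsto M_b(r)$ gives the same bound for all $\lambda\ge\lambda_0$, proving the claim. This $\lambda_0$ clearly depends on $b$ only through the rate at which $M_b(r)\to 0$. There is no serious obstacle; the only thing to check carefully is the identification of the pointwise bound on $|\nabla u^a_\lambda|$ with the kernel $H^{(\alpha+2)/2}(\lambda^{-1},\cdot)$, which is a direct algebraic matching of exponents.
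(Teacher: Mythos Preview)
Your proposal is correct and follows essentially the same approach as the paper: pass the gradient onto $u^a_\lambda$, use the pointwise bound \eqref{eq_graduaUp}, recognize the resulting kernel as $H^{(\alpha+2)/2}(\lambda^{-1},\cdot)$, and apply Lemma~\ref{thm_HMb} with $\beta=(\alpha+2)/2$ to control the integral against $|b|$ by $M_b(\lambda^{-1/2})$. The paper's proof is identical in structure, with your explicit constant $C_{\CTHM}$ playing the role of the paper's generic $c_1$.
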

\begin{proof}
  By (\ref{equ_HMb})(with $\beta = \frac{\alpha+2}{2}$) and (\ref{eq_graduaUp}), we have for $a \in (0,M]$, $\lambda \ge \lambda_0$ and $f \in C_\infty(\mathbb{R}^d)$,
  \begin{align*}
    \left\|\nabla U^a_\lambda(bf)\right\|_\infty &\le C_{\CTulambda} (1\vee a^\alpha) \sup_{x\in \mathbb{R}^d} \int_{\mathbb{R}^d}\bigg(\frac{1}{|x-y|^{d-1}}\wedge \frac{\lambda^{-\frac{\alpha+2}{2}}}{|x-y|^{d+1+\alpha}}\bigg) |b(y)||f(y)|dy\\
    &\le C_{\CTulambda}c_1\|f\|_\infty(1\vee M^\alpha) M_b(\lambda^{-1/2}).
  \end{align*}
  Since $b \in \mathbb{K}_{d,1}$, we can choose $\lambda_0\ge 1$ such that $C_{\CTulambda}c_1(1\vee M^\alpha) M_b(\lambda^{-1/2}) \le 1/2$ for every $\lambda > \lambda_0$. This completes the proof.
\end{proof}

\const{\CTbigUlambda}
By \eqref{equ_pabAbsUp} for $\lambda > C_{\CTpabAbsUexp}C_{\CTpaUexp}/(2C_{\CTpaLexp})$,
\begin{equation*}
  \begin{split}
    \mathbb{E}_x\left[\int_0^\infty e^{-\lambda t}|b(X_t)|dt\right] &\le C_{\CTpabAbsUcp}C_{\CTpaUexp}/(2C_{\CTpaLexp}) \int_{\mathbb{R}^d}\int_0^\infty e^{-(\lambda-C_{\CTpabAbsUexp}C_{\CTpaUexp}/(2C_{\CTpaLexp}))t}p^a(t,x,y)dt |b(y)|dy\\
    &= C_{\CTpabAbsUcp}C_{\CTpaUexp}/(2C_{\CTpaLexp})\int_{\mathbb{R}^d}u^a_{\lambda-C_{\CTpabAbsUexp}C_{\CTpaUexp}/(2C_{\CTpaLexp})}(x-y) |b(y)| dy.
  \end{split}
\end{equation*}
Similar to Lemma \ref{thm_nablaUa}, by (\ref{eq_uaUp}), there is a constant $C_{\CTbigUlambda} > C_{\CTpabAbsUexp}C_{\CTpaUexp}/(2C_{\CTpaLexp})\vee 1$ so
 that for every $a\in (0,M]$ and $\lambda > C_{\CTbigUlambda}$,
\begin{equation}\label{eq_UabFinite}
  \sup_{x\in \mathbb{R}^d} U^a_\lambda|b|(x) = \sup_{x\in \mathbb{R}^d}\mathbb{E}_x\left[\int_0^\infty e^{-\lambda t}|b(X_t)|dt\right] < \infty.
\end{equation}
By increasing the value of $\lambda_0$ in Lemma \ref{thm_nablaUa} if needed, we may and do assume that $\lambda_0 \ge C_{\CTbigUlambda}$.

\begin{thm}[Uniqueness]\label{thm_marUni}
  For each $x\in \mathbb{R}^d$ and $a\in (0,M]$, $\mathbb{P}_x$ is the unique solution to the martingale problem for $(\mathcal{L}^{a,b},C_c^\infty(\mathbb{R}^d))$ with initial value $x$.
\end{thm}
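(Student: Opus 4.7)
The strategy is to show that for every $x\in\R^d$ and every solution $\bQ$ of the martingale problem with $\bQ(X_0=x)=1$, the one-dimensional Laplace transforms $\E_\bQ[\int_0^\infty e^{-\lambda t}g(X_t)\,dt]$ coincide with those of $\mathbb{P}_x$ for every $g\in C_c^\infty(\R^d)$ and every sufficiently large $\lambda$. By the standard Stroock--Varadhan conditioning argument (regular conditional distributions of $\bQ$ given $X_t$ are again solutions of the martingale problem, so uniqueness of one-dimensional distributions for every starting point propagates to uniqueness of all finite-dimensional distributions), this forces $\bQ=\mathbb{P}_x$, which gives the theorem.

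The key resolvent identity is the following: for $f\in C_c^\infty(\R^d)$ and $\lambda>0$, the $\bQ$-martingale property of $M^f_t$ combined with Fubini yields
\begin{equation*}
  f(x)=\E_\bQ\!\left[\int_0^\infty e^{-\lambda t}\bigl(\lambda f-\LL^{a,b}f\bigr)(X_t)\,dt\right].\qquad(\ast)
\end{equation*}
If for each $g\in C_c^\infty(\R^d)$ I can produce an admissible test function $f$ with $(\lambda-\LL^{a,b})f=g$ for which $(\ast)$ still holds, then $\E_\bQ[\int_0^\infty e^{-\lambda t}g(X_t)\,dt]=f(x)$ depends only on $g,x,\lambda$, which is what is needed. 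I would solve $(\lambda-\LL^{a,b})f=g$ by a perturbation series built on the explicit resolvent $U^a_\lambda$ of $Z^a$. Fix $\lambda\ge\lambda_0$ from Lemma \ref{thm_nablaUa}, set $w_0:=U^a_\lambda g$, $w_{k+1}:=U^a_\lambda(b\cdot\nabla w_k)$, and $f:=\sum_{k\ge 0}w_k$. The contraction bound in Lemma \ref{thm_nablaUa} gives $\|\nabla w_{k+1}\|_\infty\le\tfrac12\|\nabla w_k\|_\infty$, whence $\|\nabla w_k\|_\infty\le 2^{-k}\|\nabla U^a_\lambda g\|_\infty$; combined with $\sup_x U^a_\lambda|b|(x)<\infty$ from \eqref{eq_UabFinite} this also controls $\|w_k\|_\infty$, so the series converges in $C_\infty(\R^d)$ with $\nabla f\in C_\infty(\R^d;\R^d)$. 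A telescoping application of $(\lambda-\LL^a)U^a_\lambda=I$ then yields $(\lambda-\LL^{a,b})f=g$.

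The main technical obstacle is to legalize $(\ast)$ for this non-compactly-supported $f$. I would approximate $f$ by $f_n:=(\chi_n f)\ast\varphi_n\in C_c^\infty(\R^d)$, with smooth cutoffs $\chi_n\uparrow 1$ and mollifiers $\varphi_n$, apply $(\ast)$ to each $f_n$, and pass to the limit. The delicate point is the drift term $b\cdot\nabla f_n$ in $\LL^{a,b}f_n$: since $b\in\K_{d,1}$ is unbounded, one needs a Krylov-type estimate under the a priori unknown measure $\bQ$ of the form
\begin{equation*}
  \E_\bQ\!\left[\int_0^\infty e^{-\lambda t}|h|(X_t)\,dt\right]\le C\sup_x U^a_\lambda|h|(x)\quad\text{for }h\in\K_{d,1},
\end{equation*}
so that the drift tail can be dominated uniformly in $n$. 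This estimate is obtained by first applying $(\ast)$ to non-negative $C_c^\infty$ approximants of $U^a_\lambda|h|$ and running a Khasminskii-type iteration; the jump-kernel tails in $\Delta^{\alpha/2}f_n$ are controlled by the polynomial decay of $u^a_\lambda$ at infinity from Lemma \ref{thm_uaUp}, inherited by each $w_k$ and hence by $f$. Once the limiting identity $(\ast)$ holds for $f$, uniqueness of the one-dimensional marginals --- and then of the full martingale-problem solution --- follows.
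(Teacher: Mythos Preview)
Your proposal is correct and follows essentially the same route as the paper. The paper's own proof simply invokes \cite[Theorem 2.3]{ChenWang.2013a}, pointing to Lemmas \ref{thm_uaUp}--\ref{thm_nablaUa} and \eqref{eq_UabFinite} together with the It\^o formula for $e^{-\lambda t}f(X_t)$; these are precisely the ingredients you assemble (the Neumann series $f=\sum_k w_k$ with $w_{k+1}=U^a_\lambda(b\cdot\nabla w_k)$, the contraction from Lemma \ref{thm_nablaUa}, the Krylov-type control via \eqref{eq_UabFinite}, and the resolvent identity $(\ast)$), so your write-up is effectively an unpacking of the cited argument.
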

\begin{proof}
  Recall that $X_t$ be the coordinate map on $\mathbb{D}([0,\infty),\mathbb{R}^d)$. Using
  Lemmas \ref{thm_uaUp}-\ref{thm_nablaUa}
  and \eqref{eq_UabFinite}, we can finish the proof by repeating the arguments in the proof of \cite[Theorem 2.3]{ChenWang.2013a} except using the following It\^{o}'s formula in place of that in  Step (ii) of \cite[Theorem 2.3]{ChenWang.2013a}:
  \begin{eqnarray*}
      e^{-\lambda t}f(X_t) &=& f(X_0) + \int_0^t e^{-\lambda s} dM_s^f  + \int_0^t e^{-\lambda s}\left(\Delta f(X_s)+ a^\alpha \Delta^{\alpha/2} f(X_s) + b(X_s)\cdot \nabla f(X_s)\right) ds \\
      && - \lambda \int_0^t e^{-\lambda s}f(X_s) ds.
  \end{eqnarray*}
\end{proof}

\renewcommand{\proofname}{\bf{Proof of Theorem \ref{T:1.4}}}
\begin{proof}
  Theorem \ref{thm_marUni} implies that the martingale problem for $(\mathcal{L}^{a,b}, C^\infty_c(\mathbb{R}^d))$ is well-posed. The rest follows from Theorem \ref{T:main}.
\end{proof}

\renewcommand{\proofname}{Proof}

The following theorem establishes the existence of the weak solution of SDE \eqref{e:1.1}.

\begin{thm}[Existence]\label{thm_solutionExist}
  For every $a > 0$, there is a process $Z^a$ defined on $\Omega$ so that all its paths are right continuous and admit left limits, and
  \begin{equation*}
    X^{a,b}_t = x + Z^a_t + \int_0^t b(X^{a,b}_s) ds, \quad t\ge 0.
  \end{equation*}
\end{thm}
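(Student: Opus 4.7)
The natural candidate is
$$Z^a_t := X^{a,b}_t - x - \int_0^t b(X^{a,b}_s)\,ds.$$
The time-integral is almost surely finite for every $t>0$: by Theorem \ref{thm_pababsup} and Lemma \ref{thm_mar_P},
$$\mathbb{E}_x\left[\int_0^t |b(X^{a,b}_s)|\,ds\right] = \int_0^t P^{a,b}_s|b|(x)\,ds,$$
which is finite on $[0,t_*]$ and extends to all $t>0$ by iterating in blocks of length $t_*$ via the Markov property. The process $X^{a,b}$ is c\`adl\`ag as the Feller process of Theorem \ref{T:1.4} and $t\mapsto \int_0^t b(X^{a,b}_s)\,ds$ is continuous, so $Z^a$ is c\`adl\`ag.

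The plan is to identify $Z^a$ as a L\'evy process with characteristic exponent $\psi^a(\xi):=|\xi|^2+a^\alpha|\xi|^\alpha$. To this end I would show that for every $\xi\in\mathbb{R}^d$,
$$N^\xi_t := e^{i\xi\cdot Z^a_t} - 1 + \psi^a(\xi)\int_0^t e^{i\xi\cdot Z^a_s}\,ds$$
is a $\mathbb{P}_x$-martingale. Since $e^{i\xi\cdot y}\notin C_c^\infty(\mathbb{R}^d)$, the argument goes by localization: take $\chi_R\in C_c^\infty(\mathbb{R}^d)$ with $\chi_R\equiv 1$ on $B(0,R)$ and apply Theorem \ref{T:1.4} to $f_{R,\xi}(y):=e^{i\xi\cdot y}\chi_R(y)$, stopped at $\tau_R := \inf\{t\ge 0 : |X^{a,b}_t|\ge R\}$. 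Writing the bounded-variation factor $g(t) := e^{-i\xi\cdot\int_0^t b(X^{a,b}_s)ds}$ and using Stieltjes integration by parts against $e^{i\xi\cdot X^{a,b}_t}$, the key cancellation is inside $\mathcal{L}^{a,b}$ acting on $e^{i\xi\cdot y}$: the $b\cdot\nabla$ contribution produces $i(b(y)\cdot \xi)e^{i\xi\cdot y}$, which is exactly annihilated by $g'(t)/g(t)=-i\xi\cdot b(X^{a,b}_t)$, leaving only the L\'evy symbol $-\psi^a(\xi)$ from $\Delta+a^\alpha\Delta^{\alpha/2}$. Sending $R\to \infty$ (using conservativeness $\tau_R\uparrow\infty$ a.s.\ from Theorem \ref{T:main}(iii), together with bounded convergence since $|e^{i\xi\cdot Z^a_s}|=1$) yields the claimed martingale property of $N^\xi$.

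From the martingale property of $N^\xi_t$ for every $\xi$, a standard argument (condition on $\mathcal{F}_t$ and solve the resulting linear ODE in $s$) gives $\mathbb{E}_x[e^{i\xi\cdot(Z^a_{t+s}-Z^a_t)}\mid \mathcal{F}_t] = e^{-s\psi^a(\xi)}$, which identifies $Z^a$ as a L\'evy process equal in distribution to $B+aY$. I expect the main technical obstacle to be the localization bookkeeping: controlling uniformly in $R$ the boundary terms produced by the cutoff $\chi_R$, in particular the non-local contribution $a^\alpha\Delta^{\alpha/2}(e^{i\xi\cdot\,\cdot}\chi_R)(y)-(a^\alpha\Delta^{\alpha/2}e^{i\xi\cdot\,\cdot})(y)\chi_R(y)$ for $y$ near $\partial B(0,R)$, and the $\nabla\chi_R\cdot\nabla e^{i\xi\cdot y}$ terms from the Laplacian. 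For these, the sharp upper bound in Theorem \ref{T:1.3}, the explicit L\'evy system of Theorem \ref{thm_levysystem}, and the $\mathbb{K}_{d,1}$-regularity of $b$ provide the integrability needed to push the dominated convergence through.
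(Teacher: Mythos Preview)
Your approach is correct but follows a genuinely different route from the paper. The paper (following \cite{ChenWang.2013a}) proceeds by semimartingale decomposition: from the martingale problem for $(\mathcal{L}^{a,b},C_c^\infty(\R^d))$ one reads off, via It\^o's formula applied to test functions $f\in C_c^\infty(\R^d)$, that the bounded-variation part of $X^{a,b}$ is $A_t=\int_0^t b(X^{a,b}_s)\,ds$ and the continuous martingale part $M$ has $\langle M^i,M^j\rangle_t=2\delta_{ij}t$; L\'evy's characterization then makes $M$ a Brownian motion, and the jump part is identified as $a$ times a symmetric $\alpha$-stable process through the L\'evy system already established in Theorem~\ref{thm_levysystem}. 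Assembling these three pieces gives $Z^a$ pathwise.

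You instead define $Z^a$ as the difference and identify its law in one stroke via the exponential martingale $N^\xi$, using the cancellation between $b\cdot\nabla$ acting on $e^{i\xi\cdot y}$ and the derivative of the bounded-variation factor $g(t)$. This is clean and avoids invoking L\'evy's characterization and the L\'evy system separately, at the price of the localization step you flag (the nonlocal error $\Delta^{\alpha/2}(e^{i\xi\cdot}\chi_R)-\chi_R\Delta^{\alpha/2}e^{i\xi\cdot}$ is not only supported near $\partial B(0,R)$ but is nonzero throughout $B(0,R)$; however for $|y|\le R/2$ it is $O(R^{-\alpha})$, so stopping at $\tau_{R/2}$ rather than $\tau_R$ makes the limit straightforward). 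The paper's decomposition approach has the advantage of producing the Brownian and stable components of $Z^a$ explicitly on the given probability space, which is useful for the subsequent weak-uniqueness statement in Theorem~\ref{T:1.5}; your Fourier approach gives only the law of $Z^a$ directly, though that suffices for the theorem as stated.
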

\begin{proof}
  The proof is almost the same to that of \cite[Theorem 3.1]{ChenWang.2013a}, except that we use the following arguments instead of those at the beginning of Page 13 in \cite{ChenWang.2013a}: for any $f \in C^\infty_c(\mathbb{R}^d)$,
  \begin{eqnarray*}
  &&\int_0^t b(X^{a,b}_s)\nabla f(X^{a,b}_s) ds + \int_0^t\Delta f(X^{a,b}_s)ds  \\
   &=& \int_0^t \nabla f(X^{a,b}_s) dA_s
    + \frac{1}{2}\sum_{i,j=1}^d \int_0^t\frac{\partial^2f}{\partial x_i\partial x_j}(X^{a,b}_s)d \<M^i,M^j \>_s,
  \end{eqnarray*}
  which implies that
  \begin{equation*}
    A_t = \int_t b(X^{a,b}_t) ds \quad \text{ and } \quad \<M^i,M^j\>_t = \delta_{ij} t .
   \end{equation*}
   Here $\delta_{ij}=1$ if $i=j$ and $\delta_{ij}=0$ if $i\not=j$.
\end{proof}

\renewcommand{\proofname}{\bf{Proof of Theorem \ref{T:1.5}}}
\begin{proof}
  The existence of weak solution to SDE \eqref{e:1.1} follows from Lemma \ref{thm_solutionExist}. Every weak solution to \eqref{e:1.1} solves the martingale problem for $(\mathcal{L}^{a,b},C^\infty_c(\mathbb{R}^d))$ by It\^{o}'s formula. Then, the rest follows from Theorem \ref{T:1.4}.
\end{proof}
\renewcommand{\proofname}{Proof}

\bigskip

{{\bf Acknowledgement.} Part of this work was done while the second author was visiting the Department of Mathematics at the University of Washington.
The authors thank Longmin Wang and the referee for helpful comments.}

\bigskip

{{\bf Zhen-Qing Chen}

 Department of Mathematics, University of Washington, Seattle, WA 98195, USA

 E-mail: zqchen@uw.edu}

\

{{\bf Eryan Hu}

 School of Mathematics and Statistics, Beijing Institute of Technology, Beijing 100081, China

 E-mail: eryanhu@gmail.com}
\end{document}